\documentclass[11pt]{amsart}
\usepackage{amsmath,amsthm,amssymb,amsfonts,enumerate,color,hyperref,esint, graphicx, pgf,tikz}
\usepackage[normalem]{ulem}
\usepackage[latin1]{inputenc}
\usepackage{appendix,verbatim}

\newcommand{\R}{\mathbb{R}}
\newcommand{\Rn}{\mathbb{R}^n}

\newcommand{\N}{\mathbb{N}}

\newcommand{\M}{\mathcal{M}}

\renewcommand{\S}{\mathcal{S}}
\newcommand{\vep}{\varepsilon}
\newcommand{\tr}{\operatorname{tr}}

\newtheorem{thm}{Theorem}[section]
\newtheorem{prop}[thm]{Proposition}
\newtheorem{cor}[thm]{Corollary}
\newtheorem{lem}[thm]{Lemma}
\theoremstyle{definition}
\newtheorem{defn}[thm]{Definition}
\newtheorem{rem}[thm]{Remark}

\numberwithin{equation}{section}

\title{A fully nonlinear parabolic segregation problem}

\author[E.~Casey]{Emily Casey}
\address{Emily Casey. School of Mathematics, University of Minnesota \newline
127 Vincent Hall 206 Church St. SE Minneapolis, MN 55455, USA}
    \email{\tt ecasey@umn.edu}
    
\author[C.~Mihaila]{Cornelia Mihaila}
\address{Cornelia Mihaila. Saint Michael's College\newline
One Winooski Park, Colchester, VT, 05439}
    \email{\tt cmihaila@smcvt.edu}
    
\author[M.~Soria-Carro]{Mar\'ia Soria-Carro}
\address{Mar\'ia Soria-Carro. Departamento de Matem\'aticas, Universidad Aut\'onoma de Madrid\newline
Campus de Cantoblanco, 28049 Madrid, Spain}
    \email{\tt maria.soriacarro@uam.es}
    
\author[C.~Torres-Latorre]{Clara Torres-Latorre}
\address{Clara Torres-Latorre. Instituto de Ciencias Matem\'aticas\newline Consejo Superior de Investigaciones Cient\'ificas\newline
    C/ Nicol\'as Cabrera, 13-15, 28049 Madrid, Spain.}
\email{\tt clara.torres@icmat.es}

\keywords{Free boundary problems, fully nonlinear parabolic equations, segregation problems, viscosity solutions, regularity theory.}

\subjclass[2020]{Primary: 35R35, 35K65. Secondary: 35K55, 35K57.}

\begin{document}

\maketitle

\begin{abstract}
We study a two-phase parabolic free boundary problem arising as the segregation limit of a competition-diffusion system with strong interactions and fully nonlinear diffusion. The positive and negative phases are governed by different equations and are coupled through global viscosity inequalities. We construct viscosity solutions that are Lipschitz in space and $1/2$-H\"older in time, and establish differentiability in space across the free boundary, including at singular points. At regular points, we prove that the solution has a parabolic $C^{1,\alpha}$ asymptotic expansion, and deduce that the free boundary is locally a $C^{1,\alpha}$ hypersurface. The proof relies on a parabolic Alt--Caffarelli--Friedman monotonicity formula to classify blow-up limits, followed by an improvement of flatness argument.
\end{abstract}

\section{Introduction} 

Segregation phenomena occur when strong interactions prevent different populations or phases from coexisting in the same region. As the strength of the interaction increases, the different components tend to separate and occupy disjoint regions. The interface between these regions is not known in advance and is determined as the populations segregate. Hence, when modeling such populations with partial differential equations (PDEs), the limiting configuration is naturally described by a free boundary problem. These problems arise in several areas, such as population dynamics, materials science, and quantum systems. A central question in this area of PDE theory is to understand how the equations governing the individual phases determine the geometry and regularity of the interface separating them.

In this paper, we consider the following fully nonlinear parabolic two-phase problem
\begin{equation} \label{eq:FBP}
\begin{cases}
\partial_t u- F^-(D^2 u)  = 0 & \text{ in } \{u>0\},\\
\partial_t u-F^+(D^2 u)= 0 & \text{ in } \{u<0\},\\
\partial_t u- F^-(D^2 u)  \geq 0 & \text{ in } \Omega\times (0,T],\\
 \partial_t u - F^+(D^2 u)   \leq 0 & \text{ in } \Omega \times (0,T],\\
 u(x,0) = f(x) & \text{ for } x\in \Omega,\\
 u(x,t) = g(x,t) & \text{ for } (x,t)\in \partial \Omega \times (0, T],
\end{cases}
\end{equation}
where $\Omega\subset \Rn$ is a smooth bounded domain and $T>0$. 
The functions $f:\overline \Omega \to \R$ and $g: \partial\Omega \times [0,T] \to \R$ are continuous and satisfy the compatibility condition $f=g$ on $\partial \Omega\times\{0\}$.
The operators $F^+$ and $F^-$ are fully nonlinear operators, whose properties will be discussed in Section~\ref{sec:operators}.
Our main object of interest is the \textit{free boundary}, which we define as the set
\begin{equation} \label{eq:Gamma}
\Gamma:=\partial \{u>0\} \cap (\Omega\times (0,T]).
\end{equation}
We point out that when $F^+=F^-=F$, the problem reduces to a single equation
$$\partial_t u - F(D^2 u)=0,$$ 
for which regularity results are already available \cite{WI92,WII92}. In this case, the free boundary $\Gamma$ is the nodal set $\{u=0\}$ of the solution $u$. By contrast, a key feature of \eqref{eq:FBP} is that the positive and negative phases evolve according to \textit{different} fully nonlinear parabolic equations. 
The two phases are coupled through the global viscosity inequalities, which implicitly determine the behavior of $u$ across the free boundary $\Gamma$. In fact, we will see that the spatial gradient $\nabla u(x_0,t_0)$ exists at every point $(x_0,t_0)\in \Gamma$.

Problem~\eqref{eq:FBP} arises as the limiting configuration of a competition-diffusion system of two species with strong interactions and nonlinear diffusion.
The time-independent problem was studied by Caffarelli, Patrizi, Quitalo, and Torres in \cite{CPQT19} (see also \cite{Q13}), in which they construct a Lipschitz viscosity solution and prove that the \textit{regular} set of the free boundary is locally a $C^{1,\alpha}$ hypersurface.
Motivated by their work, our goal is to extend these results to the parabolic framework, where the free boundary $\Gamma$ is now a space-time interface, see \eqref{eq:Gamma}. 
This extension presents new challenges, both in obtaining uniform space-time regularity estimates and in understanding the behavior of the evolving interface.

\subsection{History and motivation}
This particular segregation problem has roots in mathematical ecology and the study of population dynamics of competing species inhabiting the same region. One such model for this phenomenon is the Gause--Lotka--Volterra equations, introduced in the 1920s, for $N$ species, with densities $u_i \geq 0$, $i=1,\dots,N$:
\begin{equation}\label{e: GLV}
    \partial_t u_i
\quad =\quad
\underbrace{d_i \Delta u_i}_{\text{diffusion}}
\quad + \quad \underbrace{f(u_i)}_{\text{reaction}}
\quad - \quad
\underbrace{\sum_{j\neq i} {b_{ij}} u_i u_j}_{\text{competition}},
\end{equation}
where $d_i$ is the rate of diffusion and $b_{ij}$ is the rate of competition between species $i$ and $j$. There has been a lot of activity in the study of 
\eqref{e: GLV}, where the relationship between the parameters plays an important role in the behavior of the species (see also the Fisher equation, e.g., \cite{CKL09}).  
We discuss some of these works in more detail later in this section. 

Most relevant to the study of \eqref{eq:FBP} is the reduction of \eqref{e: GLV} to a competition-diffusion model, namely where $f(u_i)\equiv 0$ for $i=1,\hdots, N$. 
We are interested in the strong competition regime, i.e., when $b_{ij}\to \infty$.
A model of this type appeared in \cite{CKL09}, although the study of competition-diffusion models with strong interactions and their asymptotic limits began to develop in the 1990s.

To illustrate why interfaces appear naturally in this setting,  we consider a toy model of two competing species. For $\varepsilon>0$, let $u_1^\varepsilon$ and $u_2^\varepsilon$ denote their densities. Assume that both populations live in the same region $\Omega\subset \Rn$, and their densities satisfy the system
\begin{equation*}
\begin{cases}
\Delta u_1^\varepsilon= \dfrac{1}{\varepsilon} u_1^\varepsilon u_2^\varepsilon & \text{in }\Omega,\\[4mm]
\Delta u_2^\varepsilon= \dfrac{1}{\varepsilon} u_1^\varepsilon u_2^\varepsilon & \text{in }\Omega,\\[4mm]
u_1^\varepsilon=f_1, \ u_2^\varepsilon=f_2  & \text{on }\partial\Omega,\\
\end{cases}
\end{equation*}
where the boundary data have disjoint support, i.e.,
$$
f_1f_2\equiv 0.
$$
The parameter $\vep$ controls the strength of the competition: when $\varepsilon$ is small, the interaction term becomes large wherever both populations coexist, thus penalizing these regions. Formally, as $\varepsilon\to0$, the densities converge to limiting functions $u_1$ and $u_2$ satisfying
$$
u_1u_2 \equiv 0.
$$
Hence, the two populations occupy disjoint regions separated by an a priori unknown interface.  This limit is known in the literature as the \textit{spatial segregation limit.}

Introducing the signed function
$$
u:=u_1-u_2,
$$
and noting that $u_1$ and $u_2$ are nonnegative and have disjoint supports, one obtains 
$$
u_1=u^+ \quad \text{ and } \quad u_2=u^-,
$$
where $u^+$ and $u^-$ are the positive and negative parts of $u$, respectively.
The set $\{u>0\}$ represents the region occupied by the first population, and the set $\{u<0\}$ is the region occupied by the second. Their common interface becomes the free boundary $\partial\{u>0\}$.

For this two-species model, the limiting function $u=u_1-u_2$ is uniquely determined by the boundary data. Indeed, the competition terms cancel when the two equations are subtracted, thus $\Delta(u_1^\varepsilon-u_2^\varepsilon)=0$. Passing to the limit, we obtain $\Delta u=0$ in $\Omega$. Hence, $u$ is the harmonic function in $\Omega$ with boundary data $f_1-f_2$, and the interface separating the two populations is the nodal set $\{u=0\}\cap\Omega$, whose regularity is nowadays well understood; see the survey~\cite{H07}. 
This reduction to a single harmonic function does not work when three or more populations interact~\cite{CKL09}, or when diffusion is governed by nonlinear operators~\cite{Q13,CPQT19}. In these settings, the limit is described by a system of equations and inequalities, such as~\eqref{eq:FBP}, and the regularity of the interface must be studied via the corresponding free boundary problem.

Extensive research has focused on rigorously establishing the spatial segregation limit and understanding the properties of the resulting segregated configurations in different models. These questions were initially studied for reaction--diffusion models arising in population dynamics by Dancer and Du \cite{DD94,DD95I,DD95II}, who proved existence of positive solutions, including systems with three competing species. Dancer, Hilhorst, Mimura, and Peletier  \cite{DHMP99} later proved that, for two species, strong competition in \eqref{e: GLV} leads to a segregated limit described by a free boundary problem. For subsequent contributions, see \cite{DD03,CDHMN04} and the references therein.

Conti, Terracini, and Verzini \cite{CTV05I,CTV05II,CTV05III} then developed a variational approach for multiple species. In their work, they connected the spatial segregation limit with optimal partition problems and proved existence, Lipschitz regularity, and uniqueness. In two dimensions, they also described the interface as a collection of regular curves that may meet at isolated multiple junctions. We refer to \cite{OV26} for a recent survey on optimal partition problems.

Further regularity results were obtained by Caffarelli and Lin in the variational setting \cite{CL08}, and by Caffarelli, Karakhanyan, and Lin  \cite{CKL09} for a related nonvariational system and its parabolic form. These works established the regularity of the free boundary: near points where only two phases meet, the interface is real-analytic. Singular points may occur when three or more phases meet, but they belong to a smaller closed set of Hausdorff dimension at most $n-2$.

The time-dependent theory is less developed. Dancer, Wang, and Zhang \cite{DWZ11,DWZ12} obtained uniform estimates in the competition parameter and identified differential inequalities satisfied by parabolic segregated limits. Audrito, Serra, and Tilli \cite{AST21} later gave a variational construction of global segregated weak solutions for parabolic reaction--diffusion systems. Recent work also shows that systems with three or more populations may exhibit more complicated time-dependent patterns, including rotating spiral interfaces; see \cite{STVZ25}.

In recent years, there has been growing interest in studying equations with fully nonlinear diffusion as they arise in many applications; see \cite{CIL92,CC95}.  
In the context of segregation problems, Quitalo \cite{Q13} introduced a stationary model for the Gause--Lotka--Volterra model of multiple competing species, governed by the \textit{Pucci extremal operators}, see \eqref{eq:pucci}, and proved existence of viscosity solutions and convergence to a segregated limit. Caffarelli, Patrizi, Quitalo, and Torres \cite{CPQT19} studied the limiting free boundary problem and developed the free boundary regularity. They considered the problem for two species and allowed more general fully nonlinear operators. More precisely,
\begin{equation*} 
\begin{cases}
     F^-(D^2 u)  = 0 & \text{ in } \{u>0\},\\
F^+(D^2 u)= 0 & \text{ in } \{u<0\},\\
F^-(D^2 u)  \leq 0 & \text{ in } \Omega,\\
 F^+(D^2 u)   \geq 0 & \text{ in } \Omega,\\
 u = f & \text{ on }  \partial \Omega,
\end{cases}
\end{equation*}
where $F^+$ and $F^-$ satisfy suitable properties  (see Section~\ref{sec:operators}). This problem is the most relevant for this work, as it is the elliptic counterpart of \eqref{eq:FBP}. 

Other related models with long-range interaction have also been considered: with linear diffusion \cite{CPQ17} and, more recently, with fully nonlinear diffusion \cite{CPT26}. In contrast to the previous works, the limiting populations remain separated by a positive distance.

Two-phase \textit{anisotropic} free boundary problems also arise in other settings; see, for instance, \cite{AM13,AT15,KLS17,ST23,GS25} for equations in divergence form, and \cite{CDSS18,BDS25,PSw22,PS23} for fully nonlinear equations. Most of the existing literature concerns elliptic problems, whereas the parabolic theory has received less attention.

\subsection{Properties of $F^+$ and $F^-$} \label{sec:operators}
Let $\S_n$ be the set of $n\times n$ symmetric matrices. 
Given $0<\lambda \leq \Lambda$, we define the set of uniformly elliptic matrices as
$$
\mathcal{A}_{\lambda,\Lambda} := \big\{ A\in \S_n : \lambda I \leq A \leq \Lambda I \big\}.
$$
Recall the Pucci extremal operators are given by
\begin{equation} \label{eq:pucci}
\M^-(M) := \inf_{A\in \mathcal{A}_{\lambda,\Lambda}} \text{tr}(AM)
\quad \text{ and } \quad
\M^+(M) := \sup_{A\in \mathcal{A}_{\lambda,\Lambda}} \text{tr}(AM).
\end{equation}
In the following, we will assume that $F^\pm : \S_n \to \R$ satisfy three structural conditions:\medskip

\begin{enumerate}[$(i)$]
    \item \textit{Uniform ellipticity:} For any $M,N\in \S_n$, with $N\geq 0$,
    $$\M^-(N)\leq F^{\pm}(M+N)-F^{\pm}(M)\leq \M^+(N).$$
    \item \textit{Positive 1-homogeneity:} For any $M \in \S_n$ and $r>0$,
    $$F^{\pm}(r M)=r F^{\pm}(M).$$
    \item \textit{Ordering:} For any $M \in \S_n$,
    $$F^-(M)\leq \tr(M)\leq F^+(M).$$
\end{enumerate}

\begin{rem}
    From $(ii)$, we have $F^\pm(0)=0$. Moreover, we may write $(i)$ equivalently as
    $$
    \M^-(M-N)\leq F^{\pm}(M)-F^{\pm}(N)\leq \M^+(M-N), \quad \text{ for all } M,N\in \S_n. 
    $$
    Taking $N=0$, and using $(iii)$, we get
    \begin{equation} \label{eq:ordering}
    \M^-(M)\leq F^-(M)\leq F^+(M)\leq \M^+(M).
    \end{equation}
    We point out that the ordering assumption involving the trace operator is only needed in the blow-up analysis of the free boundary (see Section~\ref{s:blowup}), while in the rest of our arguments we only use $F^-(M) \leq F^+(M)$.
\end{rem}

\begin{rem}[Examples]
Clearly, the Pucci operators satisfy the above conditions, provided $\lambda \leq 1 \leq \Lambda$, so that $(iii)$ holds. In fact, these properties hold for more general concave or convex operators. Indeed, let $\mathcal{A}^-,\mathcal{A}^+ \subseteq \mathcal{A}_{\lambda,\Lambda}$ and consider
$$
F^-(M) = \inf_{A\in \mathcal{A}^-} \tr(AM), \qquad F^+(M)= \sup_{A\in \mathcal{A}^+} \tr(AM).
$$
Then, $F^-$ is concave, $F^+$ is convex, and they satisfy $(i)$ and $(ii)$. Conversely, any concave (resp. convex) operator satisfying $(i)$ and $(ii)$ can be written in this form, and in particular, if $F$ is differentiable away from $0$, the matrices $A$ are the range of $DF$. Moreover, $(iii)$ holds~if 
$$
I \in \mathcal{A^-}\cap \mathcal{A^+}.
$$
Such operators have been considered in \cite{CPQT19}. 

In this paper, we do not assume convexity or concavity of the operators. We consider a more general class satisfying only the conditions $(i)$--$(iii)$.
\end{rem}

\subsection{Main results and strategy}
We now state the main results and briefly discuss the strategy of the proof. First, we construct a Lipschitz viscosity solution.

\begin{thm}[Existence and Lipschitz regularity] \label{thm:main1}
Let $f: \overline\Omega \to \R$ and $g: \partial \Omega\times [0,T]\to \R$ be continuous functions with $f^\pm \not\equiv 0$ and $g^\pm\not\equiv0$.
Assume that $f=g$ on $\partial\Omega\times\{0\}$. Then:
\begin{enumerate}[$(i)$]
    \item There exists a viscosity solution $u$ of \eqref{eq:FBP} such that $u^\pm \not\equiv 0$. 
    \item If $f,g \in C^{1,\alpha}$, for some $\alpha \in (0,1)$, then the solution $u$ given in $(i)$ is Lipschitz continuous (in the parabolic sense) on $\overline\Omega\times [0,T]$, with
 $$
 \|u\|_{C^{0,1}(\overline\Omega\times [0,T])} \leq C \big ( \|f\|_{C^{1,\alpha}(\overline\Omega)} + \|g\|_{C^{1,\alpha}(\partial\Omega\times[0,T])}\big),
 $$
 where $C>0$ depends only on $n$, $\lambda$, $\Lambda$, $\alpha$, $\Omega$, and $T$. 
\end{enumerate}
\end{thm}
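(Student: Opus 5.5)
We construct $u$ as the segregation limit of a parabolic competition--diffusion system, following \cite{Q13,CPQT19}. For $\varepsilon>0$ consider
\begin{equation*}
\begin{cases}
\partial_t u_1^\varepsilon - F^-(D^2u_1^\varepsilon) = -\frac{1}{\varepsilon}u_1^\varepsilon u_2^\varepsilon & \text{in } \Omega\times(0,T],\\
\partial_t u_2^\varepsilon - \widetilde F^{+}(D^2u_2^\varepsilon) = -\frac{1}{\varepsilon}u_1^\varepsilon u_2^\varepsilon & \text{in } \Omega\times(0,T],
\end{cases}
\end{equation*}
where $\widetilde F^{+}(M):=-F^+(-M)$, so that $v=-u_2$ solves $\partial_t v - F^+(D^2 v)=\frac1\varepsilon u_1u_2$. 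The data are $u_1^\varepsilon=f^+,g^+$ and $u_2^\varepsilon=f^-,g^-$ on the parabolic boundary. Existence of a nonnegative viscosity solution $(u_1^\varepsilon,u_2^\varepsilon)$ for fixed $\varepsilon$ should follow from Perron's method, or from a Schauder fixed point on the truncated system. The system is quasi-monotone: each equation is decreasing in the other component. Comparison with the uncoupled problems gives $0\le u_i^\varepsilon\le \bar u_i$, where $\bar u_1$ solves the $F^-$ equation with data $f^+,g^+$, and similarly for $\bar u_2$. This gives uniform $L^\infty$ bounds.

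The key step is to obtain estimates uniform in $\varepsilon$. Set $w^\varepsilon=u_1^\varepsilon-u_2^\varepsilon$. By $(i)$ and the ordering $F^-\le F^+$:
\begin{itemize}
\item in $\{u_2^\varepsilon=0\}$-type arguments, or more robustly in the viscosity sense, $w^\varepsilon$ satisfies the two Pucci inequalities $\partial_t w-\M^-(D^2w)\ge -\frac1\varepsilon u_1u_2+\frac1\varepsilon u_1u_2=0$ and $\partial_t w-\M^+(D^2 w)\le 0$;
\item hence $w^\varepsilon\in S(\lambda,\Lambda,0)$, the class of viscosity solutions of both Pucci inequalities.
\end{itemize}
The competition terms cancel exactly. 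The interior Krylov--Safonov/Wang estimates \cite{WI92} for the class $S$ then give uniform parabolic $C^\alpha$ bounds, and boundary H\"older estimates follow from continuity of $f,g$. To get equicontinuity of $u_i^\varepsilon$ individually, we use that $u_1^\varepsilon$ is a subsolution of the $F^-$ equation and bounded by $\bar u_1$, together with a standard interior $C^\alpha$ argument for $\partial_t u_1-F^-(D^2u_1)\le 0$, $u_1\ge0$. Alternatively, we pass to the limit only in $w^\varepsilon$ and recover $u_i$ as $w^\pm$. By Arzel\`a--Ascoli, $w^\varepsilon\to u$ locally uniformly.

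To pass to the limit we use stability of viscosity solutions. On compact subsets of $\{u>0\}$, $u_2^\varepsilon\to0$ uniformly. The standard estimate $\frac1\varepsilon u_1u_2\to0$ there (a barrier showing $u_2^\varepsilon\le Ce^{-c/\sqrt\varepsilon}$ where $u_1^\varepsilon\ge\delta$) gives $\partial_tu-F^-(D^2u)=0$. Symmetrically, $\partial_t u-F^+(D^2u)=0$ in $\{u<0\}$. For the global inequalities, $u_1^\varepsilon$ is an $F^-$-subsolution and $-u_2^\varepsilon$ an $F^+$-supersolution, and the global inequalities follow from the sign of the coupling. For instance, $w$ is a supersolution of the $F^-$ equation because $\partial_t w - F^-(D^2 w)\ge \partial_t u_1-F^-(D^2u_1) + [\partial_t(-u_2)-\text{terms}]$, which we verify via the ellipticity inequality with test functions. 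The nontriviality $u^\pm\not\equiv0$ comes from the boundary data $g^\pm\not\equiv0$ and continuity up to the boundary.

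For part $(ii)$ we want Lipschitz bounds uniform in $\varepsilon$. Since $w^\varepsilon\in S(\lambda,\Lambda,0)$ only gives $C^\alpha$, we argue directly on the limit $u$:
\begin{itemize}
\item \emph{Boundary:} construct $C^{1,\alpha}$ barriers from the solutions of $\partial_t\phi-\M^\pm(D^2\phi)=0$ with data $f,g$. Boundary $C^{1,\alpha}$ regularity of these Pucci problems (flat boundary estimates of Wang \cite{WII92}) gives $|u(x,t)-g(y,t)|\le C|x-y|$ near $\partial\Omega$, using comparison since $u$ lies between the Pucci sub- and supersolutions.
\item \emph{Interior:} by translation invariance and comparison for the free boundary problem, or at the $\varepsilon$ level, the difference $u(x+h,t)-u(x,t)$ lies in the class $S$ with boundary values $O(|h|)$. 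The maximum principle then gives spatial Lipschitz bounds. Uniqueness/comparison at the $\varepsilon$-level system, which is translation invariant and quasi-monotone, makes this rigorous.
\item \emph{Time:} $1/2$-H\"older regularity in time follows from spatial Lipschitz via standard barriers for the class $S$.
\end{itemize}

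The main obstacle is the interior Lipschitz step. It needs a comparison principle for the approximating system under translations: $u_1^\varepsilon(\cdot+h)$ versus $u_1^\varepsilon+C|h|$ with coupled terms. We would try to prove comparison for the quasi-monotone system via the doubling of variables technique.
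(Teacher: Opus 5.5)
Your approach has a genuine gap at the structural level. For general $F^\pm$ satisfying $(i)$--$(iii)$, the competition--diffusion system you write down does not converge to a solution of \eqref{eq:FBP}. The failure shows up first in the claim $w^\varepsilon\in S(0)$.

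Formally, $\partial_t w^\varepsilon=F^-(D^2u_1^\varepsilon)+F^+(D^2v^\varepsilon)$ while $D^2w^\varepsilon=D^2u_1^\varepsilon+D^2v^\varepsilon$. At a point where $D^2u_1^\varepsilon=M=-D^2v^\varepsilon$, you would need $F^-(M)+F^+(-M)=0$, that is, the duality $F^-(M)=-F^+(-M)$. This duality holds for the Pucci pair $\M^\pm$, which is Quitalo's setting. It is not implied by uniform ellipticity and $F^-\le F^+$.

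A one-dimensional stationary computation shows the failure is not merely technical. Take $F^-=\tr$ and $F^+=\M^+$ with $\lambda<1<\Lambda$, and look at profiles depending only on $x_1$. Since $u_1'',u_2''\ge 0$, your system reads $u_1''=\frac1\varepsilon u_1u_2=\lambda u_2''$, so $u_1^\varepsilon-\lambda u_2^\varepsilon$ is affine. In the limit this gives $u=a x_1^+-b x_1^-$ near the interface with $a=\lambda b<b$. This concave kink violates $\partial_t u-F^+(D^2u)\le 0$: touch it from above by $\frac{a+b}{2}x_1-cx_1^2$, as in Step 2 of the proof of Lemma~\ref{lem:blowups}.

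So your step ``the global inequalities follow from the sign of the coupling'' cannot be carried out. Your system produces a flux-balance condition across $\Gamma$, not continuity of $\nabla u$. The fallback via equicontinuity of $u_1^\varepsilon$ also fails. A one-sided inequality, with right-hand side $-\frac1\varepsilon u_1u_2$ unbounded in $\varepsilon$, gives no H\"older modulus.

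Second, you leave the interior Lipschitz step open, and neither of your suggested routes closes it.
\begin{enumerate}[$(i)$]
\item At the system level, $(u_1^\varepsilon+c,\,u_2^\varepsilon-c)$ is not an admissible super/subsolution pair. The difference $(u_1+c)(u_2-c)-u_1u_2=c(u_2-u_1)-c^2$ has no sign, and $u_2-c$ may become negative.
\item At the level of the limit, you have no comparison principle for \eqref{eq:FBP}. The difference of two functions satisfying $F^-\le\partial_t\le F^+$ is also not known to lie in $S(0)$.
\end{enumerate}

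The missing idea is to regularize with a single proper scalar equation instead of a system:
$$G_\varepsilon(M,r)=h_\varepsilon(r)F^-(M)+(1-h_\varepsilon(r))F^+(M).$$
This operator is uniformly elliptic. It is nonincreasing in $r$, because $h_\varepsilon'\ge0$ and $F^-\le F^+$. It also satisfies $F^-\le G_\varepsilon\le F^+$. With it, every step goes through:
\begin{enumerate}[$(i)$]
\item Comparison and Perron's method give $u^\varepsilon$, with the global inequalities built in.
\item The bound $u^\varepsilon(\cdot+h,\cdot)\le u^\varepsilon+C_0|h|$ follows from comparison, since by properness adding a positive constant yields a supersolution.
\item Boundary control comes from the Pucci barriers, as in your boundary step.
\item Time regularity comes from the heat-kernel barrier.
\item The limit solves \eqref{eq:FBP} by stability, since $G_\varepsilon=F^\mp$ where $\pm u^\varepsilon>\varepsilon$.
\end{enumerate}
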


Note that, in general, the boundary regularity in $(ii)$ is false if $f,g$ are merely Lipschitz (see Remark~\ref{rem:data}), or even $C^1$.

The idea to prove $(i)$ is to regularize the jump of the operators across the free boundary. We consider a smooth interpolation $G_\vep$ between $F^-$ and $F^+$ and solve the regularized problem 
\begin{equation}\label{eq:reg}
    \partial_t u^\vep - G_\vep(D^2 u^\vep , u^\vep)=0 \quad \text{ in } \Omega\times(0,T],
\end{equation}
 by Perron's method (see Theorem~\ref{thm:existencereg}). The ellipticity and monotonicity of $G_\vep$ ensure that the comparison principle holds for \eqref{eq:reg}. 
 
 To prove $(ii)$, we show that the solutions $u^\vep$ to \eqref{eq:reg} satisfy the uniform Lipschitz estimate
 \begin{equation} \label{eq:uniflip}
     \|u^\vep\|_{C^{0,1}(\overline\Omega\times[0,T])} \leq C_0,
 \end{equation}
where $C_0$ is independent of $\vep$ (see Theorem~\ref{thm:uniformest}). The Lipschitz norm is understood in the parabolic sense, i.e., $u^\vep$ is $C^{0,1}$ in space and $C^{1/2}$ in time (see Subsection~\ref{sec:parabolicspaces}). The uniform $C^{0,1}$ estimate in space follows by first obtaining boundary estimates through Pucci barriers and then propagating them into the interior. 
The uniform $C^{1/2}$ estimate in time follows by comparing $u^\vep$ with a suitable global solution to the heat equation (see Appendix~\ref{a:heateqnsol}). 
Once \eqref{eq:uniflip} is established, passing to the limit ($\vep\to 0$) in \eqref{eq:reg}, we obtain a Lipschitz viscosity solution to the free boundary problem \eqref{eq:FBP}, thus proving Theorem~\ref{thm:main1}. This approach avoids the fixed-point construction and the envelope arguments used in \cite{CPQT19}.\medskip

Regarding the regularity of $u$, we note that, in the interior of each phase, $u$ satisfies a uniformly parabolic fully nonlinear equation, so classical regularity theory gives 
$$u\in C_{\rm loc}^{1,\alpha}(\Omega\times(0,T]\setminus \Gamma), \quad \text{ for some } \alpha\in (0,1),$$
in the parabolic sense (see Subsection~\ref{sec:parabolicspaces}).
Hence, we only need to study the behavior of $u$ across $\Gamma$. To this end, fix $(x_0,t_0)\in \Gamma$, and for $r>0$, consider the rescaled function
$$
u_r(x,t) = \frac{1}{r}u(rx+ x_0, r^2 t + t_0).
$$
Since $u$ is Lipschitz and $u(x_0,t_0)=0$, the family $\{u_r\}$ is Lipschitz, uniformly in $r$. Hence, by compactness, along a subsequence, 
$$
u_{r_j} \to u_0 \quad \text{ locally uniformly as $r_j\to 0$}. 
$$
The key step is to classify all possible blow-up limits $u_0$. If $u$ vanishes faster than linearly at $(x_0,t_0)$, in the parabolic scaling, then every blow-up limit is identically zero. This motivates the following definition.

\begin{defn}\label{def:regpts}
    We say that $(x_0,t_0)\in \Gamma$ is a \textit{regular point} of the free boundary if there exist positive constants $r_0=r_0(x_0,t_0)$ and $M=M(x_0,t_0)$ such that 
$$
\sup_{Q_r(x_0,t_0)} |u(x,t)|\geq Mr, \qquad \text{for all } 0<r<r_0.
$$
Otherwise, we say that $(x_0,t_0)$ is a \textit{singular point.}
\end{defn}
 The definition of a regular point only states that $u$ grows at least linearly from one side at every sufficiently small scale. We then prove in Lemma~\ref{lem:nondegeneracy} a non-degeneracy result showing that linear growth in one phase implies linear growth in both phases. Namely,
 $$
N\, r \leq \sup_{Q_r(x_0,t_0)} u^\pm \leq \sqrt{2} L\, r, \qquad \text{ for all } 0<r<r_0.
$$
 The upper bound follows from the Lipschitz continuity of $u$ and shows that the linear growth at $(x_0,t_0)$ is optimal.
We arrive at the following dichotomy:

\begin{thm}[Differentiability in space across the free boundary] \label{thm:main2}
Let \(u\) be a Lipschitz viscosity solution of \eqref{eq:FBP}. Then $u$ is differentiable with respect to $x$ on $\Gamma$. 
Moreover:
\begin{enumerate}
    \item[$(i)$] If $(x_0,t_0)\in\Gamma$ is a singular point, then $$\nabla u(x_0,t_0)=0.$$
    \item[$(ii)$] If $(x_0,t_0)\in\Gamma$ is a regular point, then $u$ has a parabolic $C^{1,\alpha}$ expansion, with $$\nabla u(x_0,t_0)\neq0.$$
\end{enumerate}
 \end{thm}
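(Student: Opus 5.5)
The proof splits according to Definition~\ref{def:regpts}, and the two cases are handled quite differently.

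\emph{Singular points.} Fix $(x_0,t_0)\in\Gamma$ and suppose it is singular. Then there is a sequence $r_j\to0$ with $\sup_{Q_{r_j}(x_0,t_0)}|u|\le M_j r_j$ and $M_j\to0$. This alone does not give $\nabla u(x_0,t_0)=0$; we need $\sup_{Q_r}|u|=o(r)$ along \emph{all} $r\to0$. I would argue by contradiction. Suppose $\sup_{Q_{s_k}}|u|\ge \delta s_k$ along some other sequence $s_k\to0$. The natural tool is the parabolic Alt--Caffarelli--Friedman monotonicity formula mentioned in the abstract, applied to $u^\pm$. Both $u^\pm$ are nonnegative subsolutions of the heat equation, since $\partial_t u^+-\Delta u^+\le0$ follows from $\partial_t u - F^-(D^2u)=0$ in $\{u>0\}$ together with $F^-\le\tr$ (this is where ordering $(iii)$ enters), and similarly for $u^-$ using $F^+\ge \tr$. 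So the ACF functional $\Phi(r)$ is monotone and has a limit $\Phi(0^+)$.

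Next I would use the non-degeneracy Lemma~\ref{lem:nondegeneracy}: linear growth on one side forces linear growth on both. Combined with the Lipschitz bound, this gives $\Phi(s_k)\ge c(\delta)>0$ at the scales $s_k$ (or at comparable scales), so $\Phi(0^+)>0$. On the other hand, $\Phi(r_j)\lesssim M_j^4\to0$ because $\Phi$ is controlled by $\sup|u|^4/r^4$. This contradicts monotonicity. Hence $u(x,t_0)=o(|x-x_0|)$ and $\nabla u(x_0,t_0)=0$.

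The delicate point is that non-degeneracy in Lemma~\ref{lem:nondegeneracy} may be stated only for points already known to be regular. In that case I would redo its proof at a single scale: a quantitative bound $\sup_{Q_s}u^+\ge\delta s$ should force $\sup_{Q_{s/2}}u^-\ge c\delta s$ via a barrier argument with the global inequalities.

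\emph{Regular points.} Here $\Phi(0^+)>0$ by non-degeneracy. Rescale $u_r$ and extract blow-ups $u_0$, which are global Lipschitz solutions of \eqref{eq:FBP} whose ACF functional $\Phi_{u_0}$ is constant. The equality case of the parabolic ACF formula forces $u_0^\pm$ to be caloric with complementary half-space supports. Hence
$$u_0(x,t)=a\,(x\cdot\nu)^+-b\,(x\cdot\nu)^-,\qquad a,b>0.$$
The ratio $a/b$ is presumably fixed by the viscosity inequalities, since the kink must be compatible with $F^-$ and $F^+$. At this stage the flatness of $u_r$ is known only along subsequences, and uniqueness of $\nu$ and of $a,b$ is not yet known.

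To get uniqueness I would run an improvement-of-flatness argument. If $u$ is $\varepsilon$-close in $Q_1$ to a two-plane profile $a(x\cdot\nu)^+-b(x\cdot\nu)^-$, then in $Q_\rho$ it is $\rho^{\alpha}\varepsilon$-close to a new profile with slightly changed $\nu,a,b$. I would prove this by compactness: normalize $(u-P)/\varepsilon$ and show that it converges to a solution of a linearized transmission problem. That problem is a pair of constant-coefficient linear parabolic equations (the linearizations of $F^\mp$) in the half-spaces, coupled by transmission conditions derived from the global inequalities. Its $C^{1,\alpha}$ regularity up to the flat interface then gives the improvement. Iterating yields a parabolic $C^{1,\alpha}$ expansion
$$u=a(x\cdot\nu)^+-b(x\cdot\nu)^-+O\big(|x-x_0|+|t-t_0|^{1/2}\big)^{1+\alpha}.$$
So $u$ is differentiable in $x$ and $\nabla u(x_0,t_0)$ exists and is nonzero. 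Note that differentiability in the two-plane sense requires $a=b$, or is to be read as one-sided gradients. In fact, comparing the two viscosity inequalities against the kink should give $a=b$, since a convex kink violates the subsolution inequality and a concave one the supersolution inequality. I would check this first; it yields $\nabla u=a\nu\ne0$.

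The main obstacle is the linearized problem and its compactness, namely the non-flat operators with nonlinear $F^\pm$. The fallback is to use the fact that the operators are only Lipschitz and pass to limits in the $\varepsilon$-normalized viscosity sense à la De Silva.
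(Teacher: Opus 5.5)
Your architecture matches the paper's: ACF blow-ups, rigidity, $a=b$ by testing the kink, a De Silva-type improvement of flatness, and iteration. But the step you flag as the main obstacle is set up incorrectly. There is no linearization: $F^\pm$ are only uniformly elliptic and positively $1$-homogeneous, and a $1$-homogeneous operator that is differentiable at $0$ is linear. So the limit cannot be a pair of constant-coefficient equations.

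The missing observation is that once $a=b$, the profile $x\cdot e$ is linear. By homogeneity, $v_k=(u_k-x\cdot e)/\delta_k$ then satisfies exactly the same problem as $u_k$: $F^-$ in $\{u_k>0\}$, $F^+$ in $\{u_k<0\}$, and both global inequalities in all of $Q_1$. This is why you should iterate with linear profiles $x\cdot\nu$, not two-plane ones. Hence $v_k\in S(0)$, which gives compactness, and the limit $w$ solves the \emph{fully nonlinear} flat problem: $F^-$ in $\{x_n>0\}$, $F^+$ in $\{x_n<0\}$, and the global inequalities in $Q_{3/4}$.

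Your fallback points this way, but you still need two ingredients you do not supply. The first is the viscosity transmission condition $\partial^+_{x_n}w=\partial^-_{x_n}w$: a test function $P(x',t)+px_n^+-qx_n^-$ touching from above with $p<q$ can be replaced by a concave parabola in $x_n$, contradicting $\partial_t w-F^+(D^2w)\le 0$ (Lemma~\ref{lem:Pucci_implies_transmission}). The second is a $C^{1,\alpha}$ estimate for fully nonlinear parabolic transmission problems across a flat interface. This is a genuine external input (Lemma~\ref{lem:blowup_C1_alpha}, from \cite{JSC26}), not a classical fact. Also, your kink directions are reversed. A convex kink ($a>b$) is touched from below at the origin by $\frac{a+b}{2}x_n+\frac{a-b}{2}x_n^2$ and so violates the supersolution inequality $\partial_t u-F^-(D^2u)\ge 0$; a concave kink violates the $F^+$-subsolution inequality.

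On the monotonicity-formula side, the two claims that decide the dichotomy are asserted rather than proved.

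\textbf{Constant ACF functional for blow-ups.} This requires $I(t;u_{r_j}^\pm)\to I(t;u_0^\pm)$, i.e.\ strong $L^2_{\rm loc}$ convergence of $\nabla u_{r_j}^\pm$. Uniform convergence gives only weak convergence, interior $C^{1,\alpha}$ estimates degenerate at the free boundary, and there is no common equation for $u_{r_j}-u_0$ because the phases obey different operators. The paper treats this as the main technical point and uses the subcaloricity of $u^\pm$. A truncation argument gives $\iint_{Q\cap\{|u_{r_j}|<h\}}|\nabla u_{r_j}|^2\le Ch$ uniformly in $j$, while $C^{1,\alpha}$ convergence handles $\{|u_0|\ge h\}$.

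\textbf{Non-degeneracy gives $J>0$.} This does not follow from linear growth of $u^\pm$ plus the Lipschitz bound, because you must exclude a phase that is nonzero but spatially constant. The paper does this on the blow-up. If $I(t;u_0^+)=0$, then $u_0^+=c(t)$, and the equation gives $c'=F^-(0)=0$ on $\{c>0\}$. Together with $c(0)=0$ and continuity, this forces $c\equiv 0$, contradicting non-degeneracy.

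The same argument is what closes your singular-point case. Rescale at $s_k$; then $J(s_k^2\,\cdot)\to J(0^+)=0$ forces one phase of the limit to have zero energy (lower semicontinuity suffices here), hence to vanish. The strong maximum principle for the remaining one-signed sub- or supersolution then gives $u_0\equiv 0$, contradicting $\sup_{Q_1}|u_0|\ge\delta$.

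Finally, the bound $\Phi(r_j)\lesssim M_j^4$ is not right as stated: $I(r^2;\cdot)$ averages the energy over all scales below $r$ and has Gaussian tails. One gets $J(0^+)=0$ instead by choosing $\rho_j=r_j/j$, so that $u_{\rho_j}\to 0$ locally uniformly, and passing to the limit in $J$.
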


 In particular, spatial differentiability holds at every free boundary point, regardless of the regularity of the interface. 
To see this, assume that $(0,0)\in \Gamma$. The theorem is proved in two main steps. First, we classify all possible blow-up limits $u_0$ (see Lemma~\ref{lem:blowups}), using a parabolic ACF-type monotonicity formula (see Theorem~\ref{thm:ACFlocal}). We may use this tool thanks to the ordering assumption $(iii)$ (see Remark~\ref{rem:ordering}). 
A technical difficulty lies in passing to the limit in the monotonicity formula, whose integrals involve the gradients of the rescaled solutions, so uniform convergence to a blow-up limit does not justify this. Since different operators govern the two phases, interior regularity estimates control the gradients only away from the free boundary. The main challenge is to control the energy near the interface.

If $(0,0)$ is a regular point, we see that $u_0$ must be a two-plane profile, i.e.,
$$
 u_{r_j} \to u_0(x)=a (x\cdot e)^+ - b (x \cdot e)^-, \quad \text{ for some }  a, b>0, \quad |e|=1.
$$
By the global viscosity inequalities, we deduce that the slopes must be equal: $a=b$. Hence, for sufficiently small scales, $u$ is close to a linear function $a(x\cdot e)$. Now, a priori, the slope $a$ and the direction $e$ could depend on the subsequence $r_j$. The second step consists of proving this is not the case, that is, the blow-up is unique. Indeed, after rescaling, we may assume that $a=1$ and that the graph of $u$ is $\delta$-flat in the $e$-direction at scale 1, i.e.,
$$
\sup_{Q_1} |u-x\cdot e|\leq \delta
$$
(see Remark~\ref{rem:normal}). Inspired by the work of Daniela De Silva in \cite{DS11}, we argue that if this condition holds, then the graph of $u$ must be $\delta/2$-flat in the $\nu$-direction at scale $r$, with $\nu$ close to $e$ (see Theorem~\ref{thm:IF}).
An iteration of this improvement of flatness gives the $C^{1,\alpha}$ regularity of $u$ at $(0,0)$, with quantitative estimates. Note that this implies the blow-up limit is uniquely determined by the linear asymptotic expansion of $u$ at $(0,0)$.

Now, if $(0,0)$ is a singular point, then $u_0$ is identically zero, and this is true for any convergent subsequence $u_{r_j}$. Hence, the blow-up limit is unique, and the gradient of $u$ is well-defined at $(0,0)$. In particular, $\nabla u(0,0)=0$, thus concluding Theorem~\ref{thm:main2}.

\begin{rem} 
Our above strategy differs from the one used in \cite{CPQT19} for the elliptic problem. There, the authors classify blow-up limits at regular points and establish an asymptotic expansion at points admitting a tangent ball from one side. Then they prove the $C^{1,\alpha}$ regularity of the free boundary using the classical approach developed by Caffarelli \cite{C87,C89}, based on sup-convolutions and improvement of monotonicity cones. 
\end{rem}

Finally, we deduce some results about the set of regular points. 

\begin{thm}[Free boundary regularity] \label{thm:main3}
Let $u$ be a Lipschitz viscosity solution of \eqref{eq:FBP}, and let $\mathcal{R}$ be the set of regular points. Then $\mathcal{R}$ is an open subset of $\Gamma$ and locally a $C^{1,\alpha}$ hypersurface, in the parabolic sense, for some $0<\alpha<1$. 
\end{thm}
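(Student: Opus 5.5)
The plan is to deduce Theorem~\ref{thm:main3} from the quantitative improvement of flatness (Theorem~\ref{thm:IF}), the blow-up classification (Lemma~\ref{lem:blowups}), and the non-degeneracy Lemma~\ref{lem:nondegeneracy}, by making every constant uniform in a neighborhood of a fixed regular point. Fix $(x_0,t_0)\in\mathcal R$. By Lemma~\ref{lem:blowups} and the equal-slopes argument, some rescaling $u_{r}$ with $r$ small satisfies $\sup_{Q_1}|u_r/a - x\cdot e|\le \delta/2$ for some $a>0$, $|e|=1$, where $\delta$ is the flatness threshold of Theorem~\ref{thm:IF}. This is the single step where compactness enters, and I will use it only at the base point.

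\textbf{Step 1: openness and uniform flatness nearby.} For $(y,s)\in\Gamma$ with $|y-x_0|+|s-t_0|^{1/2}\le \eta r$, the rescaling of $u$ centered at $(y,s)$ at scale $r/2$ is still within $\delta$ of a linear function $x\cdot e$, up to a small translation. This follows from the Lipschitz bound in Theorem~\ref{thm:main1}: $|u(y,s)|=0$, and shifting the center moves the linear profile by $O(\eta)$ in $Q_1$. The translation must be absorbed; because $u(y,s)=0$, the offset is itself $O(\eta+\delta)$ in a slightly smaller cylinder, so choosing $\eta$ small gives $\delta$-flatness. Iterating Theorem~\ref{thm:IF} at $(y,s)$ then yields a parabolic $C^{1,\alpha}$ expansion
\[
|u(x,t)-a_{y,s}\,\nu_{y,s}\cdot(x-y)|\le C\big(|x-y|+|t-s|^{1/2}\big)^{1+\alpha},
\]
with $a_{y,s}\ge a/2>0$ and constants uniform in $(y,s)$. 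In particular $u$ has linear growth at $(y,s)$, so $(y,s)\in\mathcal R$. Hence $\mathcal R$ is relatively open in $\Gamma$.

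\textbf{Step 2: Hölder continuity of the normal.} Given two such points $P_1=(y_1,s_1)$ and $P_2=(y_2,s_2)$ at parabolic distance $d$, I compare the two expansions on a cylinder of radius $d$ around $P_1$. This gives $|a_1\nu_1-a_2\nu_2|\le Cd^{\alpha}$, using the non-degeneracy to convert sup bounds into bounds on the linear coefficients. So $P\mapsto \nabla u(P)$ is $C^{\alpha}$ in the parabolic sense on $\Gamma$ near $(x_0,t_0)$, and it is bounded away from zero.

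\textbf{Step 3: the graph.} Rotate so that $e=e_n$. Uniform flatness at all scales shows that, in a cylinder around $(x_0,t_0)$, $u>0$ for $x_n> y_n+C\delta\rho$ and $u<0$ below, within each $Q_\rho(y,s)$ with $(y,s)\in\Gamma$. Hence $\Gamma=\{x_n=\varphi(x',t)\}$ with $\varphi$ Lipschitz. The pointwise $C^{1,\alpha}$ expansion with normal $\nu$ depending $C^\alpha$ on the point then upgrades $\varphi$ to $C^{1,\alpha}$ in the parabolic sense, with $\nabla_{x'}\varphi=-\nu'/\nu_n$ and $|\varphi(x',t)-\varphi(x',s)|\le C|t-s|^{(1+\alpha)/2}$.

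The main obstacle is Step 1: making the improvement of flatness uniform at nearby points. This requires Theorem~\ref{thm:IF} to hold for any solution that is $\delta$-flat at scale one, with no extra hypotheses tied to the specific point, which I expect to be the case. The translation and recentering bookkeeping there is where care is needed.
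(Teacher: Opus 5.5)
Your proposal is correct and follows essentially the same route as the paper: you recenter the initial flatness at nearby free boundary points using $u(y,s)=0$ to control the offset, iterate Theorem~\ref{thm:IF} to get uniform parabolic $C^{1,\alpha}$ expansions (Theorem~\ref{thm:regularity}), read off openness from the resulting linear growth, compare expansions to get H\"older continuity of the normal (Corollary~\ref{cor:normal_Holder}), and finish with the cone/graph argument. Your use of the time-Lipschitz bound to also cover free boundary points with $s>t_0$ adds care the paper omits, since the paper's openness argument only explicitly treats $\Gamma\cap Q_\rho$, which is a backward cylinder.
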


The proof follows from $C^{1,\alpha}$ estimates at regular points by standard arguments. 

\begin{rem}[Open problem]
The singular set remains to be understood. Even for the isotropic case $F=F^+=F^-$, it is not known whether the critical nodal set $\{u=0, \nabla u=0\}$ of solutions to $F(D^2 u)=0$  or $\partial_t u - F(D^2 u)=0$ is lower dimensional, in general. This is a challenging open problem as classical tools for linear equations, such as Almgren-type monotonicity formulas, are not available in this setting. We will investigate this issue in a future work.
\end{rem}

\subsection{Organization of the paper} 
In Section~\ref{s:prelim}, we introduce notation, definitions, and tools used throughout the paper. In Section~\ref{s:existreg}, we construct viscosity solutions via a regularization argument and establish the Lipschitz regularity. Section~\ref{s:nondegen} is devoted to the non-degeneracy at regular points. In Section~\ref{s:blowup}, we classify blow-up limits and deduce spatial differentiability at singular points. In Section~\ref{sec:IF}, we prove the improvement of flatness, which we iterate in Section~\ref{sec:regularity} to obtain the $C^{1,\alpha}$ regularity of the solution and the free boundary at regular points. Finally, Appendices~\ref{a:flattrans} and \ref{a:heateqnsol} contain auxiliary results on flat transmission problems and global solutions to the heat equation, respectively.

\section{Preliminaries}\label{s:prelim}

We introduce the notation, definitions, and basic tools that we will use in the paper.

\subsection{Notation} 

Given $(x,t)\in \R^{n+1}$, we call $x\in \Rn$ the space variable and $t \in \R$  the time variable. 
Given a function $u(x,t)$, we denote by $\nabla u$ the gradient of $u$ with respect to $x$, $D^2 u$ the Hessian with respect to $x$, and $\partial_t u$ the partial derivative with respect to $t$.  We denote $u^+:=\max\{u,0\}$ and $u^-:= -\min\{u,0\}$, the positive and negative parts of $u$, respectively.

 We call a \textit{parabolic cylinder} any set of the form 
 $$Q=\Omega\times (t_1, t_2],$$ 
 where $\Omega$ is a smooth bounded domain in $\Rn$ and $t_1<t_2$. We denote by $\partial_p Q$ the \textit{parabolic boundary} of $Q$, i.e., $\partial_p Q= \partial_b Q \cup \partial_l Q$, where
 $ \partial_b Q= \Omega \times \{t_1\}$ is the \textit{bottom}, and
 $\partial_l Q= \partial \Omega \times [t_1,t_2]$ is the \textit{lateral} part of the boundary.
  
Given $(x,t), (y,s)\in \R^{n+1}$, we define the \textit{parabolic distance} as
$$
d_p((x,t), (y,s)) :=\big(|x-y|^2 + |t-s|\big)^{1/2}.
$$
 We denote Euclidean balls and cylinders in the usual way. Let $B_r = \{x \in \Rn : |x|< r\}$ be the ball of radius $r$ centered at $0$ in $\R^n$. For $x_0\in \Rn$ and $t_0 \in \R$, we write: 
 \begin{align*}
   B_r(x_0) &= B_r + x_0,\\
  Q_r(x_0,t_0) &= B_r(x_0)\times(t_0-r^2,t_0].
 \end{align*}

 \subsection{Parabolic spaces} \label{sec:parabolicspaces}

 Let $C(Q)$ be the set of continuous functions in $Q$. 
Given $k\in \N$, $C^{k}(Q)$ is the set of all functions $u$ such that $D_x^m D_t^l u \in C(Q)$ for all $|m|+2 l\leq k$,  where $m$ is a multi-index and $l\geq 0$ is an integer.
 
 Given $\alpha\in(0,1]$, we define $C^{0,\alpha}(\overline{Q})$ as the set of all functions $u\in C(\overline{Q})$ such that
 $$
\|u\|_{C^{0,\alpha}(\overline{Q})} := \|u\|_{L^\infty(Q)} + [u]_{C^{0,\alpha}(\overline{Q})} < \infty,
 $$
 where the $\alpha$-H\"{o}lder seminorm (or Lipschitz seminorm if $\alpha=1$) of $u$ is given by
 \begin{align*}
 [u]_{C^{0,\alpha}(\overline{Q})} := \sup_{ (x,t)\neq (y,s)} \frac{|u(x,t)-u(y,s)|}{d_p((x,t),(y,s))^{\alpha}}.
 \end{align*}
 We also define the $\alpha$-H\"{o}lder seminorm in $t$ as
 $$
[u]_{C_t^{0,\alpha}(\overline{Q})} := \sup_{ (x,t)\neq (x,s)} \frac{|u(x,t)-u(x,s)|}{|t-s|^\alpha}.
 $$
Note that if $u\in C^{0,\alpha}(\overline Q)$, then $u$ will be $\alpha$-H\"{o}lder continuous in $x$ and $\alpha/2$-H\"{o}lder continuous in $t$. 
Moreover, we define $C^{1,\alpha}(\overline{Q})$ as the set of all functions $u\in C(\overline{Q})$ such that
\begin{align*}
\|u\|_{C^{1,\alpha}(\overline{Q})}&:= \|u\|_{L^\infty(Q)} + \|\nabla u\|_{C^{0,\alpha}(\overline Q)}+ [u]_{C_t^{0,(1+\alpha)/2}(\overline{Q})} <\infty.
\end{align*}

 \subsection{Viscosity solutions and the Pucci class} 
Let $Q\subset \R^{n+1}$ be an open set. 

 \begin{defn}
 Given $u: Q\to \R$, we say that $\varphi\in C^2$ touches $u$ from above (resp. below) at $(x_0,t_0)\in Q$ if $u(x_0,t_0)=\varphi(x_0,t_0)$, and there is $r>0$ such that
 $$u(x,t) \leq \varphi(x,t) \quad (\hbox{resp.}~\geq) \quad \hbox{for all}~ (x,t)\in Q_r(x_0,t_0) \subseteq Q.$$
 \end{defn}

 \begin{defn} 
 We say that $u\in C(Q)$ is a viscosity subsolution (resp.~supersolution) to 
 $$\partial_t u - F(D^2 u)=0\quad  \text{ in } Q$$ 
 if whenever $\varphi\in C^2$ touches $u$ from above (resp.~below) at $(x_0,t_0)\in Q$, it holds that
 $$\partial_t \varphi(x_0,t_0) - F(D^2\varphi (x_0,t_0)) \leq 0 \quad (\hbox{resp.}~\geq).$$
We call $u$ a \textit{viscosity solution} if it is both a viscosity subsolution and a viscosity supersolution.
 \end{defn}

 \begin{defn}
     Given $f\in C(Q)$, we say that $u\in C(Q)$ belongs to the Pucci class in $Q$, and write, $u \in S(f)$ in $Q$, if $u$ satisfies
     $$
    \M^-(D^2 u) + f \leq \partial_t u \leq \M^+(D^2 u) + f 
     $$
     in the viscosity sense.
 \end{defn}

 \begin{rem}
     If $u$ is a viscosity solution to \eqref{eq:FBP}, then by \eqref{eq:ordering},
     $$
    \M^-(D^2 u) \leq  F^-(D^2 u) \leq \partial_t u \leq F^+(D^2 u)  \leq \M^+(D^2 u),
     $$
     in the viscosity sense.
     Hence, $u\in S(0)$ in $\Omega\times(0,T]$. In particular, the regularity theory for functions in the Pucci class holds. More precisely, $u$ is locally H\"older continuous in $\Omega\times(0,T]$; see \cite[Theorem~4.19]{WI92}.
 \end{rem}

\subsection{A parabolic monotonicity formula}

Monotonicity formulas are fundamental tools in the study of free boundary regularity. A celebrated formula for elliptic two-phase problems was introduced by Alt, Caffarelli, and Friedman, and was later extended to parabolic problems by Caffarelli and Kenig. For a detailed discussion, see \cite[Section~12.4]{CS05}. 

Next, we recall the parabolic formula.
For $x\in \Rn$ and $t>0$, the heat kernel is given by
$$
G(x,t)= \frac{1}{(4\pi t)^{n/2}} e^{-\frac{|x|^2}{4t}}.
$$
For a function $v$ and any $t>0$, we define
$$
I(t; v):= \frac{1}{t} \int_{-t}^0 \int_{\Rn} |\nabla v(x,s)|^2 G(x,-s)\,dx ds.
$$

\begin{thm}[The global formula] \label{thm:ACFglobal}
    Let $u_1$ and $u_2$ be two nonnegative continuous functions defined on the strip $\R^n \times [-1,0]$, and satisfying the following properties:
    \begin{enumerate}
        \item[$(i)$] $\partial_t u_i -\Delta u_i \leq 0$, for $i=1,2$;
        \item[$(ii)$] $u_1u_2\equiv0$;
        \item[$(iii)$] $u_1(0,0)=u_2(0,0)=0$.
    \end{enumerate}
    Assume that the $u_i$ have moderate growth at infinity, say
    $$
    \int_{B_R} u_i^2(x,-1)\, dx \leq C e^\frac{R^2}{(4+\vep)},
    $$
    for $R$ large and some $\vep>0$.
    Then for $0<t\leq 1$, the function
    $$
    J(t ; u_1,u_2) :=I(t; u_1) I(t; u_2)
    $$
    is monotone increasing in $t$.
\end{thm}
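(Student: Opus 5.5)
Following Caffarelli--Kenig (see \cite[Section~12.4]{CS05}), we compute the logarithmic derivative of $J$ and show it is nonnegative. By approximation (mollifying $u_i$, or replacing $u_i$ by $(u_i-\eta)^+$ smoothed and letting $\eta\to0$) we may assume enough regularity to differentiate under the integral. The moderate growth assumption, together with the subcaloric property, ensures that all the Gaussian-weighted integrals converge and that boundary terms at infinity vanish in the integrations by parts. This is the role of the exponent $R^2/(4+\vep)$, which is slightly weaker than the Gaussian decay $e^{-R^2/4}$ of the kernel.

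\textbf{Scaling reduction.} Write $I(t;v)$ after the change of variables $x=\sqrt t\,y$, $s=t\tau$. This gives
$$I(t;v)=\int_{-1}^0\int_{\Rn} |\nabla v_t(y,\tau)|^2 G(y,-\tau)\,dy\,d\tau,\qquad v_t(y,\tau)=t^{-1/2}v(\sqrt t y,t\tau).$$
It therefore suffices to show $\frac{d}{dt}\big|_{t=1}\log J\ge0$ for every pair satisfying the hypotheses, since each rescaled pair satisfies them too. The derivative at $t=1$ reduces to a boundary (time-slice) quantity. Namely, $\frac{d}{dt}\big(tI(t)\big)|_{t=1}=\int_{\Rn}|\nabla u_i(x,-1)|^2G(x,1)\,dx$. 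Hence
$$\frac{J'(1)}{J(1)}=\sum_{i=1,2}\Big(\frac{\int|\nabla u_i(\cdot,-1)|^2G(\cdot,1)}{I(1;u_i)}-1\Big).$$

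\textbf{Energy bound via subcaloricity.} Next we bound $I(1;u_i)$ from above. Since $\partial_t u_i-\Delta u_i\le0$ and $u_i\ge0$, the function $u_i^2$ satisfies $(\partial_t-\Delta)u_i^2\le-2|\nabla u_i|^2$. Integrating against the backward heat kernel $G(x,-s)$ (which solves the adjoint equation) over $\Rn\times(-1,0)$ gives
$$I(1;u_i)\le \tfrac12\int_{\Rn} u_i^2(x,-1)G(x,1)\,dx.$$
Here we use $u_i(0,0)=0$ to kill the contribution at $s=0$, since $G(\cdot,-s)\to\delta_0$. So it suffices to show
$$\sum_i \frac{\int|\nabla u_i|^2 G}{\tfrac12\int u_i^2 G}\ge 2$$
on the slice $t=-1$, with Gaussian measure $d\gamma=G(x,1)dx$.

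\textbf{Key inequality and main obstacle.} Let $\Omega_i=\{u_i(\cdot,-1)>0\}$, which are disjoint. Then $\int|\nabla u_i|^2d\gamma\ge \lambda(\Omega_i)\int u_i^2d\gamma$, where $\lambda(\Omega_i)$ is the first Dirichlet eigenvalue of the Ornstein--Uhlenbeck operator on $\Omega_i$ with respect to $\gamma$. The claim then becomes the Gaussian Friedland--Hayman-type inequality: for disjoint $\Omega_1,\Omega_2$, $\lambda(\Omega_1)+\lambda(\Omega_2)\ge \tfrac12\cdot 2\cdot\ldots$, normalized so that the sum is at least the value for two complementary half-spaces. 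This is exactly the Beckner--Kenig--Pipher inequality, proved via Gaussian symmetrization (Ehrhard): among sets of given Gaussian measure, half-spaces minimize the eigenvalue, and the half-space eigenvalue is convex in the boundary position, so the sum is minimized at $\gamma(\Omega_1)=\gamma(\Omega_2)=1/2$, where each eigenvalue equals $1/2$ in this normalization.

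This eigenvalue inequality is the main obstacle, and I would invoke it from the literature rather than reprove it. With it in hand, $J'(1)\ge0$, and the scaling argument gives monotonicity for all $0<t\le1$.
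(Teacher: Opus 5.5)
Your proposal is correct and is the Caffarelli--Kenig argument of \cite[Section~12.4]{CS05}, which the paper cites rather than reproves: you rescale to $t=1$, bound $I(1;u_i)\le\tfrac12\int u_i^2(x,-1)G(x,1)\,dx$ using $(\partial_t-\Delta)u_i^2\le-2|\nabla u_i|^2$, and close with the Beckner--Kenig--Pipher inequality, which in your normalization $d\gamma=G(x,1)\,dx$ should simply read $\lambda(\Omega_1)+\lambda(\Omega_2)\ge 1$ (each half-space $\{x\cdot e>0\}$ has $\lambda=\tfrac12$, with eigenfunction $(x\cdot e)^+$). One small correction: mollifying the $u_i$ would destroy $u_1u_2\equiv0$, which the eigenvalue step needs, but no smoothing is required, since $t\mapsto tI(t;u_i)$ is absolutely continuous with derivative $\int|\nabla u_i(x,-t)|^2G(x,t)\,dx$ for a.e.\ $t$, and that is all the argument uses.
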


\begin{rem} \label{rem:scaling}
    If $v_r(x,t)=r^{-1} v(rx,r^2t)$, then $J(t; (u_1)_r, (u_2)_r) = J(r^2 t; u_1, u_2)$.
\end{rem}

We will also need the case of equality in the monotonicity formula. For a proof, see for instance \cite[Proposition 5.3]{AS16} (see also \cite[Lemma 12.9]{CS05}).

\begin{prop} \label{prop:rigidity}
    Let $u_1$ and $u_2$ satisfy the assumptions of Theorem~~\ref{thm:ACFglobal}. Then $J(t; u_1, u_2)$ is a strictly positive constant if and only if the $u_i$ are two complementary linear functions, i.e., there are constants $a,b >0$ and a unit vector $e$ such that $u_1=a (x\cdot e)^+$ and $u_2=b (x\cdot e)^-$.
\end{prop}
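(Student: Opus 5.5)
The plan is to check the two implications separately. The ``if'' direction is a direct computation. The ``only if'' direction tracks the cases of equality in the proof of Theorem~\ref{thm:ACFglobal}.

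\emph{The ``if'' direction.} Let $u_1=a(x\cdot e)^+$ and $u_2=b(x\cdot e)^-$. These are time independent, convex in $x$, and satisfy $u_1u_2\equiv 0$ and $u_i(0,0)=0$; they have linear growth. Hence they satisfy the hypotheses of Theorem~\ref{thm:ACFglobal}. Since $|\nabla u_1|=a\,\chi_{\{x\cdot e>0\}}$ and $G(\cdot,-s)$ is radially symmetric with unit mass, every time slice gives $\int_{\{x\cdot e>0\}}G(x,-s)\,dx=\tfrac12$. Therefore $I(t;u_1)=a^2/2$ and $I(t;u_2)=b^2/2$, so $J\equiv a^2b^2/4>0$.

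\emph{The ``only if'' direction.} First reduce to $t=1$ by scaling: by Remark~\ref{rem:scaling}, if $J$ is constant on $(0,1]$, then it is constant for every rescaling $(u_i)_r$. Next recall how monotonicity is proved. Writing $\tfrac{d}{dt}\log J=\tfrac{d}{dt}\log I(t;u_1)+\tfrac{d}{dt}\log I(t;u_2)$, one bounds each term from below using three facts:
\begin{enumerate}[$(a)$]
\item the subcaloric inequality $\partial_t u_i-\Delta u_i\le 0$ and integration by parts against $G$;
\item a Cauchy--Schwarz inequality on each time slice $\{s=-t\}$ in Gaussian space $L^2(\R^n, G(\cdot,t)\,dx)$, which bounds $\int|\nabla u_i|^2G$ below by the first Dirichlet eigenvalue $\beta_i(t)$ of the Ornstein--Uhlenbeck operator on $\Omega_i(t)=\{u_i(\cdot,-t)>0\}$ times $\int u_i^2G/t$;
\item the Gaussian Friedland--Hayman (Beckner--Kenig--Pipher) inequality $\beta_1+\beta_2\ge 1$ for disjoint sets, after normalizing to unit variance.
\end{enumerate}
Combining these gives $t\,\tfrac{d}{dt}\log J\ge 2(\beta_1+\beta_2)-2\ge0$.

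If $J$ is a strictly positive constant, both sides vanish for a.e.\ $t$, so equality holds in each step for a.e.\ $t$. Equality in $(c)$ forces $\Omega_1(t),\Omega_2(t)$ to be complementary half-spaces $\{x\cdot e(t)>0\}$, $\{x\cdot e(t)<0\}$, each with $\beta_i=1/2$. Equality in $(b)$ forces $u_i(\cdot,-t)$ to be the first eigenfunction on its half-space, hence a multiple of $(x\cdot e(t))^\pm$. Equality in $(a)$ and in the accompanying homogeneity step forces $u_i(x,s)=|s|^{1/2}\phi_i(x/|s|^{1/2})$ and $\partial_tu_i=\Delta u_i$ in $\{u_i>0\}$. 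Together these give $u_1(x,s)=a(s)(x\cdot e(s))^+$. Because $(x\cdot e)^+$ is $1$-homogeneous, the parabolic self-similarity forces $a(s)$ to be constant. Then the heat equation in the positivity set forces $\partial_s e=0$, since $\Delta u_1=0$ there. Continuity in $s$ extends the conclusion from a.e.\ $s$ to all $s$, and $u_2$ is handled the same way. Positivity of $J$ gives $a,b>0$.

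\emph{Main obstacle.} The delicate step is the rigidity in $(c)$: equality in the Gaussian two-set eigenvalue inequality occurs only for half-spaces. I would obtain it through Ehrhard (Gaussian) symmetrization, using the equality case of the Gaussian isoperimetric inequality, as in \cite[Lemma~12.9]{CS05}. A secondary point is justifying the differentiation of $J$ and the integrations by parts for merely continuous subcaloric functions. For this I would use the moderate growth assumption, to make the Gaussian integrals converge, together with a standard mollification.
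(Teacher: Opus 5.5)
Your proposal is correct and is essentially the argument of \cite[Proposition~5.3]{AS16} (see also \cite{CS05}), which the paper cites in place of a proof: the ``if'' direction is a direct computation, and the ``only if'' direction tracks the equality cases in the monotonicity chain. These are: caloric in the positivity set, a first Gaussian eigenfunction on each slice, and equality in the Beckner--Kenig--Pipher inequality forcing complementary half-spaces through the origin. One small correction: the chain $(a)$--$(c)$ you describe contains no homogeneity step, so the self-similarity claim is unsupported; it is also unnecessary, because $\partial_s u_1=\Delta u_1=0$ in $\{x\cdot e(s)>0\}$ applied to $u_1=a(s)\,(x\cdot e(s))$ (with $a(s)e(s)=\nabla u_1$ smooth there) gives $a'(s)e(s)+a(s)e'(s)=0$, and $e'\perp e$ then yields both $a'=0$ and $e'=0$.
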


Finally, we need the localized version of the global monotonicity formula.

\begin{thm}[The local formula] \label{thm:ACFlocal}
    Assume that $u_1,u_2\in L^2(Q_1)$ satisfy the assumptions of Theorem~\ref{thm:ACFglobal} in $Q_1$. Let $\psi$ be a smooth cut-off function in $x$, i.e., $\psi\equiv 0$ outside $B_{2/3}$ and $\psi\equiv1$ in  $B_{1/2}$. Then if $w_i= u_i \psi$ and $J(t)=J(t; w_1,w_2)$,
    $$
    J(0^+) - J(t) \leq A e^{-C/t} \|u_1\|_{L^2(Q_1)}^2 \|u_2\|_{L^2(Q_1)}^2
    $$
    for some $A,C>0$, depending only on $n$.
\end{thm}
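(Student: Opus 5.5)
The plan is to reduce the local statement to the global one, Theorem~\ref{thm:ACFglobal}, applied to the truncated functions $w_i=u_i\psi$, keeping track of the error produced by the cut-off.

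\textbf{Step 1: error terms from the cut-off.} First compute
$$
\partial_t w_i-\Delta w_i=\psi(\partial_t u_i-\Delta u_i)-2\nabla u_i\cdot\nabla\psi-u_i\Delta\psi\le -2\nabla u_i\cdot\nabla\psi-u_i\Delta\psi=:h_i .
$$
The right-hand side is supported in the annulus $B_{2/3}\setminus B_{1/2}$. Hence $w_i$ are subcaloric near $x=0$ but not globally. The functions $w_i$ are compactly supported in $x$, so the growth condition is trivial. They still satisfy $w_1w_2\equiv0$ and $w_i(0,0)=0$.

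\textbf{Step 2: redo the derivative of $J$ with an error.} I would follow the proof of the global formula, as in \cite[Section~12.4]{CS05}, and compute $\frac{d}{dt}\log J(t)$. In the Caffarelli--Kenig proof one writes $t\,I'(t)$ in terms of integrals over the time slice $s=-t$ against $G(x,t)$. Subcaloricity enters only through an inequality of the form
$$
\int|\nabla w_i|^2G\le \int w_i(\partial_t-\Delta)\,\ldots \quad \text{(integration by parts against the kernel)},
$$
and the key estimate then uses the Gaussian eigenvalue inequality on complementary supports (Beckner--Kenig--Pipher) to get a nonnegative derivative. With $h_i$ present, the same computation gives
$$
\frac{d}{dt}J(t)\ge -E(t),
$$
where $E(t)$ collects terms like $\frac1t\int_{-t}^0\int |h_i|\,w_i\,G(x,-s)\,dx\,ds$ multiplied by the other factor $I(t;w_j)$. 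Since $h_i$ lives where $|x|\ge1/2$ and $|s|\le t$, one has $G(x,-s)\le C s^{-n/2}e^{-1/(16 s)}\le Ce^{-c/t}$ there. Also $|\nabla u_i|$ in the annulus is controlled by $\|u_i\|_{L^2(Q_1)}$ via Caccioppoli for nonnegative subcaloric functions, at the cost of shrinking slightly, which is why $\psi$ is supported in $B_{2/3}$. This gives $|E(t)|\le \frac{A}{t^{k}}e^{-c/t}\|u_1\|^2_{L^2}\|u_2\|^2_{L^2}$. The other factors $I(t;w_j)$ are bounded similarly by $\|u_j\|_{L^2(Q_1)}^2$, using interior energy bounds for subcaloric functions.

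\textbf{Step 3: integrate.} Integrating from $0^+$ to $t$ gives $J(0^+)-J(t)\le\int_0^t E\le Ae^{-C/t}\|u_1\|^2\|u_2\|^2$, after absorbing polynomial factors by decreasing $C$. The existence of $J(0^+)$ follows from the same inequality, because $J+\int_0^tE$ is monotone.

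\textbf{Main obstacle.} The hardest part is Step 2: bounding $I(t;w_j)$ uniformly as $t\to0$, so that the product rule does not produce an unbounded factor. Near $x=0$ this requires $I(t;u_j)$ to be bounded in terms of $\|u_j\|_{L^2}$. I would try to get this from the $L^\infty$ bound $\sup_{Q_{2/3}}u_j\le C\|u_j\|_{L^2(Q_1)}$ together with the estimate $\int|\nabla w|^2G\le \frac12\int\Delta(w^2)G$-type integration by parts and $w_j(0,0)=0$. If this cannot be made uniform, I would instead bound the logarithmic derivative, and use $J\le C\|u_1\|^2\|u_2\|^2$ from the $L^\infty$ bound.
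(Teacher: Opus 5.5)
Your proposal is correct and is essentially the standard Caffarelli--Kenig argument behind this statement; the paper does not reprove it but imports it from \cite[Section~12.4]{CS05}. In that argument the cut-off only creates error terms in $\{1/2\le|x|\le 2/3\}$, where $G(x,-s)\le Ce^{-c/t}$ for $-t\le s<0$, and one integrates the resulting almost-monotonicity inequality. Your ``main obstacle'' is not a real one: a uniform bound on $I(t;w_j)$ is false in general and not needed, because your own integration by parts (using $w_j(0,0)=0$) together with $\sup_{Q_{2/3}}u_j\le C\|u_j\|_{L^2(Q_1)}$ already gives $t\,I(t;w_j)\le C\|u_j\|_{L^2(Q_1)}^2$, and the extra powers of $1/t$ are absorbed by $e^{-c/t}$ exactly as in your Step~3. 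Two details to fix: when you derive $J'\ge -E$, truncate the Gaussian eigenvalues $\lambda_i$ of the time slices at $1$ (normalizing so that Beckner--Kenig--Pipher reads $\lambda_1+\lambda_2\ge 1$), so that the error is not multiplied by an unbounded $\lambda_i$. Also, your fallback bound $J\le C\|u_1\|^2\|u_2\|^2$ does not follow from the $L^\infty$ estimate, which only gives $J(t)\le Ct^{-2}\|u_1\|^2\|u_2\|^2$.
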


\begin{rem}  \label{rem:ordering}
In this work, the monotonicity formula will be fundamental for identifying blow-up limits. This is the key point where we use the ordering assumption
$$
F^-(M) \leq \tr(M)\leq F^+(M).
$$
Indeed, let $u$ be a viscosity solution to \eqref{eq:FBP}, and set $u_1=u^+$ and $u_2=u^-$. Then
$$
\partial_t u_1 = F^-(D^2 u_1) \leq \Delta u_1 \quad \text{ in } \{u_1>0\},
$$
and 
$$
\partial_t u_2 = - F^+(-D^2 u_2) \leq \Delta u_2 \quad \text{ in } \{u_2>0\},
$$
in the viscosity sense. In particular, since $u_1$ and $u_2$ are nonnegative and vanish continuously outside their respective phases, these inequalities extend across their zero sets, in the viscosity sense, and hence also in the distributional sense.
Therefore, $u_1$ and $u_2$ are subcaloric, and the parabolic monotonicity formula applies to this pair of functions.
\end{rem}

\section{Existence and Lipschitz regularity}\label{s:existreg}

We will prove existence of a (Lipschitz) viscosity solution of \eqref{eq:FBP} by first regularizing the problem, proving uniform estimates, and then passing to the limit. 

\subsection{Regularized problem} For $\vep>0$, let $h_\vep: \R\to [0,1]$ be a smooth approximation of $\chi_{[0, \infty)}$ satisfying $h_\vep(r) = \chi_{[0,\infty)}(r)$ for all $|r| > \vep$, and $h_\vep'\geq 0$. Consider the regularized operator $G_\vep : \S_n \times \R \to \R$ given by
$$
G_\vep(M, r ):= h_\vep(r) F^-(M) + (1-h_\vep(r)) F^+(M) \quad \text{ for }  M\in \S_n, \ r\in \R,
$$
and the corresponding $\vep$-problem:
\begin{equation} \label{eq:regularized}
\begin{cases}
    \partial_t u^\vep - G_\vep(D^2 u^\vep, u^\vep)=0 & \text{ in } \Omega \times (0,T],\\
    u^\vep(x,0) = f(x)  & \text{ in } \Omega,\\
    u^\vep(x,t) =g(x,t) & \text{ on } \partial \Omega \times (0,T].
    \end{cases}
\end{equation}

We list some useful properties satisfied by $G_\vep$. The proof is simple so we omit it here.
\begin{lem}[Properties of $G_\vep$] \label{lem:propG}
The following properties hold:
\begin{enumerate}[$(i)$]
\item Uniform ellipticity: For any $M,N\in \S_n$, with $N\geq 0$, and $r\in \R$,
    $$\M^-(N)\leq G_\varepsilon(M+N,r)-G_{\varepsilon}(M,r)\leq \M^+(N).$$
\item Monotonicity in $r$:  For any $M\in \S_n$ and $r\leq s$,
$$
G_\vep(M,r) \geq G_\vep(M, s).
$$
\item Ordering: For any $M\in \S_n$ and $r\in \R$,
$$
    F^-(M) \leq G_\vep(M,r) \leq F^+(M).
    $$
\end{enumerate}
\end{lem}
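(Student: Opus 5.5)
The three properties in Lemma~\ref{lem:propG} follow by writing $G_\vep(\cdot,r)$ as a convex combination of $F^-$ and $F^+$ with weight $h:=h_\vep(r)\in[0,1]$. We check each property directly from the structural assumptions $(i)$ and $(iii)$ on $F^\pm$.

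For uniform ellipticity, fix $r$ and $M,N\in\S_n$ with $N\ge 0$. Then
$$
G_\vep(M+N,r)-G_\vep(M,r)=h\big(F^-(M+N)-F^-(M)\big)+(1-h)\big(F^+(M+N)-F^+(M)\big).
$$
Each bracket lies in $[\M^-(N),\M^+(N)]$ by assumption $(i)$. Since $h$ and $1-h$ are nonnegative and sum to $1$, the convex combination lies in the same interval.

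For the ordering, we have $F^-(M)\le F^+(M)$ by \eqref{eq:ordering}. Any convex combination $hF^-(M)+(1-h)F^+(M)$ with $h\in[0,1]$ therefore lies between $F^-(M)$ and $F^+(M)$.

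For monotonicity in $r$, rewrite
$$
G_\vep(M,r)=F^+(M)-h_\vep(r)\big(F^+(M)-F^-(M)\big).
$$
The factor $F^+(M)-F^-(M)$ is nonnegative by \eqref{eq:ordering}, and $h_\vep$ is nondecreasing since $h_\vep'\ge0$. Hence for $r\le s$ we get $h_\vep(r)\le h_\vep(s)$, and so $G_\vep(M,r)\ge G_\vep(M,s)$.

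None of these steps is a real obstacle; the only point to notice is that monotonicity in $r$ relies on $F^-\le F^+$ together with $h_\vep'\ge0$. This is exactly the sign convention that makes $G_\vep$ nonincreasing in $u$, i.e.\ proper, which is what the comparison principle for \eqref{eq:regularized} requires.
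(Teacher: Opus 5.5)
Your proof is correct and matches the argument the paper has in mind; the paper omits the proof as simple. Writing $G_\vep(\cdot,r)$ as the convex combination $h_\vep(r)F^-+(1-h_\vep(r))F^+$ with $h_\vep(r)\in[0,1]$ gives $(i)$ and $(iii)$ at once, and rewriting it as $F^+-h_\vep(r)(F^+-F^-)$, using $F^-\le F^+$ and $h_\vep'\ge 0$, gives $(ii)$.
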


 We will show that there is a viscosity solution $u^\vep$ with Lipschitz norm independent of $\vep.$ To prove existence, we only require the boundary data to be continuous. 

For simplicity, write
$$
Q_T := \Omega \times (0,T].
$$

\begin{thm}[Existence $\vep$-problem] \label{thm:existencereg}
    Let $\vep>0$. Let $f: \overline\Omega \to \R$ and $g: \partial \Omega\times [0,T]\to \R$ be continuous functions satisfying $f(x)=g(x,0)$, for all $x\in \partial \Omega$.
    Then there exists a viscosity solution $u^\vep \in C(\overline Q_T)$ of the $\vep$-problem \eqref{eq:regularized}. In particular,
    $$
     F^-(D^2 u^\vep) \leq  \partial_t u^\vep \leq  F^+(D^2u^\vep) \quad \text{ in } Q_T,
    $$
    in the viscosity sense.

\end{thm}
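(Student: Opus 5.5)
We construct $u^\vep$ by Perron's method for the equation $\partial_t u - G_\vep(D^2u,u)=0$. The first step is a comparison principle for \eqref{eq:regularized}: if $v$ is a viscosity subsolution and $w$ a viscosity supersolution in $Q_T$, both continuous on $\overline Q_T$, with $v\le w$ on $\partial_p Q_T$, then $v\le w$ in $Q_T$. By Lemma~\ref{lem:propG}, $G_\vep$ is uniformly elliptic in $M$ and nonincreasing in $r$, so the operator $(M,r)\mapsto -G_\vep(M,r)$ is proper in the sense of \cite{CIL92}. We also need continuity in $(M,r)$: for fixed $r$, $G_\vep(\cdot,r)$ is Lipschitz in $M$ with constant controlled by $\Lambda$, and $|G_\vep(M,r)-G_\vep(M,s)|\le \|h_\vep'\|_\infty |r-s|\,|F^+(M)-F^-(M)|\le C_\vep|r-s|\,\|M\|$. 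Since the dependence on $r$ is only monotone, not strict, we replace $v$ by $v_\eta := v - \eta/(T-t)$ (or $v-\eta t$). Then $v_\eta$ is a strict subsolution, because $v_\eta\le v$ and monotonicity give $G_\vep(M,v_\eta)\ge G_\vep(M,v)$. We then run the standard doubling of variables with the parabolic Ishii lemma (Theorem 8.3 in \cite{CIL92}). The matrix inequality from Ishii's lemma, together with ellipticity, controls the $M$-terms. The mixed term coming from the $r$-dependence has the form $(h_\vep(v(x))-h_\vep(w(y)))(F^-(X)-F^+(X))$ and must be handled carefully, since $\|X\|$ may blow up like $1/\alpha$ in the penalization. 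I expect this to be the main obstacle. The plan is to split according to the sign of $v(x)-w(y)$: at a positive maximum of $v-w$ we have $v(x)>w(y)$, so monotonicity gives $G_\vep(X,v(x))\le G_\vep(X,w(y))$. This reduces the problem to comparing $G_\vep(X,w(y))$ with $G_\vep(Y,w(y))$ at the same $r$, where the ellipticity $X\le Y$ suffices. So no bound on $\|X\|$ is needed. Letting $\eta\to0$ gives comparison.

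Next, barriers. For each boundary point $(x_0,t_0)\in\partial_pQ_T$ and each $\delta>0$ we build an explicit subsolution and supersolution attaining the data up to $\delta$. By Lemma~\ref{lem:propG}(iii), any function $\phi$ with $\partial_t\phi-\M^-(D^2\phi)\le0$ is a subsolution for every value of $r$, and any $\psi$ with $\partial_t\psi-\M^+(D^2\psi)\ge0$ is a supersolution. So it suffices to take the standard Pucci barriers. On the lateral boundary we use the exterior ball condition of the smooth domain $\Omega$. The barrier is built from $|x-y_0|^{-p}$ with $p$ large, depending on $n,\lambda,\Lambda$, plus a term $K(t-t_0)^2$ and the modulus of continuity of $g$. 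At the bottom we use $f(x_0)\pm\delta\pm K|x-x_0|^2\pm K' t$, using continuity of $f$ and the compatibility $f=g$ on $\partial\Omega\times\{0\}$ near corner points. Global bounds come from the constants $\pm\max(\|f\|_\infty,\|g\|_\infty)$, which are themselves a subsolution and a supersolution.

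Perron's method then applies. Let $u^\vep$ be the supremum of all subsolutions lying below the global supersolution and attaining the data below on $\partial_pQ_T$. The usual bump construction shows that its upper semicontinuous envelope is a subsolution and its lower semicontinuous envelope is a supersolution. The barriers force both envelopes to equal the data on $\partial_pQ_T$, and comparison yields $(u^\vep)^*\le(u^\vep)_*$. Hence $u^\vep\in C(\overline Q_T)$ is a viscosity solution. Finally, the inequalities $F^-(D^2u^\vep)\le\partial_tu^\vep\le F^+(D^2u^\vep)$ follow directly. If $\varphi$ touches $u^\vep$ from above at a point, the subsolution inequality gives $\partial_t\varphi\le G_\vep(D^2\varphi,u^\vep)\le F^+(D^2\varphi)$ by Lemma~\ref{lem:propG}(iii). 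The supersolution inequality gives the bound with $F^-$ in the same way.
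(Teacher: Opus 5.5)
Your proposal is correct and is essentially the paper's argument: Perron's method for the proper operator $G_\vep$, using Pucci-type sub/supersolutions that are admissible for every value of $r$ because $F^-\le G_\vep\le F^+$, with the final inequalities read off from Lemma~\ref{lem:propG}(iii). The paper differs only in packaging. It cites \cite[Theorem~8.2]{CIL92} for comparison, which is stated for usc/lsc functions; that is the form you actually need for the Perron envelopes, and your doubling argument gives it verbatim. Instead of local barriers, it uses the global solutions $\underline u,\bar u$ of the Pucci problems \eqref{eq:subsol}--\eqref{eq:supersol}, which already attain the data on $\partial_pQ_T$ and are reused later for the boundary Lipschitz bound, at the cost of assuming those Pucci problems are solvable, whereas your version is self-contained.
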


We need the following comparison principle given in \cite[Theorem~8.2]{CIL92}; see also \cite[Theorem~IV.1]{IL90}. For the sake of clarity, we state it in the form we will use it.

\begin{lem}[Comparison principle] \label{lem:comparison}
    Let $G : \S_n \times  \R \to \R $ be a continuous operator satisfying:
     \begin{enumerate}[$(i)$]
     \item $G$ is degenerate elliptic: For all $M,N\in \S_n$ with $M\leq N$ and $r\in \R$,
     $$G(M,r) \leq G(N,r).$$
     \item $G$ is proper: For all $M\in \S_n$ and $r\leq s$,
     $$G(M,r)\geq G(M,s).$$
     \end{enumerate}         
        Let $\Omega$ be a bounded open set in $\Rn$ and $T>0$. Consider the equation
        \begin{equation} \label{eq:comparison} 
    \partial_t w - G(D^2 w, w)=0 \quad \text{ in } Q_T:=\Omega\times (0,T].
\end{equation}
        Assume that $u$ is a viscosity subsolution to \eqref{eq:comparison} and $v$ is a viscosity supersolution to \eqref{eq:comparison}. If $u \leq v$ on $\partial_p Q_T$, then $u\leq v$ in $Q_T$.
\end{lem}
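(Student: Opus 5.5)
The plan is the classical doubling-of-variables argument of Crandall--Ishii--Lions, made strict by a small time penalization. We use properness to absorb the zero-order term and degenerate ellipticity to compare the second-order jets. Throughout, $u$ is taken upper semicontinuous and $v$ lower semicontinuous on $\overline Q_T$, hence both are bounded there, and $u\le v$ on $\partial_p Q_T$.

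\emph{Step 1 (strict subsolution).} For $\eta>0$ set $u_\eta(x,t):=u(x,t)-\eta t$. If $\varphi$ touches $u_\eta$ from above at $(x_0,t_0)$, then $\varphi+\eta t$ touches $u$ from above there. Hence
\[
\partial_t\varphi+\eta-G(D^2\varphi,u(x_0,t_0))\le 0 .
\]
Since $u_\eta\le u$, properness gives $G(D^2\varphi,u_\eta(x_0,t_0))\ge G(D^2\varphi,u(x_0,t_0))$. Therefore
\[
\partial_t\varphi-G(D^2\varphi,u_\eta(x_0,t_0))\le-\eta ,
\]
so $u_\eta$ is a strict viscosity subsolution. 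It still satisfies $u_\eta\le v$ on $\partial_p Q_T$. It suffices to show $u_\eta\le v$ in $Q_T$ for every $\eta>0$ and then let $\eta\to0$.

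\emph{Step 2 (doubling variables).} Suppose, for contradiction, that $\theta:=\sup_{Q_T}(u_\eta-v)>0$. For $\alpha>0$ consider on $\overline Q_T\times\overline Q_T$
\[
\Phi_\alpha(x,t,y,s)=u_\eta(x,t)-v(y,s)-\frac{|x-y|^2}{2\alpha}-\frac{|t-s|^2}{2\alpha}.
\]
Let $(x_\alpha,t_\alpha,y_\alpha,s_\alpha)$ be a maximum point of $\Phi_\alpha$. The standard penalization lemma gives $(|x_\alpha-y_\alpha|^2+|t_\alpha-s_\alpha|^2)/\alpha\to0$. Along a subsequence both points converge to a common $(\hat x,\hat t)$ with $u_\eta(\hat x,\hat t)-v(\hat x,\hat t)\ge\theta>0$. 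Because $u_\eta\le v$ on $\partial_p Q_T$ and by semicontinuity, $(\hat x,\hat t)\notin\partial_p Q_T$. So for small $\alpha$ both points lie in $\Omega\times(0,T]$. The top time $t=T$ is harmless, since viscosity inequalities for parabolic problems hold up to the terminal time.

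\emph{Step 3 (theorem on sums and conclusion).} Apply the parabolic theorem on sums \cite[Theorem~8.3]{CIL92}. It yields $a\in\R$, $p=(x_\alpha-y_\alpha)/\alpha$, and $X,Y\in\S_n$ with $X\le Y$, such that $(a,p,X)$ lies in the closed parabolic superjet of $u_\eta$ at $(x_\alpha,t_\alpha)$ and $(a,p,Y)$ lies in the closed subjet of $v$ at $(y_\alpha,s_\alpha)$. The time components coincide because the penalization is symmetric in $t-s$. By continuity of $G$, the viscosity inequalities pass to closed jets:
\[
a-G(X,u_\eta(x_\alpha,t_\alpha))\le-\eta,\qquad a-G(Y,v(y_\alpha,s_\alpha))\ge0 .
\]
Subtracting gives $G(X,u_\eta(x_\alpha,t_\alpha))-G(Y,v(y_\alpha,s_\alpha))\ge\eta$.

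On the other hand, $\Phi_\alpha\ge$ its value at the diagonal near $\theta$, so $u_\eta(x_\alpha,t_\alpha)>v(y_\alpha,s_\alpha)$. Degenerate ellipticity and properness then give
\[
G(X,u_\eta(x_\alpha,t_\alpha))\le G(Y,u_\eta(x_\alpha,t_\alpha))\le G(Y,v(y_\alpha,s_\alpha)),
\]
which contradicts the previous inequality.

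\emph{Main obstacle.} The only nontrivial ingredient is the parabolic theorem on sums, which we quote from \cite{CIL92} rather than reprove. The remaining care lies in checking that the maximum points stay off the parabolic boundary. This uses the semicontinuity of $u$ and $v$ up to $\overline Q_T$ together with the boundary inequality $u\le v$ on $\partial_p Q_T$.
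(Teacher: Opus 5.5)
Your argument is correct and is the standard doubling-of-variables proof behind \cite[Theorem~8.2]{CIL92}, which the paper cites rather than proves. The only differences from that proof are cosmetic: you penalize with $-\eta t$ instead of $-\varepsilon/(T-t)$, and you double the time variable as well. One small fix is needed on the citation. Since you double time, the right tool is the elliptic theorem on sums \cite[Theorem~3.2]{CIL92}, applied in $\R^{n+1}$ on the locally compact set $\Omega\times(0,T]$, after which you discard the time--time and mixed entries of the Hessians; \cite[Theorem~8.3]{CIL92} is stated with a single common time variable and for $\hat t<T$.
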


\begin{rem}
The operator $G_\vep$ satisfies properties $(i)$ and $(ii)$. Hence, the comparison principle applies to the regularized equation in \eqref{eq:regularized}.
\end{rem}

\begin{proof}[Proof of Theorem~\ref{thm:existencereg}]
    Let $\vep>0$. First, we construct a suitable subsolution and supersolution to \eqref{eq:regularized}. Consider the solutions to the Cauchy-Dirichlet problems
        \begin{equation} \label{eq:subsol}
           \begin{cases}
        \partial_t \underline u - \M^-(D^2 \underline u) =0 & \text{ in } \Omega\times (0,T],\\
        \underline u(x,0)=f(x) & \text{ in }  \Omega,\\
        \underline u(x,t)= g(x,t) & \text{ on } \partial \Omega \times(0,T],
    \end{cases}
    \end{equation}
    and
    \begin{equation} \label{eq:supersol}
            \begin{cases}
        \partial_t \bar u - \M^+(D^2 \bar u) =0 & \text{ in } \Omega\times (0,T],\\
        \bar u(x,0)=f(x) & \text{ in }  \Omega,\\
        \bar u(x,t)= g(x,t) & \text{ on } \partial \Omega \times(0,T].
    \end{cases}
    \end{equation}
    By \eqref{eq:ordering} and Lemma~\ref{lem:propG}, $(iii)$, we have
    $$
     \partial_t \underline u = \M^-(D^2 \underline u)  \leq F^-(D^2 \underline u)\leq G_\vep(D^2 \underline u, \underline u) \quad \text{ in } Q_T.
    $$
    Hence, $\underline u$ is a subsolution to \eqref{eq:regularized}. Similarly, $\bar u$ is a supersolution to \eqref{eq:regularized}.
   Since the comparison principle holds for \eqref{eq:regularized}, by a standard application of Perron's method, there is a continuous viscosity solution $u^\vep$ to \eqref{eq:regularized}. 
    Moreover,
    $$
     F^-(D^2 u^\vep) \leq  \partial_t u^\vep \leq  F^+(D^2u^\vep) \quad \text{ in } Q_T,
    $$
    in the viscosity sense. 
    \end{proof}

 \begin{thm}[Uniform Lipschitz estimate] \label{thm:uniformest}
    Let $\vep>0$ and $L_0>0$. Assume that $f: \overline\Omega \to \R$ and $g: \partial \Omega\times [0,T]\to \R$ are $C^{1,\alpha}$-functions, for some $\alpha \in (0,1)$, satisfying
    $$
    \|f\|_{C^{1,\alpha}(\overline\Omega)} + \|g\|_{C^{1,\alpha}(\partial\Omega\times[0,T])}\leq L_0.
    $$
     If $u^\vep$ is a viscosity solution of \eqref{eq:regularized}, then $u^\vep\in C^{0,1}(\overline\Omega\times[0,T])$,  and
    $$
    \|u^\vep\|_{C^{0,1}(\overline\Omega\times[0,T])} \leq C_0,
    $$
    where $C_0>0$ depends only on $n$, $\lambda$, $\Lambda$, $\alpha$, $L_0$, $\Omega$, and $T$.
\end{thm}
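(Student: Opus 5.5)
The plan has four steps: a uniform $L^\infty$ bound, spatial Lipschitz bounds at the lateral and bottom boundary via Pucci barriers, propagation into the interior by the comparison principle applied to translates, and finally the $C^{1/2}$ estimate in time. The key structural fact is that the regularized equation \eqref{eq:regularized} is invariant under translations in $x$ and $t$. It is also proper (Lemma~\ref{lem:propG}), so the comparison principle (Lemma~\ref{lem:comparison}) applies to $u^\vep$ and to any translate of it.

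\emph{Step 1 (sup bound).} The functions $\underline u$ and $\bar u$ from \eqref{eq:subsol}--\eqref{eq:supersol} are a sub- and supersolution with the same data. Comparison gives $\underline u\le u^\vep\le \bar u$. The maximum principle then yields $\|u^\vep\|_\infty\le \|f\|_\infty+\|g\|_\infty\le L_0$.

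\emph{Step 2 (boundary Lipschitz bound).} Fix a point $(x_0,t_0)$ on the lateral boundary. Extend $g$ to a $C^{1,\alpha}$ function $\tilde g$ in a neighbourhood with controlled norm, e.g. by the solution of a Pucci problem with $C^{1,\alpha}$ global regularity. We want barriers $\tilde g\pm \Phi$ with $\Phi(x_0,\cdot)=0$ and $\Phi\ge C|x-x_0|$ near $x_0$.
\begin{itemize}
\item Since $\Omega$ is smooth, take an exterior ball $B_\rho(y_0)$ touching $\partial\Omega$ at $x_0$.
\item Use the classical radial Pucci barrier $\Phi(x)=K\big(\rho^{-p}-|x-y_0|^{-p}\big)$ with $p$ large, so that $\M^+(D^2\Phi)\le -c<0$ in the annulus.
\item Since $\tilde g\in C^{1,\alpha}$, its Hessian only enters through the $C^{1,\alpha}$-regularity of the Pucci solution. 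The cleaner route is to take $w^\pm$ solving $\partial_t w-\M^\pm(D^2w)=0$ with data $f,g$. These enjoy boundary $C^{1,\alpha}$ estimates (Wang \cite{WII92}), hence $|w^\pm(x,t)-g(x_0,t)|\le C|x-x_0|$.
\item Since $w^-=\underline u\le u^\vep\le \bar u=w^+$ with equal boundary values, we get $|u^\vep(x,t)-u^\vep(x_0,t)|\le C|x-x_0|$.
\end{itemize}
Similarly, near $t=0$ the same sandwich gives $|u^\vep(x,t)-f(x)|\le C\sqrt t$, because $w^\pm$ are $C^{1,\alpha}$ in the parabolic sense, hence parabolically Lipschitz up to $t=0$. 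These constants are independent of $\vep$.

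\emph{Step 3 (interior spatial Lipschitz bound).} Fix $h\in\Rn$ with $|h|$ small. Apply comparison to $u^\vep(x+h,t)+C|h|$ versus $u^\vep(x,t)$ on $(\Omega\cap(\Omega-h))\times(0,T]$. The translate is again a solution, and adding a positive constant keeps it a supersolution because $G_\vep$ is nonincreasing in $r$. Step 2 and the $C^{1,\alpha}$ bound on $f$ give $u^\vep(x+h,t)+C|h|\ge u^\vep(x,t)$ on the parabolic boundary. At lateral boundary points $x$ or $x+h$ lies on $\partial\Omega$, and we connect through Step 2 with the boundary gradient bound of $g$. Comparison gives $|u^\vep(x+h,t)-u^\vep(x,t)|\le C|h|$ everywhere.

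\emph{Step 4 (time regularity).} For $s>0$, compare $u^\vep(x,t+s)$ with $u^\vep(x,t)$ on $\Omega\times(0,T-s]$. The parabolic boundary differences are at most $C\sqrt s$: at the bottom by Step 2, and on the lateral boundary since $g\in C^{1,\alpha}$ gives $C^{(1+\alpha)/2}$ in $t$. Comparison with constants $\pm C\sqrt s$ then yields the $C^{1/2}$ bound in $t$.

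The paper hints at an alternative using a global heat-equation solution, which suggests the bottom estimate is more delicate. I expect the main obstacle to be Step 2 near $t=0$ and at the corner $\partial\Omega\times\{0\}$, i.e. securing an $\vep$-independent $\sqrt t$ modulus from the initial data. If the Pucci sandwich is not available with global $C^{1,\alpha}$ up to the corner, I would instead build explicit barriers at each $x_0\in\Omega$ of the form
$$
f(x_0)+\nabla f(x_0)\cdot(x-x_0)\pm\big(A|x-x_0|^{1+\alpha}+Bt\big)\pm\Theta(x-x_0,t),
$$
with $\Theta$ a global caloric-type profile of parabolic homogeneity one. Ellipticity (via the Pucci bounds) would then give sub/supersolution properties independent of $\vep$.
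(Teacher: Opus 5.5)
Your proof is correct, and Steps 1--3 are essentially the paper's Step 1. The lateral-boundary bound comes from the sandwich $\underline u\le u^\vep\le\bar u$ and the global parabolic $C^{0,1}$ bound for the Pucci solutions, which the paper also takes as known. The interior bound comes from comparing $u^\vep$ with its spatial translate plus $C|h|$, using that $G_\vep$ is nonincreasing in $r$.

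The time estimate is where you genuinely differ. You translate in $t$: $u^\vep(\cdot,\cdot+s)$ solves the same equation on $\Omega\times(0,T-s]$, and $u^\vep\pm C\sqrt s$ are a super- and subsolution by properness. The parabolic-boundary inequalities follow from two facts:
\begin{itemize}
\item $|u^\vep(\cdot,s)-f|\le C\sqrt s$, from the same sandwich, since the parabolic $C^{0,1}$ norm of $\bar u,\underline u$ contains $1/2$-H\"older continuity in time up to $t=0$;
\item the $C^{(1+\alpha)/2}$ time regularity of $g$.
\end{itemize}
The paper instead fixes a slice $\{\tau=s\}$ and compares $u^\vep$, viewed only as a subsolution of $\partial_\tau-F^+$, with the explicit convex barrier $\Psi=\Phi+B(\tau-s)^{1/2}$. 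Here $\Phi$ solves $\partial_\tau\Phi=\Lambda\Delta\Phi$ with initial datum $u^\vep(x,s)+B|x-y|$. Convexity gives $F^+(D^2\Psi)\le\Lambda\Delta\Psi$, the spatial Lipschitz bound at time $s$ controls the bottom, and Lemma~\ref{lem:globalsol} computes $\Psi(x,t)$.

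Your route is shorter and dispenses with Appendix~\ref{a:heateqnsol}. However, it leans on time-translation invariance, the properness of $G_\vep$, and the behaviour of $\bar u,\underline u$ at $t=0$. The paper's route is the classical local mechanism ``Lipschitz in space implies $C^{1/2}$ in time.'' It needs only the inequalities $F^-(D^2u)\le\partial_t u\le F^+(D^2u)$, comparison with a smooth strict supersolution, and time regularity of the lateral data. So it would apply verbatim to any spatially Lipschitz function in that class, with no comparison principle for the full equation.

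Consequently, the heat-kernel barrier is not a sign that the estimate at $t=0$ is delicate. The fallback barriers in your last paragraph are also not needed: the corner $\partial\Omega\times\{0\}$ is covered by the same black box (global parabolic $C^{0,1}$ regularity of the Pucci solutions with $C^{1,\alpha}$ data) that both you and the paper already use.
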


\begin{rem} \label{rem:data}
The $C^{1,\alpha}$ regularity assumption on the boundary data in Theorem \ref{thm:uniformest} cannot be relaxed to merely Lipschitz. Indeed, even for harmonic functions, Lipschitz boundary data does not guarantee Lipschitz regularity up to the boundary.  For instance, let $\eta\in C_c^\infty(\R)$ be a cut-off function with $\eta\equiv1$ on $[-1,1]$, and 
consider in $\R^2_+ = \{y>0\}$ the harmonic extension of the Lipschitz function $f(x) = |x|\eta(x)$,
\[
u(x,y) = \frac{1}{\pi}\int_\R \frac{y\,|t| \eta(t)}{(x-t)^2+y^2}\,\mathrm{d}t.
\]
Then $u=f$ on $\{y=0\}$, but $u\notin C^{0,1}(\overline{\R^2_+})$, since evaluating at $x=0$ and substituting $s = t^2$,
\[
\partial_y u(0,y) \sim \int_0^1 \frac{s - y^2}{(s+y^2)^2}\,\mathrm{d}s = \int_0^{1/y^2} \frac{\tau-1}{(\tau+1)^2}\,\mathrm{d}\tau \sim \log(1/y^2) \to \infty.
\]
\end{rem}

\begin{proof}[Proof of Theorem \ref{thm:uniformest}]
Let $\vep>0.$ First, observe that, by the maximum principle, 
\begin{equation*} 
    \|u^\vep\|_{L^{\infty}(\Omega\times (0,T])}\leq \|f\|_{L^{\infty}(\Omega)}+\|g\|_{L^{\infty}(\partial \Omega \times (0,T])}\leq L_0.\medskip
\end{equation*}

    We now divide the proof into two steps:\medskip

    \textbf{Step 1:} \textit{Lipschitz regularity in space.} Fix $t\in [0,T]$. We will see that, for any $x, y\in \overline \Omega$,
    \begin{equation} \label{eq:lipspace}
            |u^\vep(x,t)- u^\vep(y,t)| \leq C_0 |x-y|,
    \end{equation}   
    where $C_0>0$ does not depend on $\vep$. To see this, we distinguish two main cases:\medskip

    \underline{Case 1:} Let $x\in \overline\Omega$ and $y\in \partial \Omega$. Let $\bar u$ be the solution to \eqref{eq:supersol}. Note that
    $$
    \partial_t u^\vep - F^+(D^2 u^\vep) \leq 0 \leq  \partial_t \bar u - F^+(D^2 \bar u)  \quad \text{ in } Q_T,
    $$
    and $u^\vep= \bar u$ on $\partial_p Q_T$. By the comparison principle for $F^+$, we have $u^\vep \leq \bar u$ in $Q_T$. Moreover, we know that $\bar u \in C^{0,1}(\overline Q_T)$, with
    $$
    \|\bar u \|_{C^{0,1}(\overline Q_T)} \leq C(\|f\|_{C^{1,\alpha}(\overline\Omega)} + \|g\|_{C^{1,\alpha}(\partial\Omega\times[0,T])})\leq CL_0=: C_0,
    $$
    where $C_0>0$ depends only on $n$, $\lambda$, $\Lambda$, $\alpha$, $L_0$, $\Omega$, and $T$.
    Hence,
    $$
    u^\vep(x,t)-u^\vep(y,t) \leq \bar u(x,t) - \bar u(y,t) \leq [\bar u]_{C^{0,1}(\overline Q_T)} |x-y| \leq C_0 |x-y|.
    $$
    The lower bound can be proved similarly using the solution $\underline u$ to \eqref{eq:subsol}.\medskip

    \underline{Case 2:} Let $x,y \in \Omega$, and write $y=x+h$, for some $h\in \Rn$. Define
    $$
    \Omega_h:= \Omega\cap (\Omega-h)= \big\{ x\in \Omega : x+h \in \Omega \big\}.
    $$
    Let $C_0$ be the constant in Case~1. For $x\in \Omega_h$ and $t \in [0,T]$, consider the functions
    $$
    v(x,t):= u^\vep(x+h, t), \qquad w(x,t):= u^\vep(x,t) + C_0 |h|.
    $$
    We claim that 
    \begin{equation} \label{eq:orderedfunc}
    v\leq w\quad  \text{ on } \overline \Omega_h\times [0,T].
    \end{equation} 
    Indeed, since $G_\vep$ does not depend on $x$, 
        $$
    \partial_t v - G_\vep(D^2 v, v) =0 \quad \text{ in } \Omega_h \times (0,T],
    $$
    in the viscosity sense. Moreover, by Lemma~\ref{lem:propG}, $(ii)$,
    \begin{align*}
     \partial_t w - G_\vep(D^2 w, w) &=   \partial_t u^\vep - G_\vep(D^2 u^\vep, u^\vep + C_0 |h| )\\
     & \geq  \partial_t u^\vep -   G_\vep(D^2 u^\vep, u^\vep) \\
     & =0
     \quad \text{ in } \Omega_h \times (0,T],
    \end{align*}
    in the viscosity sense. Now, on the parabolic boundary:
 
 \begin{enumerate}[$(i)$]
 \item At $t=0$ and for $x\in \Omega_h$:
 \begin{align*}
 v(x,0) - w(x,0) & = u^\vep(x+h, 0) - u^\vep(x,0) - C_0 |h|\\
 & = f(x+h) - f(x) - C_0 |h|\\
 & \leq (L_0-C_0)|h| \leq 0,
 \end{align*}
because $[ f ]_{C^{0,1}(\Omega)}\leq L_0$ and we can choose $C_0>L_0$.\medskip
 
 \item For $x\in \partial \Omega_h$ and $0<t\leq T$: either $x\in \partial \Omega$ or $x+h \in \partial \Omega$. By Case~1,
 \begin{align*}
 v(x,t) - w(x,t) & = u^\vep(x+h, t) - u^\vep(x,t) - C_0 |h|\leq0.
 \end{align*}
 \end{enumerate}
  Therefore, $v\leq w$ on $\partial_p \big(\Omega_h \times (0,T]\big).$\medskip
  
The comparison principle (Lemma~\ref{lem:comparison}) yields \eqref{eq:orderedfunc}, thus proving the claim.
The lower bound follows similarly. Hence,
  $$
  |u^\vep(x,t)-u^\vep(y,t)|\leq C_0|x-y|.
  $$
  
     \medskip
    \textbf{Step 2:} \textit{$1/2$-H\"older regularity in time.} Fix $x\in \overline \Omega$. We will see that, for any $s,t \in [0,T]$,
    $$
    |u^\vep(x,s)- u^\vep(x,t)| \leq C_0 |s-t|^{1/2},
    $$
    where $C_0>0$ is independent of $\vep$.
    
     Let $(x,s),(x,t)\in Q_T$ with $s<t$. It is enough to prove the estimate when $t-s \leq 1$. 
     Fix $B$ to be chosen large.  For $(y,\tau)\in \Rn\times (s,\infty)$, let 
    $$
    \Phi(y,\tau):= \frac{1}{(4\pi\Lambda(\tau-s))^{n/2}} \int_{\Rn} e^{-\frac{|z-y|^2}{4\Lambda(\tau-s)}} \big(u^\vep(x,s) + B |x-z|\big)\, dz.
    $$
    Then $\Phi$ is a solution to the global problem
    $$
    \begin{cases}
        \partial_\tau \Phi - \Lambda \Delta \Phi =0 & \text{ in } \R^n \times (s,\infty),\\
        \Phi(y,s)= u^\vep(x,s) + B |x-y| & \text{ for } y \in \Rn.
    \end{cases}
    $$
    Define the rescaled function $\Phi_\Lambda(y,\tau):=\Phi(y,\tau/\Lambda)$. Then $\Phi_\Lambda$ satisfies the heat equation in $\Rn\times (\Lambda s, \infty)$.
    By Lemma~\ref{lem:globalsol} and Remark~\ref{rem:globalsol},
    $$
    \Phi_\Lambda(x,\tau)= u^\vep(x,s) + C(n) B (\tau- \Lambda s)^{1/2}, \quad \text{for all } \tau >\Lambda s,
    $$
    for some dimensional constant $C(n)>0$. Rescaling back, we get
    \begin{equation} \label{eq:estphi}
    \Phi(x,\tau)= u^\vep(x,s) + C(n) B \Lambda^{1/2} (\tau- s)^{1/2}, \quad \text{for all } \tau > s.
    \end{equation}
    
    Now, define the barrier
    $$
    \Psi(y,\tau) := \Phi(y,\tau) + B (\tau-s)^{1/2}.
    $$
    Note that $\Psi$ is a supersolution since
    $$
    \partial_\tau \Psi - \Lambda \Delta \Psi = \frac{B}{2}(\tau-s)^{-1/2} > 0 \quad \text{ in } \Omega \times (s,t).
    $$
    Moreover, $\Phi$ is convex since the initial data is convex, and thus,
    $$
    \partial_\tau \Psi - F^+(D^2 \Psi) \geq  \partial_\tau \Psi -  \Lambda \Delta \Psi > 0 \quad \text{ in } \Omega \times (s,t).
    $$
    We will see that $u^\vep\leq \Psi$ on $\partial_p(\Omega \times [s,t])$. Indeed, by \eqref{eq:lipspace}, we have 
    $$
    u^\vep(y,s) \leq u^\vep(x,s) + C_0 |x-y| \leq u^\vep(x,s) + B |x-y| = \Phi(y,s) = \Psi(y,s), \quad \text{ for all } y \in \Omega,
    $$
    choosing $B\geq C_0$. Hence, $u^\vep \leq \Psi$ on the bottom $\Omega \times \{s\}$. 
    
    To prove the bound on the lateral boundary, note that $\Phi$ is monotone increasing in time since $\partial_\tau \Phi = \Lambda \Delta \Phi \geq 0$. By assumption, the boundary data $g$ is $C^{1,\alpha}$ in the parabolic metric, for some $\alpha \in (0,1)$, which means $g$ is $C^{(1+\alpha)/2}$ in time. Then for any $y \in \partial\Omega$ and $\tau\in [s,t]$, 
    \begin{align*}
        \Psi(y,\tau) &\geq \Phi(y,s) + B(\tau-s)^{1/2}\\
        &= u^\vep(x,s) + B|x-y| + B(\tau-s)^{1/2}\\
        &\geq g(y,s) - C_0|x-y| + B|x-y| + B(\tau-s)^{1/2}\\
        &\geq g(y,\tau) - L_0(\tau-s)^{(1+\alpha)/2} - C_0|x-y| + B|x-y| + B (\tau-s)^{1/2}\\
        &\geq g(y,\tau) = u^\vep(y,\tau),
    \end{align*} 
    where the last inequality holds taking $B = \max\{C_0,L_0\}$. Therefore, $u^\vep \leq \Psi$ on $\partial\Omega \times [s,t]$. By the comparison principle for $F^+$, we get
    $$
    u^\vep \leq \Psi \quad \text{ in }  \Omega \times [s,t].
    $$
    Applying the estimate at the point $(x,t)$, and using \eqref{eq:estphi}, we see that
    $$
        u^\vep(x,t) \leq \Psi(x,t) = \Phi(x,t) + B(t-s)^{1/2} = u^\vep(x,s) + B \big(C(n) \Lambda^{1/2}  + 1\big) |t-s|^{1/2}.
    $$
    The lower bound follows similarly taking $-u^\vep$.
    \end{proof}

    \subsection{Proof of Theorem~\ref{thm:main1}} 
    By Theorem~\ref{thm:existencereg}, for any $\vep>0$, there exists a continuous viscosity solution $u^\vep$ to the $\vep$-problem \eqref{eq:regularized}, satisfying the viscosity inequalities
    $$
     F^-(D^2 u^\vep) \leq \partial_t u^\vep  \leq F^+(D^2u^\vep).
    $$
    Moreover, by Theorem~\ref{thm:uniformest}, we have 
    $$
    \|u^\vep\|_{C^{0,1}(\overline\Omega\times[0,T])} \leq C_0,
    $$
    where $C_0>0$ is independent of $\vep$. By compactness, up to a subsequence, $u^\vep\to u$ uniformly on $\overline\Omega\times[0,T]$, where $u$ is Lipschitz continuous on $\overline\Omega\times[0,T]$, and satisfies the original problem \eqref{eq:FBP} in the viscosity sense by \cite[Theorem 6.1]{CKS00}.

    Since $f^\pm\not\equiv 0$ and $g^\pm \not\equiv 0$, by the continuity of $u$ up to the boundary, we have $u^\pm \not\equiv0$.\qed

    \section{Non-degeneracy at regular points}\label{s:nondegen}
     
In this section, we prove the non-degeneracy result at regular points (see Definition~\ref{def:regpts}).

\begin{lem}\label{lem:nondegeneracy}
Let $u$ be a Lipschitz solution of \eqref{eq:FBP}, with Lipschitz constant $L$. Let $(x_0,t_0)\in \Gamma$ be a regular point. Then there exist $N>0$ and $r_0>0$, depending on $(x_0,t_0)$, such that 
$$
N\, r \leq \sup_{Q_r(x_0,t_0)} u^\pm \leq \sqrt{2} L\, r, \qquad \text{ for all } 0<r<r_0.
$$
\end{lem}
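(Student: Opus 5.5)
The upper bound is immediate from the Lipschitz continuity. Since $u(x_0,t_0)=0$ and every $(x,t)\in Q_r(x_0,t_0)$ satisfies $d_p((x,t),(x_0,t_0))\le (r^2+r^2)^{1/2}=\sqrt2\, r$, we get
$$|u(x,t)|\le L\sqrt2\, r,$$
and hence $\sup_{Q_r(x_0,t_0)}u^\pm\le\sqrt2 L r$. The content of the lemma is therefore the lower bound, and it must hold for \emph{both} phases. By the definition of a regular point, for each small $r$ at least one of $u^+$ or $u^-$ reaches $Mr$ on $Q_r(x_0,t_0)$. So it suffices to show that linear growth of one phase at a given scale forces linear growth of the other phase at a comparable scale.

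I would argue by contradiction via blow-ups. Suppose there are $r_j\to0$ with $\sup_{Q_{r_j}}u^-\le \frac1j r_j$, say. Consider the rescalings $u_j(x,t)=r_j^{-1}u(x_0+r_jx,t_0+r_j^2t)$. They are uniformly Lipschitz, vanish at the origin, and solve the same problem, since the operators are 1-homogeneous. Passing to a subsequence, $u_j\to u_0$ locally uniformly, and by stability of viscosity solutions $u_0$ is again a viscosity solution of the global problem in $\R^n\times(-\infty,0]$. From the assumption we get $u_0^-\equiv0$ in $Q_1$. Applying the regular-point condition at scale $r_j$ gives $\sup_{Q_1}|u_j|\ge M$, so $\sup_{Q_1}u_0^+\ge M$ and $u_0\ge0$ in $Q_1$. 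Moreover, $0$ lies on $\partial\{u_0>0\}$ in the limit, or at least $u_0(0,0)=0$.

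Now $u_0\ge0$ in $Q_1$ satisfies the global inequality $\partial_t u_0-F^-(D^2u_0)\ge0$. It is therefore a nonnegative supersolution of a uniformly parabolic equation (indeed $u_0\in S(0)$, with $\partial_tu_0\ge\M^-(D^2u_0)$). The strong minimum principle then applies: since $u_0(0,0)=0$ at the top of the cylinder, $u_0\equiv0$ in $Q_1$, or at least in the backward region $B_1\times(-1,0]$, which is all we need. This contradicts $\sup_{Q_1}u_0^+\ge M$.

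The symmetric case, where $u^+$ is small, uses the other inequality $\partial_t u-F^+(D^2u)\le0$ applied to $-u_0\ge0$ in the same way. This gives the lower bound with some $N>0$ for all $0<r<r_0$.

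The main obstacle is that the contradiction sequence is only along $r_j\to0$, while the statement needs a uniform $N$ for all small $r$. The contradiction argument handles exactly this: the negation of ``there exist $N$ and $r_0$'' produces such a sequence. A second subtlety is making the strong minimum principle rigorous for viscosity supersolutions in the Pucci class. It should follow from the weak Harnack inequality in \cite{WI92}, which for $u_0\ge0$ bounds an $L^\varepsilon$ average of $u_0$ on an earlier cylinder by $\inf$ near $(0,0)$, equal to $0$. This forces $u_0=0$ on earlier times, and propagating this covers $Q_1$ (using the interior of a slightly larger cylinder).
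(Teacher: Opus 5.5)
Your proof is correct, but it takes a different route from the paper's. The paper argues directly at a fixed scale. Assuming $\sup_{Q_{2\rho}}u^-<2\varepsilon\rho$, it picks $(y,s)\in\overline Q_\rho(x_0,t_0)$ with $u(y,s)\ge M\rho$ and sets $v=u+2\varepsilon\rho\ge0$ in $Q_{2\rho}(x_0,t_0)$. It then applies the parabolic Harnack inequality for the Pucci class to compare $v(y,s)\ge M\rho$ with $v(x_0,t_0)=2\varepsilon\rho$. Because Harnack needs a time lag, the paper splits into the cases $s\le t_0-\rho^2/2$ and $s>t_0-\rho^2/2$. In the second case it first moves the point back to $\tau=s-\delta^2\rho^2$ using the $C^{1/2}$-in-time bound, and then pays a $\delta$-dependent Harnack constant $c_1$.

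Your compactness argument replaces this bookkeeping with the qualitative strong minimum principle for nonnegative supersolutions of $\partial_t w-\mathcal M^-(D^2w)\ge 0$. This is the same tool the paper itself invokes in Case~2 of the proof of Lemma~\ref{lem:blowups}. The time-lag issue disappears because the minimum principle kills $u_0$ on all of $B_1\times(-1,0)$. Continuity of the uniformly Lipschitz limit then extends this to the closure, including a possible maximizing point on the top time level; this is exactly the role the time estimate plays in the paper's case (ii). You should state that final continuity step explicitly, since the supremum $M$ is only attained on $\overline Q_1$.

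What the paper's route buys is an explicit constant $N=N(M,L,n,\lambda,\Lambda)$ at every scale, with no limits taken. Yours produces $N$ only by contradiction and, as written, depending on $u$. Running the same argument over a sequence of solutions with fixed $L$ and $M$, rather than a sequence of scales of one solution, would recover that uniform dependence, since only compactness at unit scale is used.
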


\begin{proof}
Let $u$ be a Lipschitz solution of \eqref{eq:FBP}, with Lipschitz constant $L$.
Let $(x_0,t_0)\in \Gamma$ be a regular point. By definition, there are $M>0$ and $r_0>0$, depending on $(x_0,t_0)$, such that
\begin{equation}\label{eq:regpt}
\sup_{Q_r(x_0,t_0)} |u|\geq Mr, \qquad \text{for all } 0<r<r_0.
\end{equation}
Since $u$ is Lipschitz and $u(x_0,t_0)=0$, we have
$$
\sup_{Q_r(x_0,t_0)} u^\pm \leq \sqrt{2} L r, \qquad \text{for all } 0<r<r_0.
$$
Hence, we only need to prove the lower bound. Assume, for the sake of contradiction, that the conclusion fails for $u^-$ or $u^+$. Without loss of generality, we may assume that it fails for $u^-$, i.e., for any $\vep < \frac{M}{2}$, there exists $0<\rho<\frac{r_0}{2}$, such that
$$
 \sup_{Q_{2\rho}(x_0,t_0)} u^- < \vep (2\rho)<M\rho.
$$
Since $|u|=\max\{u^+,u^-\}$, by \eqref{eq:regpt}, we must have
$$
 \sup_{Q_\rho(x_0,t_0)} u^+ \geq  M \rho.
$$
Hence, there is $(y,s)\in \overline Q_\rho(x_0,t_0)\cap\{u>0\}$, such that
$$
u^+(y,s) \geq M\rho.
$$

Now, for $\vep >0$, let 
$$v:= u + 2\vep \rho\quad \text{ in } Q_{2\rho}(x_0,t_0) \subset \Omega \times (0,T].$$ 
Then $v\geq 0$ in $Q_{2\rho}(x_0,t_0)$, $v(x_0,t_0)=2\vep\rho$, and $v(y,s)\geq M\rho.$
Moreover, $v$ satisfies
$$
 F^-(D^2 v)\leq  \partial_t v \leq F^+(D^2 v)  \quad \text{ in } Q_{2\rho}(x_0,t_0),
$$
in the viscosity sense, so $v\in S(0)$. We distinguish two cases:

\begin{enumerate}
    \item[$(i)$] If $t_0-\rho^2\leq s\leq t_0-\rho^2/2$, then by the parabolic Harnack inequality for the Pucci class, applied to $v$ in $Q_{2\rho}(x_0,t_0)$, there is some $c_0=c_0(n,\lambda,\Lambda)>0$ such that
    $$
    M\rho\leq v(y,s) \leq c_0 \inf_{Q_{\rho/2}(x_0,t_0)} v\leq c_0 v(x_0,t_0)= 2c_0 \vep \rho,
    $$
    which is a contradiction, taking $\vep < M/(2c_0)$.\medskip
    
    \item[$(ii)$] If $t_0-\rho^2/2< s\leq t_0$, then let $\tau := s -\delta^2\rho^2$, for some $0<\delta<1/2$ to be chosen small. Since $u$ is Lipschitz (in the parabolic sense), then so is $v$, with the same constant $L$, and
    $$
    v(y, \tau) \geq v(y,s) - L |\tau-s|^{1/2} \geq M \rho - L \delta \rho=(M-\delta L)\rho>\frac{M}{2}\rho,
    $$
    provided $\delta < M/(2L)$.
    By the parabolic Harnack inequality, there is $c_1(n,\lambda,\Lambda,\delta)>0$ such that
    $$
    \frac{M}{2}\rho < v(y,\tau) \leq c_1 \inf_{Q_{\delta \rho/2}(x_0,t_0)} v
    \leq c_1 v(x_0,t_0)= 2c_1 \vep \rho,
    $$
    which is a contradiction, taking $\vep < M/(4c_1)$.
\end{enumerate}

Therefore, there are $N>0$ and $r_0>0$, depending on $(x_0,t_0)$, such that
$$
 \sup_{Q_r(x_0,t_0)} u^- \geq N r, \quad \text{ for all } 0<r<r_0.
$$

\end{proof}

\section{Blow-up analysis}\label{s:blowup}

Recall that given a function $v: \R^n \times [-1,0]\to \R$, we define
    $$
    I(t; v) := \frac{1}{t}  \int_{-t}^0 \int_{\Rn} |\nabla v(x,s)|^2 G(x,-s)\, dx ds, \quad \text{ for } 0<t\leq 1,
    $$
where $G$ is the heat kernel. Moreover, we denote 
$$
v_r(x,t):= \frac{1}{r} v(rx,r^2 t), \quad \text{ for } 0< r < 1.
$$

The following lemma classifies all possible blow-up limits. 

\begin{lem}[Classification of blow-ups] \label{lem:blowups}
    Let $u$ be a Lipschitz solution of \eqref{eq:FBP} in $Q_1.$
Assume that $(0,0)\in \Gamma$. Consider the blow-up sequence $u_r$, for $0<r<1/2$. Then $u_r$ admits a subsequence converging uniformly in compact subsets of $\Rn\times(-\infty,0]$ and, for any convergent subsequence $u_{r_j}$, there exist a constant $a \geq 0$ and a unit vector $e$ such that 
$$
u_{r_j} \to a (x\cdot e), \quad \text{ as } j \to \infty.
$$
Moreover, the following dichotomy holds:
\begin{enumerate}[$(i)$]
\item If $(0,0)$ is a regular point, then $a>0$.
\item If $(0,0)$ is a singular point, then $a=0$. In particular, the blow-up limit is unique.
\end{enumerate}
\end{lem}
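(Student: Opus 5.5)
The plan is to combine compactness with the parabolic ACF formula (Theorem~\ref{thm:ACFglobal}, Theorem~\ref{thm:ACFlocal}) and its rigidity statement (Proposition~\ref{prop:rigidity}).

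\textbf{Compactness and equation of the limit.} Since $u$ is Lipschitz with $u(0,0)=0$, the rescalings $u_r$ satisfy $|u_r(x,t)|\le L(|x|^2+|t|)^{1/2}$ and are uniformly Lipschitz on compact subsets of $\R^n\times(-\infty,0]$. Arzel\`a--Ascoli then gives a locally uniformly convergent subsequence $u_{r_j}\to u_0$. The equations in \eqref{eq:FBP} are invariant under the parabolic scaling, by the $1$-homogeneity $(ii)$. Stability of viscosity solutions therefore shows that $u_0$ is a global Lipschitz solution of the same problem. In particular $u_0^\pm$ are subcaloric, by Remark~\ref{rem:ordering}.

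\textbf{Limit of the monotonicity formula.} Set $u_1=u^+\psi$, $u_2=u^-\psi$, and let $\Phi(r):=J(r^2;u_1,u_2)$. By Theorem~\ref{thm:ACFlocal} the limit $\Phi(0^+)=:\ell$ exists, and $\Phi$ is bounded above by $CL^4$. By Remark~\ref{rem:scaling}, $J(t;(u_1)_{r},(u_2)_{r})=\Phi(r\sqrt t)\to \ell$ for every fixed $t>0$.

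The main obstacle is to pass this to the limit and obtain $J(t;u_0^+,u_0^-)=\ell$ for all $t$. Uniform convergence does not give $\nabla u_{r_j}^\pm\to\nabla u_0^\pm$ in $L^2(G\,dx\,ds)$. To get strong convergence I would use the subcaloric inequalities in divergence form:
\begin{itemize}
\item Test $\partial_t w-\Delta w\le 0$, for $w=u_{r_j}^\pm$, against $w\,\eta$ with a space-time cutoff $\eta$.
\item This bounds $\int |\nabla w|^2\eta$ by terms involving only $w$, $w^2$ and derivatives of $\eta$, which converge by uniform convergence. Hence $\limsup\int|\nabla u_{r_j}^\pm|^2\eta\le \int |\nabla u_0^\pm|^2\eta$, with an error controlled by the corresponding identity for the limit.
\end{itemize}
Equality for the limit needs some care. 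Inside each phase the limit solves a uniformly parabolic equation, and $C^{1,\alpha}$ estimates give local uniform gradient convergence on compact subsets of $\{u_0\ne0\}$. It then suffices to show that the energy of $u_{r_j}^\pm$ in a thin neighbourhood of $\{u_0=0\}$ is small. For this I would use the energy inequality above with $w=(u_{r_j}^\pm-\delta)^+$, plus the Lipschitz bound. Lower semicontinuity gives the reverse inequality. The Gaussian tails are handled by the uniform Lipschitz bound, since $|\nabla u_r|\le L$ a.e. Dominated convergence on the complement then finishes, and we conclude $J(t;u_0^+,u_0^-)\equiv \ell$.

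\textbf{Classification and dichotomy.} We split according to the value of $\ell$.
\begin{itemize}
\item If $\ell>0$, Proposition~\ref{prop:rigidity} gives $u_0=a(x\cdot e)^+-b(x\cdot e)^-$ with $a,b>0$. The viscosity inequalities force $a=b$: if $a>b$, the limit has a concave kink along the hyperplane, and a test function with large negative second derivative in $e$ touches it from above there. Then $\partial_t\varphi-F^+(D^2\varphi)\le0$ fails, since $F^+(D^2\varphi)\le\M^+(D^2\varphi)\to-\infty$. The case $a<b$ is symmetric, using $F^-$ from below.
\item If $\ell=0$, then one of $u_0^\pm$ has $\nabla u_0^\pm\equiv0$ on $\R^n\times(-t,0)$, so it is constant in $x$. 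By continuity and $u_0(0,0)=0$, together with the sign, that part vanishes near $t=0$. Then $u_0$ has a sign, say $u_0\ge0$, and it solves $\partial_tu_0=F^-(D^2u_0)$ where positive, while $u_0\in S(0)$ globally with linear growth. A Liouville argument for the Pucci class with linear growth (interior $C^{1,\alpha}$ estimates applied to $u_0$, which is in $S(0)$ everywhere) shows $u_0$ is affine in $x$ and independent of $t$. A nonnegative affine function vanishing at $0$ is $0$. So $u_0=a(x\cdot e)$ with $a=0$.
\end{itemize}

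For the dichotomy: at a regular point, Lemma~\ref{lem:nondegeneracy} passes to the limit, giving $\sup_{Q_1}u_0^\pm\ge N>0$, hence $a>0$. At a singular point, $\liminf_r r^{-1}\sup_{Q_r}|u|=0$. Comparing each blow-up $a(x\cdot e)$ with the sequence of scales realizing the liminf does not immediately give $a=0$ for every subsequence. I would argue instead through the monotone quantity: $\Phi$ and $\ell$ do not depend on the subsequence. Along the liminf scales the limit is $0$, so $\ell=0$. Then every blow-up falls in the case $\ell=0$, so it vanishes, and the blow-up is unique.
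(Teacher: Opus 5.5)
You follow the paper's architecture: compactness, strong $L^2_{\rm loc}$ convergence of gradients (small energy near the zero set plus interior $C^{1,\alpha}$ convergence away from it), $J(t;u_0^+,u_0^-)\equiv\ell$ with $\ell$ independent of the subsequence, rigidity and $a=b$ when $\ell>0$, and the dichotomy read off from $\ell$. The genuine gap is the degenerate case $\ell=0$. This case carries both halves of the dichotomy, since at a regular point, excluding $\ell=0$ through non-degeneracy also relies on the implication $\ell=0\Rightarrow u_0\equiv0$.

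First, once $u_0^-=c(t)$ (say), continuity, $c\ge0$ and $c(0)=0$ do not make $c$ vanish near $t=0$; think of $c(t)=|t|$. The missing input is the equation in the phase. On the open set $\Rn\times\{c>0\}=\{u_0<0\}$ one has $D^2u_0=0$, hence $\partial_tu_0=F^+(0)=0$, so $c$ is locally constant on $\{c>0\}$. Since $c(0)=0$, every component of $\{c>0\}$ has a right endpoint at which continuity forces $c=0$, a contradiction. This gives $c\equiv0$ on the whole strip, which you need for any global argument afterwards; it is the step the paper uses in both of its cases.

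Second, the Liouville step is not justified. The Pucci class $S(0)$ only carries Krylov--Safonov H\"older estimates and the Harnack inequality, not $C^{1,\alpha}$ estimates, since it contains solutions of linear equations with merely measurable coefficients. So membership in $S(0)$ with linear growth does not force $u_0$ to be affine. What closes the argument, as in the paper, is the strong minimum principle. The conditions $u_0\ge0$ on $\Rn\times(-T,0]$, $u_0(0,0)=0$ and $\partial_tu_0-\M^-(D^2u_0)\ge\partial_tu_0-F^-(D^2u_0)\ge0$ give $u_0\equiv0$ there, for every $T$. Alternatively, a nonnegative $u_0$ automatically satisfies the $F^-$ subsolution inequality at its zeros. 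Indeed, a test function touching from above at a zero has $\nabla\varphi=0$, $D^2\varphi\ge0$ and $\partial_t\varphi\le0$. So $u_0$ solves $\partial_tu_0=F^-(D^2u_0)$ globally, and only then are $C^{1,\alpha}$ estimates and Liouville available.

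Two smaller points. In the energy step, the Caccioppoli inequality for $(u^\pm_{r_j}-\delta)^+$ bounds the energy on $\{u^\pm_{r_j}>\delta\}$, not on the layer $\{0<u^\pm_{r_j}<\delta\}$. The layer estimate $\iint_{\{0<u^\pm_{r_j}<\delta\}}|\nabla u^\pm_{r_j}|^2\eta\le C\delta$ comes from testing $\mu_j:=(\Delta-\partial_t)u^\pm_{r_j}\ge0$ against $\min\{u^\pm_{r_j},\delta\}\,\eta$; this is the paper's truncation $\mathcal T_{\vep,h}$. Your first idea in fact suffices if you keep the measure term. With $v_j:=u^\pm_{r_j}$ one has $\iint|\nabla v_j|^2\eta=\tfrac12\iint v_j^2(\partial_t\eta+\Delta\eta)-\int v_j\eta\,d\mu_j$. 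Since $\mu_j\rightharpoonup\mu_0$ as Radon measures while $v_j\eta\to u_0^\pm\eta$ uniformly, every term on the right converges. This gives strong convergence without any interior $C^{1,\alpha}$ input.

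Finally, for $a>b$ the kink of $a(x\cdot e)^+-b(x\cdot e)^-$ is convex, not concave. It is therefore touched from below, contradicting the $F^-$ supersolution inequality, while $a<b$ is the concave case handled with $F^+$ from above. Since you treat both cases, this is only a mislabeling.
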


As an immediate consequence, we obtain spatial differentiability of $u$ at singular points. 
\begin{cor}
If $(0,0)$ is a singular point, then $u$ is differentiable in $x$ at $(0,0)$, and 
$$|\nabla u(0,0)|=0.$$
\end{cor}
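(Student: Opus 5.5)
The corollary follows directly from Lemma~\ref{lem:blowups}$(ii)$; we only need to turn uniqueness of the blow-up into a first-order expansion. Suppose $(0,0)$ is a singular point. We claim that
$$
\sup_{Q_r}|u| = o(r) \quad \text{as } r\to 0.
$$
From the claim, $|u(x,0)|\le \sup_{Q_{|x|}}|u| = o(|x|)$. Since $u(0,0)=0$, this says $u(\cdot,0)$ is differentiable at $x=0$ with $\nabla u(0,0)=0$. In fact the claim gives more, namely $u(x,t)=o(|x|+|t|^{1/2})$ in the parabolic sense.

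We prove the claim by contradiction. Suppose it fails. Then there are $\eta>0$ and radii $r_j\to 0$ with
$$
\sup_{Q_{r_j}}|u|\ge \eta r_j .
$$
Rescaling to $u_{r_j}(x,t)=r_j^{-1}u(r_jx,r_j^2t)$ gives $\sup_{Q_1}|u_{r_j}|\ge \eta$ for every $j$. These functions are uniformly Lipschitz in the parabolic sense, with the constant $L$ of $u$, and they vanish at the origin. By Arzel\`a--Ascoli, a subsequence converges uniformly on compact subsets of $\Rn\times(-\infty,0]$. Lemma~\ref{lem:blowups}$(ii)$ says the limit is $a(x\cdot e)$ with $a=0$, that is, identically zero. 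Uniform convergence on $\overline{Q_1}$ would then force $\sup_{Q_1}|u_{r_j}|\to 0$, which contradicts the lower bound $\eta$.

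The only point needing care is that Lemma~\ref{lem:blowups} must apply to every convergent subsequence, not just one. The lemma is stated for any convergent subsequence, so this holds, and all the real difficulty sits inside the classification lemma itself. If we wanted to prove it here instead, the definition of a singular point only says that for each $M$ the regular-point bound fails at some arbitrarily small scales. To get $a=0$ we would use the ACF functional: on blow-ups along the relevant scales, $J$ is constant. Lemma~\ref{lem:nondegeneracy} combined with Proposition~\ref{prop:rigidity} rules out a two-plane profile with positive slopes unless the point is regular. At singular points the limiting $J(0^+)$ must vanish, and one phase degenerating forces the other to degenerate as well, via the Harnack argument of Lemma~\ref{lem:nondegeneracy}. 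We take the lemma as given, so the corollary itself needs no further work.
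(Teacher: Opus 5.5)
Your argument is correct and is essentially the paper's own proof. You argue by contradiction with scales $r_j\to 0$ satisfying $\sup_{Q_1}|u_{r_j}|\ge\eta$, extract a locally uniformly convergent subsequence, apply Lemma~\ref{lem:blowups}$(ii)$ (which holds for every convergent subsequence) to see that the limit is $0$, and then read off $u(x,0)=o(|x|)$. Your closing sketch of the lemma's proof is not needed and does not quite match the paper, which gets $J(0^+)=0$ from one specially chosen sequence of scales and concludes with the strong maximum principle rather than a Harnack argument.
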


\begin{proof}
By Lemma~\ref{lem:blowups}, it follows that
$$
\lim_{r\to0} \sup_{Q_1} |u_r| =0.
$$
Otherwise, there exist $\vep>0$ and a sequence $r_j\to0$ such that 
$$\sup_{Q_1} |u_{r_j}|\geq\varepsilon,$$ 
but, up to a subsequence,  $u_{r_j}\to0$ locally uniformly, which is a contradiction. Therefore,
$$
 \sup_{(x,t)\in Q_r} \frac{|u(x,t)-u(0,0)|}{r} \to 0, \qquad \text{ as } r\to 0.
$$
In particular, $u(x,0)=o(|x|)$ as $x\to0$, so $u$ is differentiable in $x$ at $(0,0)$, and 
$$|\nabla u(0,0)|=0.$$
\end{proof}

We will see in Section~\ref{sec:regularity} that $u\in C^{1,\alpha}$ at regular points, thus proving Theorem~\ref{thm:main2}.

\begin{proof}[Proof of Lemma~\ref{lem:blowups}]
    We divide the proof of the lemma into two main steps.\medskip

    \textbf{Step 1:} \textit{Energy of the blow-up limits.}  
    Let $D:=\Rn\times(-\infty,0]$. Since $u$ is Lipschitz in $Q_1$, the family $\{u_{r}\}$ is Lipschitz, uniformly in $r$, on compact subsets of $D$. Hence, by the Arzel\`a--Ascoli theorem, after passing to a subsequence, we have
$$
\begin{array}{cl}
u_{r_j}\to u_0 & \quad \text{locally uniformly on } D,\\[0.2cm]
\nabla u_{r_j}\rightharpoonup \nabla u_0 &\quad  \text{weakly in } L^2_{\rm loc}(D).
\end{array}
$$

Since the equations are invariant under the Lipschitz scaling, and viscosity solutions are stable under uniform convergence, $u_0$ is a global Lipschitz viscosity solution of the same problem as $u$. 
Also, $u_{r_j}(0,0)=0$ because $(0,0)\in \Gamma$, so $$u_0(0,0)=0.$$

Our goal is to show that the energy of $u_0$ is constant.
We prove this in two substeps:\medskip

    \underline{Step 1.1:} \textit{Convergence of the energy.} 
Let $w :=u\psi$,  where $0\leq \psi\leq 1$ is a smooth cut-off function in $x$, i.e., $\psi\equiv 0$ outside $B_{2/3}$ and $\psi\equiv1$ in  $B_{1/2}$.
Note that 
    $$
    w_{r_j}(x,t)=(u \psi)_{r_j}(x,t) = u_{r_j}(x,t) \psi(r_jx).
    $$
   We aim to prove that, for all $0<t\leq 1$,
    \begin{equation} \label{eq:convenergy}
    \lim_{j\to \infty} J(t; w_{r_j}^+, w_{r_j}^-) = J(t; u_0^+, u_0^-).
    \end{equation}
  Since $\psi$ is bounded in $C^1(\R^n)$, and $\psi(r_j x)\to \psi(0)=1$, as $j\to \infty$, by Step 1,
    $$
\begin{array}{cl}
w_{r_j}\to u_0 & \quad \text{locally uniformly in } D,\\[0.2cm]
\nabla w_{r_j}\rightharpoonup \nabla u_0 &\quad  \text{weakly in } L^2_{\rm loc}(D).
\end{array}
$$
If $\nabla w_{r_j}\to \nabla u_0$ a.e. in $D$, then using  $|\nabla w_{r_j}|\leq C$ and 
$$
\frac{1}{t} \int_{-t}^0 \int_{\Rn} G(x,-s)\, dxds=1,
$$
 by the dominated convergence theorem,
\begin{align*}
\lim_{j\to \infty} I(t; w_{r_j}^\pm) & =  \frac{1}{t} \lim_{j\to \infty}  \int_{-t}^0 \int_{\Rn}  |\nabla w_{r_j}^\pm|^2 G(x,-s)\, dxds \\
&= \frac{1}{t} \int_{-t}^0 \int_{\Rn}  |\nabla u_0^\pm|^2 G(x,-s)\, dxds\\
&= I(t; u_0^\pm).
\end{align*}
Therefore,
$$
  \lim_{j\to \infty} J(t; w_{r_j}^+, w_{r_j}^-) =   \lim_{j\to \infty}  I(t; w_{r_j}^+)  I(t; w_{r_j}^-) =  I(t; u_0^+) I(t; u_0^-) =J(t; u_0^+, u_0^-),
$$
thus proving \eqref{eq:convenergy}.\medskip

Now, let $Q$ be a bounded parabolic cylinder in the interior of $D$. We will show that 
    \begin{equation}\label{eq:strongL2}
        \lim_{j\to \infty} \|\nabla w_{r_j}\|_{L^2(Q)} = \|\nabla u_0\|_{L^2(Q)}.
    \end{equation}
Note that weak convergence and \eqref{eq:strongL2} imply that $\nabla w_{r_j}\to \nabla u_0$ strongly in $L^2(Q)$, and thus, up to a subsequence, almost everywhere. Hence, it remains to establish \eqref{eq:strongL2}.\medskip

Fix $h>0$. We claim that:
\begin{enumerate}[$(i)$]
    \item \textit{Small energy near the 0-level set:} For $j$ sufficiently large,
     $$\iint_{Q\cap \{ |w_{r_j}|< h\} }|\nabla w_{r_j}|^2\, dxdt\leq Ch,$$
     where $C$ is independent of $j$, and $h$. \medskip
     
    \item \textit{Convergence away from the 0-level set:} As $j\to \infty$,
     $$\iint_{Q\cap \{ |w_{r_j}|\geq h\} }|\nabla w_{r_j}|^2\, dxdt\to \iint_{Q\cap \{ |u_0| \geq h\} }|\nabla u_0|^2\, dxdt.$$
\end{enumerate}

We begin proving $(i)$. For $Q$ fixed, choose a larger cylinder $\widetilde Q$, in the interior of $D$, and containing $\overline Q$. For sufficiently large $j$, we have $w_{r_j}=u_{r_j}$ in $\widetilde Q$. 
Set $$v_j:=w_{r_j}^{+}.$$ 
By Remark~\ref{rem:ordering}, $v_j$ is subcaloric in $\widetilde Q$, in the distributional sense. Hence,
$$
\mu_j:=\Delta v_j-\partial_t v_j
$$
is a nonnegative Radon measure. Fix $0<\vep<h$, and define the functions
$$
\mathcal T_{\vep,h}(\tau) :=\min\{\max\{\tau,\vep\},h\}, \quad
\mathcal P_{\vep,h}(\tau) :=\int_0^\tau \mathcal T_{\vep,h}(s)\,ds, \quad \text{ for } \tau \geq 0.
$$
We have
$$
0\leq \mathcal T_{\vep,h}(\tau)\leq h, \quad 0\le \mathcal P_{\vep,h}(\tau)\leq h\tau,
$$
and
$$
\mathcal T_{\vep,h}'(\tau) =\chi_{\{\vep<\tau<h\}} \quad\text{for a.e. }\tau.
$$

We recall the following chain rule for $C^2$-functions, which holds almost everywhere:
\begin{equation} \label{eq:chainrule}
(\Delta - \partial_t) \mathcal P_{\vep,h}(v) = \mathcal T_{\vep, h}' (v) |\nabla v|^2 + \mathcal T_{\vep, h} (v) (\Delta - \partial_t )v.
\end{equation}
Applying \eqref{eq:chainrule} to space-time mollifications of $v_j$, and passing to the limit, we see that
\begin{equation} \label{eq:productrule}
\operatorname{div} \left( \mathcal T_{\vep,h}(v_j)\nabla v_j \right) -\partial_t \mathcal P_{\vep,h}(v_j) = \chi_{\{\vep<v_j<h\}}|\nabla v_j|^2 +\mathcal T_{\vep,h}(v_j)\mu_j,
\end{equation}
in the distributional sense on $\widetilde Q$.
Choose $\varphi\in C_c^\infty(\widetilde Q)$ such that $\varphi\geq 0$ and $\varphi=1$ on a neighborhood of $\overline Q$. Testing \eqref{eq:productrule} against $\varphi$, and using that $\mathcal T_{\vep,h}(v_j)\mu_j$ is nonnegative, we get
\begin{align*}
\iint_{Q\cap \{\vep<v_j<h\}} |\nabla v_j|^2\, dxdt &\leq -\iint_{\widetilde Q} \mathcal T_{\vep,h}(v_j)\nabla v_j\cdot\nabla\varphi +\iint_{\widetilde Q}\, 
\mathcal P_{\vep,h}(v_j)\partial_t \varphi\, dxdt\\
&\leq  h \|\nabla v_j\|_{L^\infty(\widetilde Q)} \iint_{\widetilde Q}|\nabla\varphi|\,dxdt +h \|v_j\|_{L^\infty(\widetilde Q)} \iint_{\widetilde Q}|\partial_t \varphi|\, dxdt\\
&\leq C  h,
\end{align*}
where $C>0$ is independent of $j$, $\vep$, and $h$.

Letting $\vep\to 0$, and using that $\nabla v_j=0$~a.e.~on level sets, we obtain
$$
\iint_{Q\cap\{0\le v_j< h\}}|\nabla v_j|^2\le C h.
$$
The same argument applies to \(w_{r_j}^{-}\). Adding the two estimates proves $(i)$.\medskip

Next, we prove $(ii)$.  Since $w_{r_j}=u_{r_j}$ in $\widetilde Q$, $w_{r_j}$ is a Lipschitz viscosity solution of 
\begin{equation*}
    \begin{cases}
        \partial_t w_{r_j}-F^-(D^2w_{r_j})=0 &\text{ in } \{w_{r_j}>0\}\cap \widetilde Q,\\
        \partial_t w_{r_j}-F^+(D^2w_{r_j})=0 &\text{ in }  \{w_{r_j}<0\}\cap \widetilde Q.
    \end{cases}
\end{equation*}
By the interior $C^{1,\alpha}$ regularity for uniformly parabolic equations, and the fact that $w_{r_j}\to u_0$ locally uniformly in $D$, we have
$$
\nabla w_{r_j}\to \nabla u_0 \quad \text{ locally uniformly in }  \{u_0\neq 0\} \cap D.
$$
Since $\nabla u_0=0$ a.e. in $\{ |u_0| =h\}$, the above convergence implies that
$$\chi_{\{|w_{r_j}|\geq h\}}|\nabla w_{r_j}|^2 \to \chi_{\{|u_0|\geq h\}}|\nabla u_0|^2\quad\text{a.e. in } Q.$$
It follows that
$$
    \lim_{j\to \infty}\iint_{Q\cap \{|w_{r_j}|\geq h\} }|\nabla w_{r_j}|^2\, dxdt=\iint_{Q\cap \{|u_0|\geq h\} }|\nabla u_0|^2\, dxdt,
$$
thus proving $(ii)$.\medskip

    Finally, by weak lower semicontinuity, and $(i)$ and $(ii)$, we conclude that
    \begin{align*}
 \iint_Q |\nabla u_0|^2\, dxdt &\leq \liminf_{j\to\infty} \iint_Q |\nabla w_{r_j}|^2\, dxdt\\
 & \leq  \limsup_{j\to\infty} \iint_Q |\nabla w_{r_j}|^2\, dxdt \\
 & \leq Ch  + \iint_{Q\cap \{ |u_0| \geq h\} }|\nabla u_0|^2\, dxdt\\
 & \leq Ch +  \iint_Q |\nabla u_0|^2\, dxdt.
 \end{align*}
Letting $h\to 0$, we obtain \eqref{eq:strongL2}.\medskip

\medskip
    \underline{Step 1.2:} \textit{Constant energy.} By the scaling property of $J$ (see Remark~\ref{rem:scaling}), we have
     $$
     J(t; w_{r_j}^+, w_{r_j}^-) = J(r_j^2 t; w^+, w^-).
     $$
     Moreover, by the local ACF formula (Theorem~\ref{thm:ACFlocal}), we know that the limit
     $$
    J(0^+;  w^+, w^-) := \lim_{t \to 0} J(t;  w^+, w^-) 
     $$
     is well-defined.
     Hence, by Step 1.1, for all $0<t\leq 1$,
     \begin{align*}
         J(t; u_0^+, u_0^-) &= \lim_{j\to \infty} J(t; w_{r_j}^+, w_{r_j}^-)
         = \lim_{j\to \infty} J(r_j^2 t; w^+, w^-)
         = J(0^+;  w^+, w^-),
     \end{align*}
     which implies that $J(t; u_0^+, u_0^-)$ is constant. 
\medskip

\textbf{Step 2:} \textit{The dichotomy.} We distinguish two cases:\medskip

\underline{Case 1:} Assume that $(0,0)$ is a regular point.  We claim that 
$$
        J(t; u_0^+, u_0^-) =: J_0>0.
$$
Indeed, by Step 1.2, we only need to show that the constant $J_0$ is strictly positive. Since $(0,0)$ is a regular point, by Lemma~\ref{lem:nondegeneracy}, there are constants $N>0$ and $r_0>0$ such that
$$
\sup_{Q_r} u^\pm \geq Nr, \quad \text{ for all } 0<r<r_0.
$$
Fix $R>0$. For $j$ sufficiently large so that $r_jR < r_0$, rescaling we have
$$
\sup_{Q_R} u_{r_j}^\pm = \frac{1}{r_j}\sup_{Q_{Rr_j}} u^\pm \geq NR.
$$
Passing to the limit as $j\to\infty$, we obtain
\begin{equation} \label{eq:nondeglimit}
\sup_{Q_R} u_0^\pm \geq NR,
\quad\text{for all }R>0.
\end{equation}
In particular, $u_0^\pm\not\equiv 0$ at every scale. We claim that, for every $0<t\leq 1$,
$$
I(t;u_0^\pm)>0.
$$ 
If for some $t_0>0$ we had $I(t_0;u_0^+)=0$, then $\nabla u_0^+=0$ a.e.~in $\mathbb R^n\times(-t_0,0]$, thus $u_0^+$ is constant on each time slice.
Hence, there is a continuous function $c\geq 0$ such that
$$
u_0^+(x,t)=c(t)\qquad \text{in } \R^n\times(-t_0,0].
$$
Then $D^2 u_0=0$ in the open set $\{u_0>0\}\cap (\mathbb R^n\times(-t_0,0])$.
Since $u_0$ satisfies
$$
\partial_t u_0 - F^-(D^2u_0)=0 \quad\text{in }\{u_0>0\},
$$
and $F^-(0)=0$, it follows that $c'(t)=\partial_t u_0=0$ in $\{u_0>0\}$, so $c$ is constant on each connected
component of $\{c>0\}=\{u_0>0\}$. 
Because $u_0(0,0)=0$, we have $c(0)=0$. If $\{c>0\}\neq \emptyset$, then $c$ would be a positive constant on one of its connected components. By continuity, $c$ vanishes at the right endpoint of this component, which is a contradiction. Therefore, $c\equiv0$ in $(-t_0,0]$.
This contradicts the non-degeneracy estimate in \eqref{eq:nondeglimit}, since
$$
0=\sup_{t\in(-R^2,0]} c(t)\geq NR,
$$
for $R$ sufficiently small. Hence, $I(t;u_0^+)>0.$ The same argument applies to $u_0^-$. Therefore,
\begin{equation*}
J(t;u_0^+,u_0^-)=I(t;u_0^+)I(t;u_0^-)=J_0>0.
\end{equation*}
By Proposition~\ref{prop:rigidity}, there are constants $a, b > 0$ and a unit vector $e$ such that 
\begin{equation}\label{e:two plane soln}
    u_{r_j} \to a (x\cdot e)^+ - b (x \cdot e)^-, \quad \text{ as } j \to \infty.
\end{equation}
We now show that $a = b$, in particular, $u_0:=\lim_{j\to \infty}u_{r_j}$ is of the following form:
$$
u_0(x)=a (x\cdot e).
$$
Suppose for contradiction that $a >b$ and suppose, for simplicity, that $e=e_n$. Let $\gamma:=\frac{a+b}{2}$ and $\delta:=\frac{a-b}{2}$. Then from \eqref{e:two plane soln},
\begin{equation*}
    u_0 (x) = \gamma x_n+\delta |x_n|.
\end{equation*}
Define $v(x,t):=\delta |x_n|$. Thus, $v=u_0-\gamma x_n$. Notice that $\partial_tv=\partial_t u_0$, $D^2v=D^2u_0$ distributionally, and hence $v$ satisfies the same global inequalities as $u_0$, i.e. 
\begin{equation*} 
    F^-(D^2v)\leq \partial_tv \leq F^+(D^2v),
\end{equation*}
in the viscosity sense.
But this is a contradiction as $v$ cannot be a viscosity supersolution, in particular, the first inequality cannot hold. Indeed, 
consider the test function $\varphi(x,t)=\delta x_n^2$, for $\delta$ as above. We have $\varphi(0,0)=0=v(0,0)$, and
$$
\varphi(x,t)\leq v(x,t) \qquad \text{for all } (x,t)\in Q_1, 
$$
i.e., $\varphi$ touches $v$ from below at $(0,0)$, but
\begin{equation*}
   0< 2\lambda \delta \leq F^-(D^2\varphi) \leq \partial_t\varphi =0, 
\end{equation*}
which is a contradiction. Thus, $a\leq b$. The argument holds analogously for $b>a$. Therefore, 
$$
u_{r_j}\to a(x\cdot e), \qquad \text{as } j\to \infty. \medskip
$$

\underline{Case 2:} Assume that $(0,0)$ is a singular point. Hence, there exist $r_j < 1/j$ such that $\sup_{Q_{r_j}}|u|< r_j/j^2$. Then, choosing $\rho_j := r_j / j$, for any $R > 0$ and all $j \geq R$,
$$
\sup_{Q_R} |u_{\rho_j}| =\frac{1}{\rho_j}  \sup_{Q_{R\rho_j}} |u| \leq \frac{1}{\rho_j}  \sup_{Q_{j\rho_j}} |u| < \frac{1}{j}\to 0, \quad \text{ as } j\to\infty.
$$
By Step 1, after passing to a subsequence, we have
$$
u_{\rho_j}\to v_0\equiv 0, \quad \text{ as } j\to\infty,
$$
locally uniformly in $\Rn\times(-\infty,0].$ Therefore,
$$
J(t;v_0^+,v_0^-):=J_0=0.
$$

Now, let $u_0$ be any blow-up limit. We will show that $u_0\equiv 0$. Fix $T>0$. Applying the argument of Step 1.2 on the strip $\Rn\times(-T,0]$, we obtain
$$
J(T;u_0^+,u_0^-)=J_0=0.
$$
Hence, $I(T;u_0^+)=0$ or  $I(T;u_0^-)=0.$ Assume that $I(T;u_0^+)=0$. The other case is analogous.
Then, $\nabla u_0^+=0$ a.e. in $\Rn\times(-T,0]$, so there is a continuous function $c\geq 0$ such that
$$
u_0^+(x,t)=c(t)
\qquad
\text{in }\Rn\times(-T,0].
$$
Arguing as in Case 1, we see that $c\equiv 0$, and thus, $u_0\leq 0$ in $\Rn\times(-T,0]$. Moreover, by the global inequalities,
$$
\partial_t u_0-F^+(D^2u_0)\leq 0
\qquad
\text{in }\Rn\times(-T,0].
$$
Since $u_0(0,0)=0$, by the parabolic strong maximum principle,
$$
u_0\equiv 0
\qquad
\text{in }\Rn\times(-T,0].
$$
Because $T>0$ was arbitrary, $u_0\equiv 0$ in $\Rn\times(-\infty,0]$. This holds for every blow-up limit, so
the blow-up limit is unique and $a=0$, as claimed.
\end{proof} 

\begin{rem}
The ordering assumption plays a key role in the proof: it allows us to apply the parabolic monotonicity formula to classify all blow-up limits. Without this assumption, the classification of global Lipschitz solutions to two-phase anisotropic problems is an open problem. In the elliptic setting, this was established by Caffarelli, De Silva, and Savin \cite{CDSS18}, in dimension 2, for problems involving different linear operators; see also \cite{DSS19}.
\end{rem}

\section{Improvement of flatness}\label{sec:IF}

In the previous section, we showed that if $(x_0,t_0)$ is a regular point, then 
$$
u_{r_j}\to a(x\cdot e), \quad a>0, \quad  \text{as } j\to \infty.
$$
Now, a priori, the slope $a$ and direction $e$ could depend on the sequence of scales $r_j$. We will prove that the blow-up limit is unique by showing an improvement of flatness of the graph of $u$ at every small scale, in the  spirit of De Silva's work in \cite{DS11}.

Recall that, given $(x_0,t_0)\in \Gamma$, we define the parabolic Lipschitz rescaling as
$$
u_\rho(x,t):=\frac{1}{\rho}u(\rho x+x_0,\rho^2 t+t_0).
$$
Note that if $(x_0,t_0)$ is a regular point for $u$, then $(0,0)$ is a regular point for $u_\rho$.
The following lemma provides the initial flatness at a sufficiently small scale.

\begin{lem} \label{lem:closeness}
    Let $(x_0,t_0)\in \Gamma$ be a regular point.
    For all $\delta>0$, there is $\rho\in (0,1)$ such that
    \begin{equation*} 
            \sup_{Q_1} \left| u_\rho -a(x\cdot e)\right|\leq \delta,
    \end{equation*}
    for some constant $a>0$ and unit vector $e$.
\end{lem}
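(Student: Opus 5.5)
This follows directly from the classification of blow-ups in Lemma~\ref{lem:blowups}, via a compactness argument. Translating, we may assume $(x_0,t_0)=(0,0)$. Since $(0,0)$ is a regular point, Lemma~\ref{lem:blowups} applies to the family $\{u_r\}_{0<r<1/2}$, which is uniformly Lipschitz (in the parabolic sense) on compact subsets of $\Rn\times(-\infty,0]$. We would take any sequence $r_j\to 0$. By Arzel\`a--Ascoli and the lemma, a subsequence (not relabeled) satisfies $u_{r_j}\to a(x\cdot e)$ locally uniformly. By part $(i)$ of that lemma, $a>0$ and $|e|=1$.

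Next, since $\overline Q_1$ is a compact subset of $\Rn\times(-\infty,0]$, locally uniform convergence gives
$$
\sup_{Q_1}\left|u_{r_j}-a(x\cdot e)\right|\to 0 \quad \text{ as } j\to\infty .
$$
Given $\delta>0$, we would choose $j$ large enough that this supremum is at most $\delta$ and $r_j<1$. Then $\rho:=r_j$ gives the claim, with the constants $a>0$ and $e$ produced by this subsequence. The statement lets $a$ and $e$ depend on $\delta$, so uniqueness of the blow-up is not needed here. Uniqueness is exactly what the subsequent improvement of flatness is meant to establish.

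The only point needing care is that the rescalings are defined on the relevant domain. For $\rho<1/2$ the function $u_\rho$ is defined on $Q_{1/(2\rho)}\supset Q_1$, assuming, as in Lemma~\ref{lem:blowups}, that $u$ solves \eqref{eq:FBP} in a neighbourhood $Q_1(x_0,t_0)$. If the original neighbourhood is smaller, one first rescales it to unit size. The regularity of the point, and therefore $a>0$, is preserved under this rescaling, as noted before the lemma. There is no real obstacle: all the substantive work (the ACF-based classification and the equal-slopes argument $a=b$) has already been done in Lemma~\ref{lem:blowups}.
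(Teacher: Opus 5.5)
Your proposal is correct and is essentially the paper's argument: extract a subsequence $u_{r_j}\to a(x\cdot e)$ with $a>0$ from Lemma~\ref{lem:blowups}$(i)$, use uniform convergence on $\overline{Q}_1$ to make $\sup_{Q_1}|u_{r_j}-a(x\cdot e)|\leq\delta$, and set $\rho:=r_j$ for one large $j$. Your remarks on the domain of $u_\rho$ and on $a$, $e$ being allowed to depend on $\delta$ are accurate, though the paper does not spell them out.
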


\begin{proof}
Let $(x_0,t_0)\in \Gamma$ be a regular point.  
    By Lemma~\ref{lem:blowups}, there is a subsequence of blow-ups $u_{r_j}$ such that
    $u_{r_j}\to a(x\cdot e)$ locally uniformly as $j\to \infty$, for some constant $a>0$ and unit vector $e$.
    Let $\delta>0$. Then there is $j_0=j_0(\delta)\geq 1$ such that
    $$
    \sup_{Q_1}| u_{r_j}- a(x\cdot e)|\leq  \delta, \qquad \text{ for all } j\geq j_0.
    $$
 Setting $\rho:=r_{j_0}$, we obtain the result.
\end{proof}

\begin{rem}\label{rem:normal}
By Lemma~\ref{lem:nondegeneracy}, we have $a\geq N$, where $N$ is the non-degeneracy constant. Hence, dividing the above flatness condition by $a$,
  $$
    \sup_{Q_1}| a^{-1} u_\rho- x\cdot e|\leq  a^{-1} \delta \leq N^{-1} \delta.
    $$
Since $a^{-1} u_\rho$ satisfies the same equations and inequalities as $u$, applying Lemma~\ref{lem:closeness} with $N\delta$ in place of $\delta$, and denoting the normalized function $a^{-1}u_\rho$ again by $u$, we may assume that $(0,0)$ is a regular point of $\Gamma$ and 
$$
\sup_{Q_1} |u - x\cdot e| \leq \delta.
$$
\end{rem}

The key step is showing that the flatness improves geometrically at a smaller scale. Namely:

\begin{thm}[Improvement of flatness] \label{thm:IF}
     Let $u$ be a Lipschitz solution of \eqref{eq:FBP} in $Q_1$.
Let $e\in \Rn$ be a unit vector. Assume that $(0,0)\in \Gamma$ and that $u$ satisfies
    \begin{equation} \label{eq:flatness}
            \sup_{Q_1} |u - x\cdot e| \leq \delta,
    \end{equation}
    for some $\delta>0$. There are universal constants $0<r_0\leq 1/2$ and $C_0>0$ such that for all $0<r\leq r_0$, there is $\delta_0=\delta_0(r)>0$, such that: if $0 < \delta \leq \delta_0$, then there is some vector $ \nu \in \Rn$, with  $| \nu-e|\leq C_0\delta$, such that $u$ satisfies
    $$
    \sup_{Q_r} |u- x \cdot \nu| \leq \frac{\delta}{2}r.
    $$
\end{thm}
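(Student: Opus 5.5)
We argue by compactness, in the spirit of \cite{DS11}. Fix $r\le r_0$ (with $r_0$ to be chosen from the limiting problem). Suppose the statement fails: then there are solutions $u_k$ of \eqref{eq:FBP} in $Q_1$ with $(0,0)\in\Gamma_k$, unit vectors $e_k$, and $\delta_k\to0$ such that $\sup_{Q_1}|u_k-x\cdot e_k|\le\delta_k$, but the conclusion fails for every admissible $\nu$. After a rotation we may assume $e_k=e_n$. We then consider the normalized errors
$$
\tilde u_k:=\frac{u_k-x_n}{\delta_k},\qquad |\tilde u_k|\le 1 \text{ in } Q_1 .
$$

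\textbf{Step 1: the linearized problem.} The first task is to identify what $\tilde u_k$ solves. In $\{u_k>0\}$ we have $\partial_t\tilde u_k=\delta_k^{-1}F^-(\delta_k D^2\tilde u_k)=F^-(D^2\tilde u_k)$ by homogeneity, since $D^2x_n=0$. Likewise $\tilde u_k$ solves the $F^+$ equation in $\{u_k<0\}$, and globally
$$
F^-(D^2\tilde u_k)\le\partial_t\tilde u_k\le F^+(D^2\tilde u_k).
$$
Because $u_k$ is $\delta_k$-close to $x_n$, the free boundary $\Gamma_k$ lies in the slab $\{|x_n|\le\delta_k\}$. Thus $\tilde u_k$ is an $F^-$-solution in $\{x_n>\delta_k\}$, an $F^+$-solution in $\{x_n<-\delta_k\}$, and belongs to $S(0)$ across the slab. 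The expected limit problem is therefore a flat transmission problem: $\partial_t w=F^-(D^2w)$ in $\{x_n>0\}$, $\partial_t w=F^+(D^2w)$ in $\{x_n<0\}$, $w$ continuous across $\{x_n=0\}$, with the global inequalities holding in the viscosity sense. The global inequalities force the normal derivatives to match across $\{x_n=0\}$: a kink in either direction is excluded by testing with $\pm\delta x_n^2$-type functions, as in Lemma~\ref{lem:blowups}. This is presumably the content of Appendix~\ref{a:flattrans}. From that appendix I would assume a $C^{1,\alpha}$ estimate at the origin of the form
$$
|w(x,t)-w(0,0)-\nabla w(0,0)\cdot x|\le C\,(|x|+|t|^{1/2})^{1+\alpha},\qquad |\nabla w(0,0)|\le C .
$$

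\textbf{Step 2: compactness (main obstacle).} We need the $\tilde u_k$ to converge uniformly on compact subsets of $Q_{1/2}$. Since $\tilde u_k\in S(0)$ and is bounded, the interior Krylov--Safonov Hölder estimate for the Pucci class \cite{WI92} gives equicontinuity directly, with no Harnack-type iteration across the free boundary needed. This is easier than in one-phase problems. By Arzelà--Ascoli, $\tilde u_k\to w$ uniformly on compacts. The real difficulty is verifying that $w$ solves the transmission problem on $\{x_n=0\}$ in the viscosity sense, which requires a stability argument in which the interface slab shrinks. Away from $\{x_n=0\}$ this is standard stability. On $\{x_n=0\}$ we take a test function touching $w$, perturb it so that it touches $\tilde u_k$ nearby, and use only the global inequalities, which are stable. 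Since $\tilde u_k(0,0)=-0/\delta_k=0$, we also get $w(0,0)=0$.

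\textbf{Step 3: conclusion.} The estimate from Step~1 gives $|w-\nabla w(0,0)\cdot x|\le C\rho^{1+\alpha}$ in $Q_\rho$. We choose $r_0$ with $Cr_0^{\alpha}\le 1/8$. For $k$ large, uniform convergence then yields
$$
|u_k-x\cdot(e_n+\delta_k\nabla w(0,0))|\le \delta_k\Big(\frac r8+o(1)\Big)\le\frac{\delta_k}{4}r\quad\text{in } Q_r .
$$
Setting $\nu=e_n+\delta_k\nabla w(0,0)$ gives $|\nu-e_n|\le C_0\delta_k$ with $C_0$ universal, contradicting the assumption. The constant $\delta_0(r)$ is whatever this contradiction argument produces.
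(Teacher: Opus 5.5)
Your proposal is correct and follows essentially the same route as the paper. The paper also runs a contradiction/compactness argument on $(u_k-x_n)/\delta_k$, gets equicontinuity from the Krylov--Safonov estimates for the Pucci class, and uses standard stability to obtain a limit $w$ solving the $F^{\mp}$ equations in $\{\pm x_n>0\}$ together with the global inequalities. From these, Lemma~\ref{lem:Pucci_implies_transmission} gives the transmission condition and Lemma~\ref{lem:blowup_C1_alpha} the $C^{1,\alpha}$ expansion, and the argument closes with $\nu=e_n+\delta_k\nabla w(0,0)$ and $r_0$ small enough that $Cr_0^{\alpha}$ absorbs the error.
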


\begin{proof}
After a rotation, we may assume that $e=e_n$.
    We divide the proof into three steps:\medskip

    \textbf{Step 1:} \textit{Compactness.} Assume for the sake of contradiction that the statement is false. Fix $0<r_0\leq 1/2$ and $C_0>0$ to be chosen later. 
    There exists some $0<r\leq r_0$, a sequence $\delta_k\to 0$, and viscosity solutions $u_k$ to \eqref{eq:FBP} in $Q_1$ such that $(0,0)\in \Gamma_k$, where $\Gamma_k$ is the free boundary corresponding to the function $u_k$, and
    $$
    \sup_{Q_1} |u_k -  x_n| \leq \delta_k,
    $$
    but the conclusion of the theorem does not hold, i.e., 
     \begin{equation} \label{eq:contradiction}
    \sup_{Q_r} |u_k- x\cdot \nu| > \frac{\delta_k}{2}r,
    \end{equation}
    for all vectors $\nu\in \R^n$ with $|\nu- e_n|\leq C_0 \delta_k$. Let
    $$
    v_k:= \frac{u_k -  x_n}{\delta_k}.
    $$
    By the flatness condition, we have $|v_k|\leq 1$ in $Q_1$, and $\Gamma_k \subseteq Q_1\cap \{|x_n|\leq \delta_k\}$.
    Moreover, 
    $$
    F^-(D^2 v_k)   \leq  \partial_t v_k \leq F^+(D^2 v_k) \quad \text{ in } Q_1, 
    $$
    in the viscosity sense. By the interior regularity for functions in the Pucci class,
    $$
    \|v_k\|_{C^{0,\gamma}(\overline Q_{7/8})} \leq C, 
    $$
    where $C>0$ does not depend on $k$. By the Arzel\`a-Ascoli theorem, up to a subsequence,
    \begin{equation} \label{eq:unifconv}
            v_k \to w \quad \text{ uniformly on } \overline Q_{7/8}.
    \end{equation}
    Note that $w(0,0)=0$ and $|w|\leq 1$ on $\overline Q_{7/8}$.
\medskip

    \textbf{Step 2:} \textit{Limiting problem.} 
    Next, we show that $w$ is a viscosity solution to
    $$
        \begin{cases}
\partial_t w- F^-(D^2 w)  = 0 & \text{ in } Q_{3/4}^+,\\
\partial_t w-F^+(D^2 w)= 0 & \text{ in } Q_{3/4}^-,\\
\partial_t w- F^-(D^2 w)  \geq 0 & \text{ in } Q_{3/4},\\
 \partial_t w - F^+(D^2 w)   \leq 0 & \text{ in } Q_{3/4},
 \end{cases}
 $$
     where $Q_{3/4}^{\pm}=Q_{3/4}\cap\{\pm x_n>0\}$.
It is enough to see that $w$ is a viscosity supersolution of the first equation since the argument is analogous for the other cases.

Indeed, assume that $\varphi\in C^2$ touches $w$ from below at $(x_0,t_0)\in Q_{3/4}^+$.
 We may assume that they touch strictly; otherwise, we take $\varphi -\vep |x-x_0|^2-\vep(t_0-t)$, for some $\vep>0$ small.
We need to prove that
\begin{equation} \label{eq:goal}
\partial_t \varphi(x_0,t_0)- F^-(D^2 \varphi(x_0,t_0))  \geq 0.
\end{equation}
By the uniform convergence in \eqref{eq:unifconv}, there are points $(x_k,t_k)\in \{u_k>0\}$ converging to $(x_0,t_0)$, and a sequence of real numbers
$d_k\to 0$, such that the function
$\varphi_k := \varphi + d_k$ touches $v_k$ from below at $(x_k,t_k)$. Since 
$\partial_t v_k- F^-(D^2 v_k)  = 0$ in $\{u_k>0\}$, in the viscosity sense,
$$
\partial_t \varphi_k(x_k,t_k)- F^-(D^2  \varphi_k(x_k,t_k))  \geq 0.
$$
Passing to the limit as $k\to \infty$, and using continuity, we get \eqref{eq:goal}.

\medskip
    \textbf{Step 3:} \textit{Contradiction.} By Lemmas~\ref{lem:Pucci_implies_transmission} and \ref{lem:blowup_C1_alpha}, there is $C_0>0$ such that 
    $$
    \sup_{Q_r} |w - \nabla w(0,0) \cdot x| \leq C_0 r^{1+\alpha},
    $$
    where $|\nabla w(0,0)|\leq C_0.$ Using \eqref{eq:unifconv}, it follows that
     $$
    \sup_{Q_r} |v_k - \nabla w(0,0)\cdot x| \leq 2C_0 r^{1+\alpha},
    $$
    for $k$ sufficiently large. Rescaling back, we get
    $$
        \sup_{Q_r} |u_k -  x_n-\delta_k \nabla w(0,0) \cdot x| \leq 2C_0 r^{1+\alpha}\delta_k.
    $$
    Let $\nu_k= e_n+\delta_k\nabla w(0,0)$. Then $|\nu_k-e_n|\leq C_0 \delta_k$, and choosing $0<r_0\leq 1/2$ small enough so that $2C_0 r^\alpha\leq 1/2$, we see that
    $$
    \sup_{Q_r} |u_k- x\cdot \nu_k| \leq \frac{\delta_k}{2}r,
    $$
    which contradicts \eqref{eq:contradiction}.
\end{proof}

\section{$C^{1,\alpha}$ regularity} \label{sec:regularity}

An iteration of the improvement of flatness yields the $C^{1,\alpha}$ regularity of the function $u$ at \textit{flat} free boundary points, i.e., points in $\Gamma$ where the flatness condition \eqref{eq:flatness} is satisfied, so in particular, at regular points.
The proof is standard, but we provide it for completeness.

\begin{thm} \label{thm:regularity}
 Let $0<r_0 \leq 1/2$ and $C_0>0$ be as in Theorem~\ref{thm:IF}. There exist $\bar \delta_0=\bar\delta_0(r_0,C_0)>0$ and $\alpha = \alpha(r_0) \in (0,1)$ such that
if $u$ is a viscosity solution to \eqref{eq:FBP} in $Q_1$ satisfying \eqref{eq:flatness} for some $0<\delta \leq \bar \delta_0$, then for any $(x_0,t_0)\in \Gamma\cap Q_{1/2}$, there is $\nu_{x_0,t_0} \in \Rn$, with $|\nu_{x_0,t_0}- e|\leq C\delta$, such that
$$
\sup_{Q_r(x_0,t_0)} |u - \nu_{x_0,t_0} \cdot (x-x_0)| \leq C \delta r^{1+\alpha}, \quad \hbox{ for all}~ r\leq  r_0,
$$ 
where $C>0$ depends on $r_0$ and $C_0$.
\end{thm}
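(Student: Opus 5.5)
We prove the statement by iterating Theorem~\ref{thm:IF} at each point $(x_0,t_0)\in\Gamma\cap Q_{1/2}$ along the geometric scales $r_0^k$, and then interpolating between consecutive scales. Fix $r=r_0$ in Theorem~\ref{thm:IF}, and let $\delta_0=\delta_0(r_0)$ be the corresponding threshold.

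\textbf{Step 1: initial flatness at each free boundary point.} Let $(x_0,t_0)\in\Gamma\cap Q_{1/2}$. Consider the rescaling
$$
\tilde u(x,t)=2u\big(x_0+x/2,\ t_0+t/4\big)
$$
on $Q_1$; it still solves \eqref{eq:FBP}, and $(0,0)\in\tilde\Gamma$. Since $|x_0\cdot e|=|u(x_0,t_0)-x_0\cdot e|\le\delta$, we have
$$
\sup_{Q_1}|\tilde u-x\cdot e|\le 2\delta+2|x_0\cdot e|\le 4\delta.
$$
We choose $\bar\delta_0$ so small that $4\bar\delta_0\cdot 2\le\delta_0$; this leaves room for the error accumulated during the iteration.

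\textbf{Step 2: iteration.} We construct inductively vectors $\nu_k$ with $\nu_0=e$ and
$$
\sup_{Q_{r_0^k}}|\tilde u-x\cdot\nu_k|\le 4\delta\,2^{-k}r_0^k,\qquad |\nu_{k+1}-\nu_k|\le C_0\,4\delta\,2^{-k}.
$$
To pass from $k$ to $k+1$, set $U(x,t)=r_0^{-k}\tilde u(r_0^kx,r_0^{2k}t)$. Then $\sup_{Q_1}|U-x\cdot\nu_k|\le\delta_k:=4\delta 2^{-k}$, with $(0,0)\in\Gamma_U$.

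The main obstacle is that $\nu_k$ is no longer a unit vector, whereas Theorem~\ref{thm:IF} is stated for unit $e$. We would handle this by normalization: $|\nu_k-e|\le 8C_0\delta$, so $|\nu_k|\in[1/2,2]$ for $\bar\delta_0$ small. Apply Theorem~\ref{thm:IF} to $U/|\nu_k|$, which solves the same problem by $1$-homogeneity, with unit direction $\nu_k/|\nu_k|$ and flatness $\delta_k/|\nu_k|\le 2\delta_k\le\delta_0$. Multiplying back by $|\nu_k|$ gives the improved flatness $\sup_{Q_{r_0}}|U-x\cdot\nu_{k+1}|\le\tfrac{\delta_k}{2}r_0$, up to a constant factor. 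To keep exact halving, we can run the argument with a slightly smaller target ratio, which only changes constants. Alternatively, Theorem~\ref{thm:IF} holds verbatim for non-unit $e$ with $|e|\in[1/2,2]$, since its compactness proof only uses that the flat limit has a nonzero slope. Scaling back yields the inductive claim.

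\textbf{Step 3: conclusion.} The increments $|\nu_{k+1}-\nu_k|$ are summable. Hence $\nu_k\to\nu$ with $|\nu-\nu_k|\le C\delta 2^{-k}$, and in particular $|\nu-e|\le C\delta$. Define $\alpha$ by $r_0^\alpha=1/2$, that is, $\alpha=\log 2/\log(1/r_0)\in(0,1)$ because $r_0\le1/2$. For $r_0^{k+1}<r\le r_0^k$, the triangle inequality gives
$$
\sup_{Q_r}|\tilde u-x\cdot\nu|\le \sup_{Q_{r_0^k}}|\tilde u-x\cdot\nu_k|+r_0^k|\nu-\nu_k|\le C\delta 2^{-k}r_0^k\le C\delta r^{1+\alpha},
$$
where the constant absorbs a factor $r_0^{-1-\alpha}$. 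Undoing the rescaling of Step 1 gives the estimate for $u$ at $(x_0,t_0)$, with $\nu_{x_0,t_0}=\nu$ and $C$ depending only on $r_0$ and $C_0$.
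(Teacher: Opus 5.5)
Your proposal is correct and follows essentially the same route as the paper. Both rescale at $(x_0,t_0)$ by a factor $1/2$ to get flatness $4\delta$, iterate Theorem~\ref{thm:IF} along the scales $r_0^k$ after normalizing by $|\nu_k|$, set $r_0^\alpha=1/2$, and conclude by summing the increments $|\nu_{k+1}-\nu_k|\le C_0\delta_k$ and interpolating between consecutive scales. One thing to tidy up: the normalization costs nothing, since applying Theorem~\ref{thm:IF} to $U/|\nu_k|$ with the exact flatness parameter $\delta_k/|\nu_k|$ and multiplying back by $|\nu_k|$ gives exactly $\sup_{Q_{r_0}}|U-x\cdot\nu_{k+1}|\le\frac{\delta_k}{2}r_0$ with $\nu_{k+1}:=|\nu_k|\nu$. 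So your ``up to a constant factor'' hedge and the ``smaller target ratio'' fix are unnecessary; the latter is not provided by Theorem~\ref{thm:IF} as stated, and using the crude bound $2\delta_k$ as the flatness parameter would in fact destroy the improvement.
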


\begin{proof}
Let $(x_0,t_0)\in \Gamma\cap Q_{1/2}$.
For $j\geq 0$, set $\rho_j := \frac{1}{2}r_0^j$, and consider the blow-up sequence
$$
u_{\rho_j}(x,t) = \frac{1}{\rho_j} u( \rho_j x+x_0, \rho_j^{2} t+t_0)\quad \hbox{ for}~ (x,t)\in Q_1.
$$
Then $u_{\rho_j}$ is a viscosity solution of \eqref{eq:FBP}, with $u_{\rho_j}(0,0)=0$. 
Define  $\alpha\in (0,1)$ such that $r_0^{\alpha} = 1/2.$ 
Let $\hat \delta_0= \hat \delta_0(r_0)$ be the flatness threshold in Theorem~\ref{thm:IF}. Assume that
    \begin{equation} \label{eq:flatness2}
            \sup_{Q_1} |u - x\cdot e| \leq \delta,
    \end{equation}
    for some unit vector $e\in \Rn$ and some
$$
0<\delta \leq \bar\delta_0 := \min \left\{ \frac{\hat \delta_0}{8}, \frac{1}{16 C_0}\right\}.
$$
Set $\delta_j:=2^{2+\alpha}\delta\rho_j^\alpha.$ 
We claim that for all $j\geq 0$, there exists $\nu_j\in\Rn$ such that
$$
\frac{1}{2}\leq|\nu_j|\leq\frac{3}{2}, \qquad |\nu_{j+1}-\nu_j|\leq C_0\delta_j,
$$
and
\begin{equation} \label{eq:flatnessj}
\sup_{Q_1}|u_{\rho_j}-x\cdot\nu_j|\leq\delta_j.
\end{equation}
We proceed by induction. For $j=0$, take $\nu_0=e$. We have $|\nu_0|=1$. Also, since $u(x_0,t_0)=0$, by the flatness condition \eqref{eq:flatness2}, applied to $x=x_0$, we get $|x_0\cdot e|\leq\delta$. Hence,
\begin{align*}
\sup_{Q_1}|u_{\rho_0}-x\cdot e| &\leq 2\sup_{Q_{1/2}(x_0,t_0)}|u-x\cdot e| +2|x_0\cdot e| \leq 4\delta = \delta_0,
\end{align*}
where we used $Q_{1/2}(x_0,t_0)\subset Q_1$. Assume that the claim holds up to some $j\geq0$.
Define
$$
v_j:=\frac{u_{\rho_j}}{|\nu_j|}, \qquad e_j:=\frac{\nu_j}{|\nu_j|}.
$$
Then by \eqref{eq:flatnessj}, and $|\nu_j|\geq 1/2$,
$$
\sup_{Q_1} |v_j-x\cdot e_j| \leq \frac{\delta_j}{|\nu_j|}  \leq 2\delta_j = 8 r_0^{j \alpha} \delta \leq 8\delta\leq \hat \delta_0.
$$
By Theorem~\ref{thm:IF}, there exists $\nu\in\Rn$ such that
$$
|\nu-e_j|\leq C_0\frac{\delta_j}{|\nu_j|}, \qquad
\sup_{Q_{r_0}}|v_j-x\cdot\nu| \leq\frac{\delta_j}{2|\nu_j|}r_0.
$$
Setting $\nu_{j+1}:=|\nu_j|\nu$, we obtain
$$
|\nu_{j+1}-\nu_j| =|\nu_j|\,|\nu-e_j| \leq C_0\delta_j.
$$
Noting that $\delta_j=4 \delta r_0^{\alpha j}= 4\delta 2^{-j}$,
$$
|\nu_{j+1}-e| \leq C_0\sum_{k=0}^j\delta_k \leq 8C_0\delta \leq\frac{1}{2}.
$$
Since $|e|=1$, we have
$
\frac{1}{2} \leq |\nu_{j+1}| \leq \frac{3}{2}.
$
Moreover, $\rho_{j+1}=r_0\rho_j$, thus
\begin{align*}
\sup_{Q_1}|u_{\rho_{j+1}}-x\cdot\nu_{j+1}| &=\frac{1}{r_0}\sup_{Q_{r_0}}|u_{\rho_j}-x\cdot\nu_{j+1}| 
=\frac{|\nu_j|}{r_0}\sup_{Q_{r_0}}|v_j-x\cdot \nu| 
\leq\frac{\delta_j}{2} =\delta_{j+1},
\end{align*}
which proves the claim for $j+1$. By induction, the claim holds for all $j\geq 0$.\medskip

In particular, there exists some $\nu_{x_0,t_0}\in \Rn$ such that $\nu_j \to \nu_{x_0,t_0}$ as $j\to \infty$, and 
\begin{align*}
|\nu_{x_0,t_0} -  e| \leq \sum_{j=0}^\infty |\nu_{j+1}-\nu_j|\leq C_0  \sum_{j=0}^\infty \delta_j\leq 8 C_0 \delta.
\end{align*}
Given $0<r \leq r_0$, there exists some $j\geq 0$ such that $\rho_{j+1} < r \leq \rho_j$. We conclude that:
\begin{align*}
\sup_{Q_r(x_0,t_0)} |u - \nu_{x_0,t_0} \cdot (x-x_0)| & 
\leq \rho_j \sup_{Q_1} |u_{\rho_j} - \nu_{x_0,t_0}  \cdot x| \\
& \leq \rho_j \sup_{Q_1} |u_{\rho_j} -  x \cdot \nu_j| + \rho_j |\nu_{x_0,t_0}- \nu_j|\\
& \leq \rho_j \delta_j  +   \rho_j C_0 \sum_{k=j}^\infty \delta_k \\
&= 2^{2+\alpha} \delta \rho_j^{1+\alpha} + 2^{3+\alpha} C_0 \delta \rho_j^{1+\alpha}\\
&  \leq C\delta r^{1+\alpha},
\end{align*}
where $C>0$ depends only on $r_0$ and $C_0$.
\end{proof}

\begin{rem}
    The estimate in Theorem~\ref{thm:regularity} implies that $u$ has a parabolic $C^{1,\alpha}$ expansion at every $(x_0,t_0)\in \Gamma\cap Q_{1/2}$, where $\nu_{x_0,t_0}=\nabla u(x_0,t_0)\neq 0$. Moreover, a simple computation shows that the gradient of $u$ is continuous on $\Gamma\cap Q_{1/2}$ (e.g., see \cite[Corollary~6.3]{KSC26}). 
\end{rem}

We state this in the next result.

\begin{cor}\label{cor:normal_Holder}
    Assume the hypotheses of Theorem~\ref{thm:regularity}.
   For any $(x_1,t_1), (x_2,t_2) \in \Gamma\cap Q_{1/2}$, with $r:=d_p((x_1,t_1), (x_2,t_2))\leq r_0/3$, it holds that
    $$
    |\nu_{x_1,t_1}-\nu_{x_2,t_2}|\leq C\delta {r^{\alpha}.}
    $$
\end{cor}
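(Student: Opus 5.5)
We compare the two linear expansions from Theorem~\ref{thm:regularity} on a common cylinder. Write $P_i := (x_i,t_i)$ and $\nu_i := \nu_{x_i,t_i}$, and set $\rho := 2r \le r_0$. We may assume $t_1 \ge t_2$ after relabeling, so that $P_2 \in \overline{Q}_{\rho}(P_1)$ and $Q_r(P_2)\subset Q_{\rho}(P_1)$; here we use $|x_1-x_2| \le r$ and $0\le t_1-t_2\le r^2$. The bound $r\le r_0/3$ makes all cylinders involved have radius at most $r_0$, so Theorem~\ref{thm:regularity} applies at both points. On $Q_r(P_2)$ it gives
$$
|u-\nu_1\cdot(x-x_1)|\le C\delta (2r)^{1+\alpha}, \qquad |u-\nu_2\cdot(x-x_2)|\le C\delta r^{1+\alpha}.
$$
Subtracting, the affine function $\ell(x):=\nu_1\cdot(x-x_1)-\nu_2\cdot(x-x_2)$ satisfies $|\ell|\le C\delta r^{1+\alpha}$ on $B_r(x_2)$. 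This holds at any fixed time slice in $(t_2-r^2,t_2]$, and $\ell$ does not depend on $t$.

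Next we extract the gradient difference from $\ell$. Evaluating at $x=x_2$ gives $|\nu_1\cdot(x_2-x_1)|\le C\delta r^{1+\alpha}$. Alternatively, we can observe directly that $\ell(x)=(\nu_1-\nu_2)\cdot(x-x_2)+\ell(x_2)$. Now take $x=x_2+\tfrac{r}{2}\,\frac{\nu_1-\nu_2}{|\nu_1-\nu_2|}\in B_r(x_2)$ and subtract the value at $x_2$. This gives
$$
\tfrac{r}{2}|\nu_1-\nu_2|\le 2C\delta r^{1+\alpha},
$$
that is, $|\nu_1-\nu_2|\le 4C\delta r^\alpha$, which is the claim.

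The only point needing care is the containment $Q_r(P_2)\subset Q_{2r}(P_1)$. This containment relies on the cylinders being backward in time, which is why we order the times. If instead $t_2>t_1$, we swap the roles of the two points, and the argument is symmetric. We must also keep the time slice used strictly inside both cylinders, so that the open/closed endpoints cause no trouble; taking a slice slightly below $t_2$ suffices, by continuity of $u$. Finally, the constant is adjusted by $2^{1+\alpha}$ and by the factor coming from the choice of evaluation point.
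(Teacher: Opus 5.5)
Your argument is correct. It is the standard comparison of the two linear expansions on a common backward cylinder, which is the ``simple computation'' the paper alludes to, deferring to \cite[Corollary~6.3]{KSC26}, rather than writing out. Two minor simplifications: the top slice $t=t_2$ already lies in both $Q_r(P_2)$ and $Q_{2r}(P_1)$, so no continuity argument is needed, and your choice $\rho=2r$ only requires $r\le r_0/2$.
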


It remains to show that the set of regular points in $\Gamma$, which we call $\mathcal{R}$, is relatively open and locally a $C^{1,\alpha}$ hypersurface.
These results will be a consequence of Theorem~\ref{thm:regularity}.

\begin{thm} 
     The set $\mathcal{R}$ is relatively open in $\Gamma$. Moreover, it is locally a $C^{1,\alpha}$ hypersurface.
\end{thm}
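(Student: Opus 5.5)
Fix $(x_0,t_0)\in\mathcal R$. By Lemma~\ref{lem:closeness} and Remark~\ref{rem:normal}, after translating, rescaling by some $\rho>0$ and dividing by $a>0$, the function $\tilde u:=a^{-1}u_\rho$ satisfies \eqref{eq:flatness} in $Q_1$ with $\delta\le\bar\delta_0$ from Theorem~\ref{thm:regularity}, and $(0,0)\in\Gamma$. The plan is to show two things. First, every free boundary point of $\tilde u$ in $Q_{1/2}$ is regular, which gives relative openness, since $\Gamma\cap Q_{\rho/2}(x_0,t_0)$ is then contained in $\mathcal R$. Second, $\Gamma\cap Q_{1/2}$ is a parabolic $C^{1,\alpha}$ graph in the direction $e$.

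\textbf{Openness.} Take $(y,s)\in\Gamma\cap Q_{1/2}$ for $\tilde u$. Theorem~\ref{thm:regularity} gives a vector $\nu_{y,s}$ with $|\nu_{y,s}-e|\le C\delta\le 1/2$, so $|\nu_{y,s}|\ge 1/2$, and
\[
\sup_{Q_r(y,s)}|\tilde u-\nu_{y,s}\cdot(x-y)|\le C\delta r^{1+\alpha}\quad\text{for all } r\le r_0.
\]
Hence
\[
\sup_{Q_r(y,s)}|\tilde u|\ge \tfrac12 r-C\delta r^{1+\alpha}\ge \tfrac14 r
\]
for all small $r$. This is exactly Definition~\ref{def:regpts}, so $(y,s)$ is a regular point. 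Undoing the scaling shows that a parabolic neighborhood of $(x_0,t_0)$ in $\Gamma$ consists of regular points.

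\textbf{Graph structure.} Rotate so that $e=e_n$. The expansion together with $|\nu_{y,s}-e_n|\le C\delta$ gives $\partial_n\tilde u\ge 1/4$ in the classical sense on $\Gamma\cap Q_{1/2}$. To get monotonicity along whole segments, not just at points of $\Gamma$, I would argue as follows. The flatness places $\Gamma\cap Q_{1/2}$ inside $\{|x_n|\le\delta\}$, and $\tilde u>0$ for $x_n>\delta$ and $\tilde u<0$ for $x_n<-\delta$. Fix $(x',t)$ and suppose the segment $\{(x',x_n,t): |x_n|\le\delta\}$ met $\Gamma$ at two heights. At each such point $(y,s)$, the expansion at scale $r\approx$ (distance between the heights) forces $\tilde u$ to change sign from negative to positive as $x_n$ increases, with error $C\delta r^{1+\alpha}\ll r/4$. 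This contradicts the presence of two distinct zeros of a function that is strictly increasing near each of them, at a scale comparable to their separation. The same argument excludes sets of zeros with positive length in between.

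So each such segment meets $\Gamma$ exactly once, at a height $x_n=g(x',t)$ with $|g|\le\delta$. Moreover $\{\tilde u>0\}=\{x_n>g\}$ and $\Gamma$ is the graph of $g$; the nodal region $\{\tilde u=0\}$ has empty interior by the expansion.

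\textbf{Regularity of $g$.} Write $P_i=(y_i,s_i)\in\Gamma$ with $y_i=(x'_i,g(x'_i,s_i))$, set $r=d_p(P_1,P_2)$, and write $\nu_i=\nu_{y_i,s_i}$. Evaluate the expansion at $P_1$ at the point $P_2$, where $\tilde u=0$:
\[
|\nu_1\cdot(y_2-y_1)|\le C\delta r^{1+\alpha}.
\]
Since $\nu_1\cdot e_n\ge 1/2$, this yields a Lipschitz bound: $|g(x_1',s_1)-g(x_2',s_2)|\le C\delta\,d_p$, with $d_p$ measured in $(x',t)$. Combined with the Hölder continuity of $\nu$ from Corollary~\ref{cor:normal_Holder}, the same relation shows that $g$ has a parabolic $C^{1,\alpha}$ expansion at each point, with spatial gradient $-\nu'/\nu_n$ and an $r^{1+\alpha}$ error in $|t-s|^{1/2}$. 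In the time direction this means $|g(x',t)-g(x',s)|\le C\delta|t-s|^{(1+\alpha)/2}$, which is precisely the time part of the parabolic $C^{1,\alpha}$ norm. Hence $\nabla_{x'}g\in C^{0,\alpha}$, and the free boundary is locally a parabolic $C^{1,\alpha}$ hypersurface.

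The main obstacle I expect is the graph step: ruling out several crossings, or a fat zero set, along a vertical segment. I would handle it by applying the expansion at the lowest and highest zeros on the segment, at the scale of their separation, where the sign structure gives a contradiction. The remaining steps are routine bookkeeping with Theorem~\ref{thm:regularity} and Corollary~\ref{cor:normal_Holder}.
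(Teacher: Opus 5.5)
Your proposal is correct and follows essentially the paper's argument: openness from the lower bound $|\nu|r - C\delta r^{1+\alpha}$; the graph property from the sign structure of the expansion at free boundary points (you use only the vertical direction plus an absorption argument, getting a Lipschitz bound with constant $C\delta$, where the paper uses a cone of directions); and the $C^{1,\alpha}$ bound by evaluating the expansion at one free boundary point at another, together with Corollary~\ref{cor:normal_Holder}. The only care needed is that the expansion of Theorem~\ref{thm:regularity} holds on backward cylinders $Q_r(y,s)$ and only at points of $\Gamma$, so it must be centered at the later of the two points, and in the uniqueness step at the highest zero on the segment, which is the one guaranteed to lie in $\Gamma$.
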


\begin{proof}
     Fix an arbitrary point in $\mathcal{R}$. After a translation, we may assume that the point is the origin. We will see that $\Gamma\cap Q_{\rho}\subseteq \mathcal{R}$ for a small $\rho > 0$, which implies that $\mathcal{R}$ is relatively open in $\Gamma$. 
     Indeed, since $(0,0)$ is a regular point, after a rescaling, $u$ satisfies the flatness condition \eqref{eq:flatness} with a small $\delta > 0$ to be chosen later. Hence, by Theorem~\ref{thm:regularity}, for any $(x_0,t_0)\in \Gamma\cap Q_{1/2}$, there is $\nu_{x_0,t_0} \in \Rn$  such that
$$
\sup_{Q_r(x_0,t_0)} |u- \nu_{x_0,t_0} \cdot (x-x_0)| \leq C \delta r^{1+\alpha}, \quad \hbox{ for all}~ r\leq  r_0.
$$ 
Take $y:=x_0+\frac{r}{2} \cdot\frac{\nu_{x_0,t_0}}{|\nu_{x_0,t_0}|}\in B_r(x_0)$. Then $(y,t_0)\in Q_r(x_0,t_0)$, and the above estimate implies
$$
|u(y,t_0)| \geq \frac{r}{2} |\nu_{x_0,t_0}| - C  r^{1+\alpha} \geq \frac{r}{4}- C  r^{1+\alpha}.
$$
Taking $\bar r\leq r_0$ small enough so that $C \bar r^{\alpha}\leq 1/8$, it follows that 
$$
\sup_{Q_{r}(x_0,t_0)} |u| \geq \frac{1}{8} r, \quad \text{ for all } r\leq \bar r.
$$
Hence, $(x_0,t_0)\in \mathcal{R}$, which implies that $\mathcal{R}$ is relatively open.\medskip

We now prove that $\mathcal R $ is locally a $C^{1,\alpha}$ hypersurface. 
Assume without loss of generality that $\nu_{0,0} = e_n$, rescale to replace $\bar r$ by $1$, and choose $\delta$ small enough so that, for all $(x_0,t_0) \in \Gamma\cap Q_1$, 
\begin{equation} \label{eq:closenormals}
    |\nu_{x_0,t_0}-e_n| \leq \frac{1}{3},
\end{equation}
and
\begin{equation} \label{eq:parexpansion}
\sup_{Q_r(x_0,t_0)} |u - \nu_{x_0,t_0} \cdot (x-x_0)| \leq \frac{1}{6} r^{1+\alpha}, \quad \hbox{ for all}~ r\leq 1.
\end{equation}
In particular, setting $r = 1$ and $(x_0,t_0)$ at the origin, we obtain that on $\{x_n = \frac{1}{2}\}\cap Q_1$, $|u - \frac{1}{2}| \leq \frac{1}{6}$, and thus, $u > 0$. Analogously, $u < 0$ on $\{x_n = -\frac{1}{2}\}\cap Q_1$. Hence, by continuity, for each $x' \in B'_{1/2}$ and $t \in (-\frac{1}{4},0)$, there exists $x_n^*(x',t) \in (-\frac{1}{2},\frac{1}{2})$ such that $$u(x',x_n^*(x',t),t) = 0.$$

We claim that this zero is unique and that $x_n^*(x',t)$ is Lipschitz (in the parabolic sense). Indeed, by \eqref{eq:closenormals},
for any  unit vector $\sigma \in \R^n$ such that $e_n\cdot\sigma \geq \frac{2}{3}$,
$$\nu_{x_0,t_0}\cdot\sigma = e_n\cdot \sigma +(\nu_{x_0,t_0}-e_n)\cdot \sigma \geq \frac{2}{3} - \frac{1}{3} \geq \frac{1}{3}.$$
Also, for any $s \in (0,1)$ such that $x_0+s\sigma \in B_1$, and any $\tau \in [0,\frac{1}{2})$ such that $t_0-s^2\tau > -1$, applying \eqref{eq:parexpansion} at the point $(x_0+s\sigma,t_0-s^2\tau)\in Q_s(x_0,t_0)$, 
$$u\left(x_0+s\sigma,t_0-s^2\tau\right) \geq s (\nu_{x_0,t_0}\cdot \sigma) - \frac{1}{6}s^{1+\alpha} \geq \frac{1}{3}s-\frac{1}{6}s^{1+\alpha} > 0.$$
Analogously, if $x_0-s\sigma \in B_1$, then 
$$u\left(x_0-s\sigma,t_0-s^2\tau\right) < 0.$$ 
Letting $s$, $\sigma$ and $\tau$ vary, we deduce that
\begin{equation}\label{eq:conecondition}
\Gamma\cap Q_1(x_0,t_0) \subset \left(x_0,t_0\right)+\mathcal{C},
\end{equation}
where
$$\mathcal{C} := \left\{(x,t) \in \R^n\times\R : \, t \leq 0, \ |x_n| \leq \tfrac{2}{\sqrt{5}}|x'| + \sqrt{2}|t|^{1/2}\right\}.$$

Let $D_{1/2} := B_{1/2}'\times(-1,1) \times (-\frac{1}{4},0)$. Combining the existence and cone condition \eqref{eq:conecondition}, we obtain that $\Gamma\cap D_{1/2}$ is the graph of a parabolic Lipschitz function, i.e.,
$$\Gamma\cap D_{1/2} = \left\{(x',x_n^*(x',t),t) : \, x' \in B'_{1/2}, t\in \left(-\tfrac{1}{4},0\right)\right\},$$
with 
$$|x_n^*(x_1',t_1) - x_n^*(x_2',t_2)| \leq \frac{2}{\sqrt{5}}|x_1'-x_2'| + \sqrt{2}|t_1-t_2|^{1/2}.$$

Finally, we prove that this graph is $C^{1,\alpha}$ in the parabolic sense. For any $(x_1,t_1), (x_2,t_2)$ in $\Gamma \cap D_{1/2}$, with $t_1 \leq t_2$, by the $C^{1,\alpha}$ estimate of $u$ at $(x_2,t_2)$, and since $u(x_1,t_1)=0$,
$$
|\nu_{x_2,t_2} \cdot (x_1-x_2)| \leq C\big(|x_1-x_2|+ |t_1-t_2|^{1/2}\big)^{1+\alpha}.
$$
Write
$
\nu_{x_2,t_2} :=n_{x_2,t_2}e_n+\vep_{x_2,t_2},
$
where $n_{x_2,t_2}\geq 2/3$ and $\vep_{x_2,t_2}\cdot e_n=0$. Since $x_{i,n}=x_n^*(x_i',t_i)$,
$$
\Big|n_{x_2,t_2}\big(x_n^*(x_1',t_1)-x_n^*(x_2',t_2)\big)+\varepsilon_{x_2,t_2}\cdot(x_1'-x_2',0) \Big|
\leq C\big(|x_1-x_2|+|t_1-t_2|^{1/2}\big)^{1+\alpha}.
$$
By the Lipschitz estimate for $x_n^*$,
$$
|x_1-x_2| \leq |x_1'-x_2'|+ |x_n^*(x_1',t_1)-x_n^*(x_2',t_2)|	\leq C\big (|x_1'-x_2'|+|t_1-t_2|^{1/2}\big).
$$
Dividing by $n_{x_2,t_2}\ge 2/3$, we conclude that
$$
\Big|x_n^*(x_1',t_1)-x_n^*(x_2',t_2) +\frac{\vep_{x_2,t_2}}{n_{x_2,t_2}} \cdot(x_1'-x_2',0) \Big| \leq C\big(|x_1'-x_2'|+|t_1-t_2|^{1/2}\big)^{1+\alpha}.
$$
In particular,
$$
\nabla_{x'} x_n^*(x_2',t_2) =-\frac{\vep_{x_2,t_2}'}{n_{x_2,t_2}}.
$$
By Corollary~\ref{cor:normal_Holder}, this gradient is $C^{0,\alpha}$. Therefore, $x_n^*\in C^{1,\alpha}$, in the parabolic sense, so $\mathcal R$ is locally a $C^{1,\alpha}$ hypersurface. 
\end{proof}

\appendix
\section{Flat transmission problems}
\label{a:flattrans}

In this section, we collect two lemmas regarding viscosity solutions of flat transmission problems that we use in the proof of the improvement of flatness (Theorem~\ref{thm:IF}).

\begin{lem}\label{lem:Pucci_implies_transmission}
Let $w$ be a viscosity solution of
    $$
    \begin{cases}
\partial_t w- F^-(D^2 w)  = 0 & \text{ in } Q_1^+,\\
\partial_t w-F^+(D^2 w)= 0 & \text{ in } Q_1^-,\\
\partial_t w- F^-(D^2 w)  \geq 0 & \text{ in } Q_1,\\
 \partial_t w - F^+(D^2 w)   \leq 0 & \text{ in } Q_1.
 \end{cases}
    $$
    Then, $w$ satisfies the transmission condition 
    $$\partial_{x_n}^+w = \partial_{x_n}^-w \quad \text{ on  } \{x_n = 0\},$$
    in the viscosity sense.
    In other words, for $P$ a polynomial, quadratic in $x'$ and affine in $t$, if
    $$\varphi = P(x',t) + px_n^+ - qx_n^-,$$ 
     touches $w$ at $(x_0,t_0) \in \{x_n = 0\}\cap Q_1$ from above (resp.~below), then 
    $$p \geq q \qquad (resp.~p \leq q).$$
\end{lem}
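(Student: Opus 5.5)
We argue by contradiction, using the global viscosity inequalities as the only source of information across $\{x_n=0\}$. Consider the case where $\varphi = P(x',t) + p x_n^+ - q x_n^-$ touches $w$ from above at $(x_0,t_0)\in\{x_n=0\}\cap Q_1$ and suppose $p<q$. We want to contradict the global subsolution inequality $\partial_t w - F^+(D^2 w)\le 0$. The case of touching from below with $p>q$ is symmetric: there we contradict $\partial_t w - F^-(D^2 w)\ge 0$.

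\textbf{Step 1: build a $C^2$ test function.} When $p<q$, the function $x_n\mapsto p x_n^+ - q x_n^-$ has a concave kink at $0$. Set $m:=(p+q)/2$, so that
$$
p x_n^+ - q x_n^- = m x_n - \tfrac{q-p}{2}|x_n|.
$$
Since $-|x_n|\le -x_n^2/\eta$ for $|x_n|\le\eta$, we get
$$
p x_n^+ - q x_n^- \le m x_n - \tfrac{q-p}{2\eta}x_n^2 \quad\text{for } |x_n|\le\eta,
$$
with equality at $x_n=0$. Then
$$
\tilde\varphi := P(x',t) + m x_n - \tfrac{q-p}{2\eta}x_n^2 + |x'-x_0'|^2 + (t_0-t)
$$
is a $C^2$ function. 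It satisfies $\tilde\varphi\ge \varphi\ge w$ in a small cylinder $Q_\eta(x_0,t_0)$, and $\tilde\varphi(x_0,t_0)=w(x_0,t_0)$. The extra terms $|x'-x_0'|^2$ and $(t_0-t)$ are nonnegative on the past cylinder, so they only help; they can be dropped or kept. Hence $\tilde\varphi$ touches $w$ from above at $(x_0,t_0)$.

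\textbf{Step 2: apply the inequality and conclude.} The global inequality gives
$$
\partial_t\tilde\varphi(x_0,t_0) - F^+(D^2\tilde\varphi(x_0,t_0))\le 0 .
$$
Here $\partial_t\tilde\varphi(x_0,t_0)$ equals $\partial_t P(x_0',t_0)$ up to a bounded constant independent of $\eta$. The Hessian is $D^2\tilde\varphi = \mathrm{diag}(D^2_{x'}P + 2I',\, -(q-p)/\eta)$ plus zero off-diagonal terms in the $x_n$ row. By the ellipticity bound \eqref{eq:ordering},
$$
F^+(D^2\tilde\varphi)\le \M^+(D^2\tilde\varphi)\le C\big(|D^2_{x'}P|+1\big) - \lambda\,\frac{q-p}{\eta}.
$$
Therefore
$$
\partial_t P(x_0',t_0) - C + \lambda\frac{q-p}{\eta}\le 0,
$$
which is false once $\eta$ is small enough. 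This contradiction gives $p\ge q$, and the from-below case gives $p\le q$ in the same way.

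\textbf{Expected obstacle.} The only delicate point is the comparison $\tilde\varphi\ge\varphi$ near $(x_0,t_0)$: the quadratic majorant of $-|x_n|$ must hold in a genuine neighborhood, which is why it is restricted to $|x_n|\le\eta$. The cylinder in the touching definition can be shrunk to radius $\eta$ without loss. Interior equations play no role; only the global inequalities and uniform ellipticity are used.
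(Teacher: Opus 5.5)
Your proof is correct and is essentially the paper's argument: write the kink as $\frac{p+q}{2}x_n-\frac{q-p}{2}|x_n|$, dominate $-|x_n|$ by $-x_n^2/\eta$ on a shrunken touching cylinder, and use uniform ellipticity to make $\partial_t\tilde\varphi-F^+(D^2\tilde\varphi)>0$ for small $\eta$, which contradicts the global subsolution inequality. The extra terms $|x'-x_0'|^2+(t_0-t)$ are harmless but unnecessary, since the paper uses the quadratic majorant alone.
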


\begin{proof}
    We proceed by contradiction. Assume without loss of generality that $(x_0,t_0) = (0,0)$, $\varphi(0,0) = w(0,0)$, and $\varphi \geq w$ in $Q_\delta$ for some $\delta > 0$, but $p < q$. Now,
    \begin{align*}
        w(x,t) &\leq \varphi(x,t) = P(x',t) + px_n^+ - qx_n^- = P(x',t) + \frac{p+q}{2}x_n - \frac{q-p}{2}|x_n|\\
        &\leq P(x',t) + \frac{p+q}{2}x_n - \frac{q-p}{2\delta}x_n^2 =: \psi(x,t).
    \end{align*}
    Note that $\psi$ touches $w$ from above at $(0,0)$ as well, and that
    $$\partial_t\psi - F^+(D^2\psi) \geq -C + \frac{\lambda(q-p)}{\delta} > 0,$$
    choosing $\delta$ smaller if needed, contradicting that $\partial_t w - F^+(D^2w) \leq 0$, in the viscosity sense.
\end{proof}

Combining Lemma~\ref{lem:Pucci_implies_transmission} with \cite[Proposition 4.20]{JSC26}, we obtain the following interior regularity estimate.

\begin{lem}\label{lem:blowup_C1_alpha}
    Let $w$ be a bounded viscosity solution of
    $$
    \begin{cases}
\partial_t w- F^-(D^2 w)  = 0 & \text{ in } Q_1^+,\\
\partial_t w-F^+(D^2 w)= 0 & \text{ in } Q_1^-,\\
\partial_t w- F^-(D^2 w)  \geq 0 & \text{ in } Q_1,\\
 \partial_t w - F^+(D^2 w)   \leq 0 & \text{ in } Q_1.
 \end{cases}
    $$
    Then,
    $$\|w\|_{C^{1,\alpha}(Q_{1/2})} \leq C_0\|w\|_{L^\infty(Q_1)},$$
    where $\alpha$ and $C_0$ are positive constants depending only on $n$, $\lambda$ and $\Lambda$.
\end{lem}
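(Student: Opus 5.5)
The estimate follows by combining Lemma~\ref{lem:Pucci_implies_transmission} with the regularity theory for flat parabolic transmission problems (\cite[Proposition 4.20]{JSC26}). By Lemma~\ref{lem:Pucci_implies_transmission}, $w$ solves, in the viscosity sense, a two-phase problem. In $Q_1^+$ it satisfies the concave-free uniformly parabolic equation $\partial_t w = F^-(D^2 w)$, and in $Q_1^-$ it satisfies $\partial_t w = F^+(D^2w)$. Across the flat interface $\{x_n=0\}$ it satisfies the transmission condition $\partial_{x_n}^+ w = \partial_{x_n}^- w$, tested against functions of the form $P(x',t)+px_n^+-qx_n^-$. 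Since both $F^\pm$ are uniformly elliptic with constants $\lambda,\Lambda$ and $1$-homogeneous, this is exactly the setting of a fully nonlinear transmission problem with flat interface and zero jump. So the first step is simply to check that the hypotheses of \cite[Proposition 4.20]{JSC26} are met: operators in the class $(i)$--$(ii)$, a flat interface, and the viscosity transmission condition with these test functions. That proposition then gives the one-sided $C^{1,\alpha}$ estimates up to $\{x_n=0\}$, together with the matching of normal derivatives.

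If instead I had to argue from scratch, I would proceed as follows. First, I would obtain tangential regularity. Since the problem is invariant under translations in $x'$ and $t$, I would apply the Pucci-class H\"older estimates to incremental quotients. The global inequalities $F^-(D^2w)\le\partial_t w\le F^+(D^2w)$ only give that these quotients lie in $S(0)$ after using $(i)$, and iterating yields Lipschitz bounds in $x'$ and $C^{1/2}$ bounds in $t$ (the standard Caffarelli iteration). Next, I would obtain $C^{1,\alpha}$ regularity in $x_n$ on each side. Here I would use the boundary Harnack / Krylov-type estimate for $w$ minus its tangential part, compare with the barriers $px_n^+-qx_n^-$, and use the transmission condition to force $p=q$ at each scale. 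This gives an improvement-of-flatness iteration in which $w$ is approximated in $Q_{\rho^k}$ by a linear function $\ell_k(x)=a_k+b_k\cdot x$ with error $C\rho^{k(1+\alpha)}\|w\|_{L^\infty}$. The key compactness step is a Liouville theorem: global solutions of the flat transmission problem with linear growth are linear. I would prove this by a blow-down, using the tangential estimates to show that such a solution depends only on $x_n$ and is time independent. In that case it is piecewise linear, so the transmission condition makes it linear.

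The main obstacle is the normal-direction regularity across the interface. The two operators differ there, so one cannot differentiate in $x_n$, and the transmission condition only holds in the weak viscosity sense. The Liouville classification via tangential regularity plus one-dimensional reduction is where I expect the real work to lie. Once the pointwise $C^{1,\alpha}$ expansions are established at every point of $Q_{1/2}$ (interior points of $Q_{1/2}^\pm$ by classical regularity, interface points by the iteration), I would patch them in the standard way. This gives $\|w\|_{C^{1,\alpha}(Q_{1/2})}\le C_0\|w\|_{L^\infty(Q_1)}$, with the dependence only on $n,\lambda,\Lambda$ by homogeneity and scaling.
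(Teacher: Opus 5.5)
Your first paragraph is exactly the paper's proof: Lemma~\ref{lem:Pucci_implies_transmission} turns the global inequalities into the viscosity transmission condition $\partial_{x_n}^+w=\partial_{x_n}^-w$, and the estimate is then quoted from \cite[Proposition 4.20]{JSC26}. One caveat on your optional from-scratch sketch: the global inequalities together with $(i)$ do not put the tangential differences $w(x+he',t)-w(x,t)$ in $S(0)$, because $F^+(D^2w(\cdot+he',\cdot))-F^-(D^2w)$ is not controlled by $\M^+$ of the Hessian of the difference; you would need that both functions solve the same transmission problem, plus a difference-of-solutions argument across $\{x_n=0\}$, which is the genuinely nontrivial step the citation lets you avoid.
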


\section{Global solutions to the heat equation}\label{a:heateqnsol}

In this section, we state a simple lemma needed in the proof of Theorem~\ref{thm:uniformest} regarding the behavior of global solutions to the heat equation.
For $z>0$, we recall the $\Gamma$-function
$$
\Gamma(z) := \int_0^\infty r^{z-1} e^{-r}\, dr.
$$

\begin{lem}\label{lem:globalsol}
Let $\Phi$ be the function given by
$$
\Phi(x,t)=   \frac{1}{(4\pi t)^{n/2}} \int_{\Rn} e^{-\tfrac{|y-x|^2}{4t}} |y| \, dy, \quad \text{ for } (x,t)\in \Rn \times (0,\infty).
$$
 Then $\Phi$ satisfies $\Phi_t - \Delta \Phi =0$ in $\Rn\times (0,\infty)$, and $\Phi(x,0)=|x|$ for $x\in \Rn$.
Moreover,
    $$
    \Phi(0,t)= 2 \frac{\Gamma\left(\frac{n+1}{2}\right)}{\Gamma\left(\frac n2\right)}t^{1/2}, \qquad \text{for all } t>0.
    $$
\end{lem}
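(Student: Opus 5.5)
We have to prove Lemma~\ref{lem:globalsol}: that $\Phi$ solves the heat equation with initial datum $|x|$, and that we have the explicit formula for $\Phi(0,t)$.

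For the first two assertions we use that $\Phi = G(\cdot,t) * |\cdot|$, where $G$ is the heat kernel, together with the fact that $|y|$ grows only linearly. For $t$ in a compact subset of $(0,\infty)$ and $x$ in a compact set, the kernel derivatives $\partial_t G$ and $D^2_x G$ evaluated at $y-x$ are bounded by $C(1+|y|^2)t^{-k}e^{-|y-x|^2/(8t)}$. This is integrable against $|y|$, so we may differentiate under the integral sign. Since $\partial_t G - \Delta_x G = 0$, it follows that $\Phi_t - \Delta\Phi = 0$ in $\Rn\times(0,\infty)$. For the initial datum, the standard approximate-identity argument applies. Write
$$\Phi(x,t)-|x| = \int_{\Rn} G(z,t)\big(|x+z|-|x|\big)\,dz,$$
and use $\big||x+z|-|x|\big|\le |z|$. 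This gives
$$|\Phi(x,t)-|x||\le \int_{\Rn} G(z,t)|z|\,dz = c_n t^{1/2}\to 0,$$
uniformly in $x$. So $\Phi$ extends continuously to $t=0$ with $\Phi(x,0)=|x|$.

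For the explicit value, set $x=0$ and pass to polar coordinates:
$$\Phi(0,t)=\frac{|\mathbb S^{n-1}|}{(4\pi t)^{n/2}}\int_0^\infty r^{n}e^{-r^2/(4t)}\,dr.$$
Substitute $s=r^2/(4t)$, so that $r=2\sqrt{ts}$ and $dr=\sqrt{t/s}\,ds$. The integral becomes
$$\tfrac12(4t)^{(n+1)/2}\,\Gamma\!\left(\tfrac{n+1}{2}\right).$$
Now insert $|\mathbb S^{n-1}|=2\pi^{n/2}/\Gamma(n/2)$. The factors $\pi^{n/2}$ cancel, and the powers of $4t$ combine as
$$(4t)^{(n+1)/2-n/2}=2t^{1/2},$$
which yields
$$\Phi(0,t)=2\,\frac{\Gamma\left(\frac{n+1}{2}\right)}{\Gamma\left(\frac n2\right)}\,t^{1/2}.$$

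None of this is difficult. The only point requiring care is justifying differentiation under the integral sign, and that follows from the Gaussian decay of the kernel derivatives against the linear growth of $|y|$.
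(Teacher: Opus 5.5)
Your proof is correct and follows the same route as the paper. The heat equation and the initial datum come from writing $\Phi$ as the convolution of the heat kernel with $|x|$. The value $\Phi(0,t)$ is computed in polar coordinates using $|\mathbb{S}^{n-1}|=2\pi^{n/2}/\Gamma(\frac n2)$ and $\int_0^\infty e^{-r^2/(4t)}r^n\,dr = 2^n t^{(n+1)/2}\Gamma(\frac{n+1}{2})$, which is exactly your $\tfrac12(4t)^{(n+1)/2}\Gamma(\tfrac{n+1}{2})$.

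Your extra justifications are sound and fill in details the paper leaves implicit: differentiating under the integral sign, and the uniform bound $\bigl|\Phi(x,t)-|x|\bigr|\le \Phi(0,t)=c_n t^{1/2}$ for the initial datum.
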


\begin{proof}
By definition, $\Phi$ is the convolution of the heat kernel and $|x|$. Hence, $\Phi$ satisfies $\Phi_t - \Delta \Phi=0$ in $\Rn \times (0,\infty)$. Taking $t\to 0$, we also have $\Phi(x,0)=|x|$ for $x\in \Rn$.

Moreover, for any $t>0$, using polar coordinates, 
$$
\Phi(0,t) = \frac{1}{(4\pi t)^{n/2}} \int_{\Rn} e^{-\frac{|y|^2}{4t}} |y|\,dy
  = \frac{|\mathbb{S}^{n-1}|}{(4\pi t)^{n/2}} \int_0^\infty e^{-\frac{r^2}{4t}} r^n\,dr.
$$
We have that 
$$
\int_0^\infty
e^{-\frac{r^2}{4t}} r^n\,dr
=
2^n t^{\frac{n+1}{2}}
\Gamma\left(\tfrac{n+1}{2}\right) \quad \text{ and } \quad |\mathbb{S}^{n-1}| = \frac{2\pi^{n/2}}{\Gamma\left(\frac n2\right)}.
$$
Therefore,
$$
\Phi(0,t) =2 \frac{\Gamma\left(\frac{n+1}{2}\right)}{\Gamma\left(\frac n2\right)} t^{1/2}.
$$
\end{proof}

\begin{rem} \label{rem:globalsol}
    For any $A,B, t_0\in \R$ and $x_0\in \Rn$, taking 
    $$\tilde \Phi(x,t)=A+B\Phi(x-x_0,t-t_0), \quad \text{ for } (x,t)\in \Rn\times (t_0,\infty),$$ 
    by linearity and translation invariance, it follows that $\tilde\Phi$ is a solution to the heat equation in $\R^n\times (t_0,\infty)$,
    with $\tilde \Phi(x,t_0)= A + B |x-x_0|$, such that
    $$
    \tilde \Phi(x_0,t)= A + 2B \frac{\Gamma\left(\frac{n+1}{2}\right)}{\Gamma\left(\frac n2\right)} (t-t_0)^{1/2}, \qquad \text{for all } t>t_0.
    $$
\end{rem}

\bigskip
\subsection*{Acknowledgements}
We are deeply grateful to Stefania Patrizi and Monica Torres for proposing this project during the ``Research Collaboration Workshop for Women in Analysis,'' held at the Banff International Research Station (BIRS) in Canada from May 11 to 16, 2025. We would like to specifically acknowledge the organizers, Donatella Danielli and Irina Mitrea and the third-named author. We are very thankful for being able to participate in such a wonderful workshop and that such an opportunity to foster research collaboration and networking among women in analysis exists. We also thank BIRS for its generous hospitality.

\subsection*{Funding}

E.C. was partially supported by NSF DMS 1954545, NSF DMS 2602489, the AMS-Simons Travel Grant, and by the Simons Initiative on the Geometry of Flows (Grant Award ID BD-Targeted-00017375-ME).

M.S.C. acknowledges financial support from the grants JDC2023-050724-I and PID2023-150166NB-I00 
(MCIN/AEI/10.13039/501100011033), funded by the Ministry of Science and Innovation (Spain), through the Agencia Estatal de Investigaci\'on (AEI).

C.T.L. has received funding from the European Research Council (ERC) under the Grant Agreement No. 862342, from AEI project PID2024-156429NB-I00 (Spain), and from the Grant CEX2023-001347-S funded by MICIU/AEI/10.13039/501100011033 (Spain).

\bibliographystyle{alpha}
\bibdata{references}
\bibliography{references}

@article{CIL92,
  author = {Crandall, Michael G. and Ishii, Hitoshi and Lions, Pierre-Louis},
  title = {User's guide to viscosity solutions of second order partial differential equations},
  journal = {Bulletin of the American Mathematical Society},
  volume = {27},
  number = {1},
  pages = {1--67},
  year = {1992},
  doi = {10.1090/S0273-0979-1992-00266-5},
}

@article{CKS00,
  author = {Crandall, Michael G. and Kocan, Maciej and {\'S}wiech, Andrzej},
  title = {{$L^p$}-theory for fully nonlinear uniformly parabolic equations: Parabolic equations},
  journal = {Communications in Partial Differential Equations},
  volume = {25},
  number = {11--12},
  pages = {1997--2053},
  year = {2000},
  publisher = {Taylor \& Francis},
}

@article{CPQT19,
  author = {Caffarelli, Luis A. and Patrizi, Stefania and Quitalo, Veronica and Torres, Monica},
  title = {Regularity of interfaces for a {Pucci} type segregation problem},
  journal = {Annales de l'Institut Henri Poincar{\'e} C, Analyse Non Lin{\'e}aire},
  volume = {36},
  number = {4},
  pages = {939--975},
  year = {2019},
  doi = {10.1016/j.anihpc.2018.11.002},
}

@book{CC95,
  author = {Caffarelli, Luis A. and Cabr{\'e}, Xavier},
  title = {Fully nonlinear elliptic equations},
  series = {American Mathematical Society Colloquium Publications},
  volume = {43},
  year = {1995},
  publisher = {American Mathematical Society},
  address = {Providence, RI},
  isbn = {0-8218-0437-5},
}

@article{DS11,
  author = {De Silva, Daniela},
  title = {Free boundary regularity for a problem with right hand side},
  journal = {Interfaces and Free Boundaries},
  volume = {13},
  number = {2},
  pages = {223--238},
  year = {2011},
  doi = {10.4171/IFB/255},
}

@article{IL90,
  author = {Ishii, Hitoshi and Lions, Pierre-Louis},
  title = {Viscosity solutions of fully nonlinear second-order elliptic partial differential equations},
  journal = {Journal of Differential Equations},
  volume = {83},
  number = {1},
  pages = {26--78},
  year = {1990},
  doi = {10.1016/0022-0396(90)90068-Z},
}

@article{WI92,
  author = {Wang, Lihe},
  title = {On the regularity theory of fully nonlinear parabolic equations. {I}},
  journal = {Communications on Pure and Applied Mathematics},
  volume = {45},
  number = {1},
  pages = {27--76},
  year = {1992},
  doi = {10.1002/cpa.3160450103},
}

@article{KSC26,
  author = {Kriventsov, Dennis and Soria-Carro, Mar{\'i}a},
  title = {A parabolic free transmission problem: flat free boundaries are smooth},
  journal = {Calculus of Variations and Partial Differential Equations},
  volume = {65},
  number = {1},
  pages={35 pp.},
  eid = {35},
  year = {2026},
  doi = {10.1007/s00526-025-03188-y},
}

@article{WII92,
  author = {Wang, Lihe},
  title = {On the regularity theory of fully nonlinear parabolic equations. {II}},
  journal = {Communications on Pure and Applied Mathematics},
  volume = {45},
  number = {2},
  pages = {141--178},
  year = {1992},
  doi = {10.1002/cpa.3160450202},
}

@book{CS05,
  author = {Caffarelli, Luis A. and Salsa, Sandro},
  title = {A geometric approach to free boundary problems},
  series = {Graduate Studies in Mathematics},
  volume = {68},
  year = {2005},
  publisher = {American Mathematical Society},
}

@article{AS16,
  author = {Allen, Mark and Shi, Wenhui},
  title = {The two-phase parabolic {S}ignorini problem},
  journal = {Indiana University Mathematics Journal},
  volume = {65},
  number = {2},
  pages = {727--741},
  year = {2016},
  publisher = {JSTOR},
  doi = {10.1512/iumj.2016.65.5824},
}

@article{JSC26,
  author = {Jesus, David and Soria-Carro, Mar{\'i}a},
  title = {Fully nonlinear parabolic fixed transmission problems.},
  journal = {Nonlinear Analysis},
  volume = {264},
  pages = {Paper No. 114004, 26 pp.},
  year = {2026},
}

@article{CL08,
  author = {Caffarelli, Luis A. and Lin, Fang-Hua},
  title = {Singularly perturbed elliptic systems and multi-valued harmonic functions with free boundaries},
  journal = {Journal of the American Mathematical Society},
  volume = {21},
  number = {3},
  pages = {847--862},
  year = {2008},
}

@article{CKL09,
  author = {Caffarelli, Luis A. and Karakhanyan, Aram L. and Lin, Fang-Hua},
  title = {The geometry of solutions to a segregation problem for nondivergence systems},
  journal = {Journal of Fixed Point Theory and Applications},
  volume = {5},
  number = {2},
  pages = {319--351},
  year = {2009},
  doi = {10.1007/s11784-009-0110-0},
}

@article{Q13,
  author = {Quitalo, Veronica},
  title = {A free boundary problem arising from segregation of populations with high competition},
  journal = {Archive for Rational Mechanics and Analysis},
  volume = {210},
  number = {3},
  pages = {857--908},
  year = {2013},
}

@article{CDHMN04,
  author = {Crooks, Elaine C. M. and Dancer, Edward N. and Hilhorst, Danielle and Mimura, Masayasu and Ninomiya, Hirokazu},
  title = {Spatial segregation limit of a competition-diffusion system with {D}irichlet boundary conditions},
  journal = {Nonlinear Analysis: Real World Applications},
  volume = {5},
  pages = {645--665},
  year = {2004},
}

@article{DD03,
  author = {Dancer, Edward N. and Du, Yi Hong},
  title = {On a free boundary problem arising from population biology},
  journal = {Indiana University Mathematics Journal},
  volume = {52},
  number = {1},
  pages = {51--67},
  year = {2003},
}

@article{DD94,
  author = {Dancer, Edward N. and Du, Yi Hong},
  title = {Competing species equations with diffusion, large interactions, and jumping nonlinearities},
  journal = {Journal of Differential Equations},
  volume = {114},
  pages = {434--475},
  year = {1994},
}

@article{DD95II,
  author = {Dancer, Edward N. and Du, Yi Hong},
  title = {Positive solutions for a three-species competition system with diffusion. {II.} {T}he case of equal birth rates},
  journal = {Nonlinear Analysis},
  volume = {24},
  number = {3},
  pages = {359--373},
  year = {1995},
}

@article{DD95I,
  author = {Dancer, Edward N. and Du, Yi Hong},
  title = {Positive solutions for a three-species competition system with diffusion. {I.} {G}eneral existence results},
  journal = {Nonlinear Analysis},
  volume = {24},
  number = {3},
  pages = {337--357},
  year = {1995},
}

@misc{GS25,
  author = {Goering, Max and Skorobogatova, Anna},
  title = {On the geometric properties of multi-operator two-phase elliptic measure},
  year = {2025},
  eprint = {2509.04189},
  archivePrefix = {arXiv},
  note = {arXiv preprint},
}

@article{CTV05III,
  author = {Conti, Monica and Terracini, Susanna and Verzini, Gianmaria},
  title = {Asymptotic estimates for the spatial segregation of competitive systems},
  journal = {Advances in Mathematics},
  volume = {195},
  number = {2},
  pages = {524--560},
  year = {2005},
}

@article{CTV05II,
  author = {Conti, Monica and Terracini, Susanna and Verzini, Gianmaria},
  title = {A variational problem for the spatial segregation of reaction-diffusion systems},
  journal = {Indiana University Mathematics Journal},
  volume = {54},
  number = {3},
  pages = {779--815},
  year = {2005},
}

@article{CTV05I,
  author = {Conti, Monica and Terracini, Susanna and Verzini, Gianmaria},
  title = {Nehari's problem and competing species systems},
  journal = {Annales de l'Institut Henri Poincar{\'e} C, Analyse Non Lin{\'e}aire},
  volume = {19},
  number = {6},
  pages = {871--888},
  year = {2002},
  doi = {10.1016/S0294-1449(02)00104-X},
}

@article{OV26,
  author = {Ognibene, Roberto and Velichkov, Bozhidar},
  title = {A survey on the optimal partition problem},
  journal = {La Matematica},
  volume = {5},
  number = {1},
  eid = {6},
  year = {2026},
  doi = {10.1007/s44007-025-00176-8},
}

@article{DHMP99,
  author = {Dancer, Edward N. and Hilhorst, Danielle and Mimura, Masayasu and Peletier, Lambertus A.},
  title = {Spatial segregation limit of a competition-diffusion system},
  journal = {European Journal of Applied Mathematics},
  volume = {10},
  number = {2},
  pages = {97--115},
  year = {1999},
}

@article{DWZ11,
  author = {Dancer, Edward N. and Wang, Kelei and Zhang, Zhitao},
  title = {Uniform {H}\"{o}lder estimate for singularly perturbed parabolic systems of {B}ose-{E}instein condensates and competing species},
  journal = {Journal of Differential Equations},
  volume = {251},
  number = {10},
  pages = {2737--2769},
  year = {2011},
}

@article{DWZ12,
  author = {Dancer, Edward N. and Wang, Kelei and Zhang, Zhitao},
  title = {The limit equation for the {G}ross--{P}itaevskii equations and {S.} {T}erracini's conjecture},
  journal = {Journal of Functional Analysis},
  volume = {262},
  number = {3},
  pages = {1087--1131},
  year = {2012},
}

@article{AST21,
  author = {Audrito, Alessandro and Serra, Enrico and Tilli, Paolo},
  title = {A minimization procedure to the existence of segregated solutions to parabolic reaction-diffusion systems},
  journal = {Communications in Partial Differential Equations},
  volume = {46},
  number = {12},
  pages = {2268--2287},
  year = {2021},
  doi = {10.1080/03605302.2021.1931884},
}

@article{STVZ25,
  author = {Salort, Ariel and Terracini, Susanna and Verzini, Gianmaria and Zilio, Alessandro},
  title = {Rotating spirals in segregated reaction-diffusion systems},
  journal = {Analysis \& PDE},
  volume = {18},
  number = {3},
  pages = {549--590},
  year = {2025},
}

@article{CPQ17,
  author = {Caffarelli, Luis A. and Patrizi, Stefania and Quitalo, Veronica},
  title = {On a long range segregation model},
  journal = {Journal of the European Mathematical Society},
  volume = {19},
  number = {12},
  pages = {3575--3628},
  year = {2017},
}

@article{CPT26,
  author = {Chuah, Howen and Patrizi, Stefania and Torres, Monica},
  title = {A nonlinear model for long-range segregation},
  journal = {La Matematica},
  volume = {5},
  number = {1},
  eid = {20},
  year = {2026},
  doi = {10.1007/s44007-026-00198-w},
}

@article{H07,
  author = {Han, Qing},
  title = {Nodal Sets of Harmonic Functions},
  journal = {Pure and Applied Mathematics Quarterly},
  volume = {3},
  number = {3},
  pages = {647--688},
  year = {2007},
  doi = {10.4310/PAMQ.2007.v3.n3.a2},
}

@article{C87,
  author = {Caffarelli, Luis A.},
  title = {A {H}arnack inequality approach to the regularity of free boundaries. {Part I}: Lipschitz free boundaries are {$C^{1,\alpha}$}},
  journal = {Revista Matem{\'a}tica Iberoamericana},
  volume = {3},
  number = {2},
  pages = {139--162},
  year = {1987},
  doi = {10.4171/RMI/47},
}

@article{C89,
  author = {Caffarelli, Luis A.},
  title = {A {H}arnack inequality approach to the regularity of free boundaries. {Part II}: {F}lat free boundaries are {L}ipschitz},
  journal = {Communications on Pure and Applied Mathematics},
  volume = {42},
  number = {1},
  pages = {55--78},
  year = {1989},
  doi = {10.1002/cpa.3160420105},
}

@article{PS23,
  author = {Pimentel, Edgard A. and Santos, Makson S.},
  title = {Fully nonlinear free transmission problems},
  journal = {Interfaces and Free Boundaries},
  volume = {25},
  number = {3},
  pages = {325--342},
  year = {2023},
  doi = {10.4171/IFB/489},
}

@article{PSw22,
  author = {Pimentel, Edgard A. and {\'S}wi{\v{e}}ch, Andrzej},
  title = {Existence of solutions to a fully nonlinear free transmission problem},
  journal = {Journal of Differential Equations},
  volume = {320},
  pages = {49--63},
  year = {2022},
  doi = {10.1016/j.jde.2022.02.057},
}

@article{ST23,
  author = {Soave, Nicola and Terracini, Susanna},
  title = {An anisotropic monotonicity formula, with applications to some segregation problems},
  journal = {Journal of the European Mathematical Society},
  volume = {25},
  number = {9},
  pages = {3727--3765},
  year = {2023},
}

@article{AT15,
  author = {Amaral, Marcelo and Teixeira, Eduardo V.},
  title = {Free transmission problems},
  journal = {Communications in Mathematical Physics},
  volume = {337},
  number = {3},
  pages = {1465--1489},
  year = {2015},
}

@article{AM13,
  author = {Andersson, John and Mikayelyan, Hayk},
  title = {The zero level set for a certain weak solution, with applications to the {Bellman} equations},
  journal = {Transactions of the American Mathematical Society},
  volume = {365},
  number = {5},
  pages = {2297--2316},
  year = {2013},
}

@article{CDSS18,
  author = {Caffarelli, Luis A. and De Silva, Daniela and Savin, Ovidiu},
  title = {Two-phase anisotropic free boundary problems and applications to the {Bellman} equation in {2D}},
  journal = {Archive for Rational Mechanics and Analysis},
  volume = {228},
  number = {2},
  pages = {477--493},
  year = {2018},
}

@article{KLS17,
  author = {Kim, Sunghan and Lee, Ki-Ahm and Shahgholian, Henrik},
  title = {An elliptic free boundary arising from the jump of conductivity},
  journal = {Nonlinear Analysis},
  volume = {161},
  pages = {1--29},
  year = {2017},
}

@article{BDS25,
  author = {Beck, Thomas and De Silva, Daniela and Savin, Ovidiu},
  title = {Two-phase problems: {P}erron solutions and regularity of the {N}eumann problem in convex cones},
  journal = {International Mathematics Research Notices},
  volume = {2025},
  number = {11},
  eid = {rnaf152},
  year = {2025},
  doi = {10.1093/imrn/rnaf152},
}

@article{DSS19,
  author = {De Silva, Daniela and Savin, Ovidiu},
  title = {Global solutions to nonlinear two-phase free boundary problems},
  journal = {Communications on Pure and Applied Mathematics},
  volume = {72},
  number = {10},
  pages = {2031--2062},
  year = {2019},
  doi = {10.1002/cpa.21811},
}

\end{document}